\newcommand\pf{\begin{proof}}
\newcommand\epf{\end{proof}}
\newcommand\Alg{\mathcal{A}lg}
\newcommand\Gal{\operatorname{Gal}}
\newcommand\Hom{\operatorname{Hom}}
\newcommand\Spec{\operatorname{Spec}}
\newcommand\id{{\operatorname{id}}}
\renewcommand\H{\mathrm{H}}
\renewcommand\AA{\mathcal A}
\newcommand\OO{\mathcal O}
\newcommand\eps{\varepsilon}
\newcommand{\MG}{{}^G\mathcal{M}}
\newcommand{\MGb}{{}^G\mathcal{M}_b}
\newcommand{\bt}{\boldsymbol{\otimes}^b}
\renewcommand{\d}{\textnormal{d}}
\newcommand{\Z}{\mathbb{Z}}
\newcommand{\C}{\mathbb{C}}
\newcommand{\R}{\mathbb{R}}
\newcommand{\slt}{\mathfrak{sl}_2}
\newcommand{\B}{\mathcal{B}}
\newcommand{\U}{\mathcal{U}}
\newtheoremstyle{pedro}{}{}{\itshape}{}{\sc}{~--}{ }{\thmname{#1}\thmnumber{ #2}\thmnote{ (#3)}}
\newtheoremstyle{pedroex}{}{}{}{}{\sc}{~--}{ }{\thmname{#1}\thmnumber{ #2}\thmnote{ (#3)}}
\theoremstyle{pedro}
\newtheorem{lem}{Lemma}[section]
\newtheorem{thm}[lem]{Theorem}
\newtheorem{prop}[lem]{Proposition}
\newtheorem{coro}[lem]{Corollary}
\theoremstyle{remark}
\theoremstyle{pedroex}
\newtheorem{ex}[lem]{Example}
\newtheorem{Rem}[lem]{Remark}
\numberwithin{equation}{section}
\title[Twisting algebras -- explicit computations]
{Twisting algebras using non-commutative torsors -- Explicit computations}
\author{Pierre Guillot, Christian Kassel and Akira Masuoka}
\address{
(P.G.)
Universit\'{e} de Strasbourg \& CNRS,
Institut de Recherche Math\'{e}matique Avan\-c\'{e}e,
7~Rue Ren\'{e} Descartes,
67084 Strasbourg Cedex, France}
\email{guillot@math.unistra.fr}
\urladdr{www-irma.u-strasbg.fr/\raise-2pt\hbox{\~{}}guillot/}
\address{
(C.K.)
Universit\'{e} de Strasbourg \& CNRS,
Institut de Recherche Math\'{e}matique Avan\-c\'{e}e,
7~Rue Ren\'{e} Descartes,
67084 Strasbourg Cedex, France}
\email{kassel@math.unistra.fr}
\urladdr{www-irma.u-strasbg.fr/\raise-2pt\hbox{\~{}}kassel/}
\address{
(A.M.)
Institute of Mathematics,
University of Tsukuba,
Ibaraki 305-8571, Japan}
\email{akira@math.tsukuba.ac.jp}
\begin{document}

\begin{abstract}
Non-commutative torsors (equivalently, two-cocycles) for a Hopf algebra can be used 
to twist comodule algebras. 
We prove a theorem that affords a presentation by generators and relations 
for the algebras obtained by such twisting. 
We give a number of examples, including new constructions of the quantum affine spaces
and the quantum tori.
\end{abstract}

\maketitle

\section*{Introduction}

In this introduction we work for simplicity over a field~$k$ whereas
in the text $k$~denotes an arbitrary commutative ground ring.

The starting point of this work is the following classical
construction in algebraic geometry, and its non-commutative counterpart.
First recall that a \emph{torsor} for an algebraic group~$G$ is a right
$G$-variety~$T$ such that the map
\begin{equation}\label{eq-torsor} T\times G \longrightarrow T\times
T\, ; \quad (t,g)\mapsto (t,tg)
\end{equation} is an isomorphism.

One of the uses of torsors is in \emph{twisting $G$-varieties}.
Assuming that $G$ acts on the left on~$X$ and that $T$ is a torsor, we
may form the quotient
\[ {}_T\! X = G \, \backslash (T \times X) \, .
\]
with respect to the action $g \cdot (t, x) = (tg^{-1}, gx)$. 
For example, when $G= PGL_n$ and $X=\mathbb{P}^n$ is a projective
space, the varieties $_T\! X$ are the famous Severi-Brauer varieties.

In case $T$ is a bitorsor, in the sense that it possesses an
additional left action of~$G$ turning~$T$ into a torsor and commuting
with the right action, then ${}_T\!X$ has again a left action, induced
by~$g\cdot(t, x) = (g t, x)$. Thus the twisting process may be iterated.

It is important to note that, whenever $k$ is algebraically closed,
there is only one $G$-torsor up to isomorphism, namely $G$ itself with
its action by right translations.  This torsor is ``trivial'' in the
sense that ${}_T\! X = X$.  It follows for a general field~$k$
that~${}_T\!X$ and~$X$ are ``forms'' of one another, that is, they
become isomorphic over the algebraic closure~$\bar k$ of~$k$.  
Torsors are thus genuinely arithmetic objects.

We also recall that the $G$-torsors over~$k$ can be described in terms
of cocycles for the Galois group~$\Gal(\bar k/k)$, and in this way the
set of their isomorphism classes can be identified with the
non-abelian Galois cohomology set~$H^1(k, G)$ (see~\cite[Chap.~3]{Se}).  
Likewise, the set of isomorphism classes of $G$-bitorsors is in bijection 
with~$H^1(k, Z(G))$, which is a group (here $Z(G)$ is the centre of $G$).

When $G$ is a finite group acting on an affine variety, the quotient
always exists and is affine (see~\cite[Prop.~6.15]{borel}): indeed, if
$A$ is the coordinate ring of~$X$, then that of $X/G$ is~$A^G$.
Therefore, it makes sense to state the above in terms of
algebras, and one may wish to extend it to non-commutative algebras.
This has been done by several authors using a certain twisting procedure; 
see \cite{AEGN, doi, Mon2, schauen}.
Various properties preserved under such twistings have been investigated in~\cite{AEGN, Mon2}.

The natural setting in the non-commutative case is that of Hopf algebras. 
Recall that for any Hopf algebra~$H$ the analogue of a torsor
for~$H$ is a so-called ``cleft Galois object''; see~\cite[\S~7.2]{Mon}.  
A Galois object is defined as an algebra satisfying~\eqref{eq-torsor}
after an obvious translation from varieties to algebras of functions;
the ``cleft'' condition on the other hand is a technical requirement
that is always satisfied when the Hopf algebra is finite-dimensional or pointed.  
Bitorsors correspond to ``lazy bicleft biGalois objects'' for~$H$.  
The latter have a cohomological counterpart: 
just as bitorsors correspond to elements of~$H^1(k,Z(G))$, 
the lazy bicleft biGalois objects are by~\cite{BC} in one-to-one correspondence with the elements 
of the second lazy cohomology group~$\H_\ell^2(H)$ of~$H$, 
whose definition involves cocycles in some generalised sense, or
equivalently, invariant Drinfeld twists (we will recall the definition in Section~\ref{sec-preliminaries}).

In the case where $H$ is the Hopf algebra~$\OO_k(G)$ of $k$-valued
functions on a finite group~$G$ (which is the example we are chiefly
interested in), the relationship between~$H^1(k, Z(G))$
and~$\H^2_\ell( \OO_k(G) )$ is most clearly illustrated by the
following exact sequence, which the first and second-named authors
have obtained in~\cite{GK}:
\[ 
0 \longrightarrow H^1(k, Z(G) ) \longrightarrow \H^2_\ell(\OO_k(G)) 
\longrightarrow \H^2_\ell( \OO_{\bar k}(G) ) \longrightarrow 1 \, .
\] 
(This holds when $k$ has characteristic~$0$ and the irreducible
$\bar k$-representations of~$G$ can be realized over~$k$.)  
Thus we see that $\H^2_\ell( \OO_k(G) )$ carries a mixture of arithmetic
information coming from~$H^1(k, Z(G))$ and purely non-commutative
information coming from~$\H^2_\ell( \OO_{\bar k}(G) )$ 
(the latter is non-trivial in general, and can even be non-abelian, 
as was established in~\cite{GK}).

Our goal in the present paper is to have a new look at the twisting procedure
in the non-commutative setting, and to perform some explicit
computations; given an algebra and a torsor, we wish to describe as
precisely and as simply as possible the new algebra obtained. This is
achieved with the Presentation Theorem \ref{presentation}.

We start in Section~\ref{sec-preliminaries} by giving all necessary definitions
and by recalling the twisting construction in the context of Hopf algebras.
We are somewhat more explicit than is usual in the literature, and state
some results in the framework of braided categories.

At first sight, the twisting procedure does not seem to be the naive
non-commut\-ative analogue of the one described in this introduction. 
In Section~\ref{sec-group} however, when we specialize the general
results to the Hopf algebra $\OO_k(G)$, we are able to show
that the two definitions do indeed coincide. 
We thus provide a connection, often neglected in the ``non-commutative literature'',
between the non-commutative twisting and the above-mentioned twisting in algebraic geometry.

Section~\ref{sec-presentation} contains the Presentation Theorem. 
The latter roughly says that the ``obvious'' 
generators and relations which naturally appear for a twisted algebra
are actually sufficient, and provide a complete presentation. 
Our presentation theorem is not surprising in its general form (Theorem~\ref{presentation}),
but its explicit forms 
(Theorems~\ref{thm-presentation-group-algebra-case} and~\ref{thm-presentation})
turn out to be powerful when applied to special situations; 
in particular, they yield the familiar quantum affine spaces and quantum tori.
We feel that our method of computing  twisted algebras is quite natural and useful, 
as is demonstrated by our concrete examples. 

In Section~\ref{sec-braided}, we address the following question: 
if the algebra to be twisted is itself a Hopf algebra, do we get another Hopf algebra? 
The answer is essentially yes, although one
must consider the more general class of Hopf algebras in braided
categories to get a precise statement. These algebras have been
called ``braided groups" by Majid~(\cite{Maj, Maj2}). We give an
example of a braided group which is a twist of the algebra~$SL(2)$.

Our typical setting is with~$H= \OO_k(G)$, so that we start with
a~$G$-algebra~$A$ and obtain a new~$G$-algebra which is
denoted~$A_F$. For most of the article, in a way, we are concerned
with properties of~$A_F$ which are~$G$-equivariant in some sense or
other. Typically these can be formally derived from corresponding
properties of~$A$. However one may forget the~$G$-action and
study~$A_F$ as an algebra in its own right. This we do in
Section~\ref{sec-u-sl2}: there we study a twist of the universal
enveloping algebra of~$\slt$ and show that its representation theory
is considerably more involved than the well-known representation
theory of~$\slt$.  Restricting attention to those modules which also
have an action of~$G$ would not have produced anything truly new.

Finally, in the appendix we show that the twisting technique provides
a simple alternative construction of versal extensions for bicleft
biGalois objects of~$\OO_k(G)$.  Versal extensions had been
constructed by Eli Aljadeff and the second-named author in~\cite{AK}
for cleft Galois objects of a large family of Hopf algebras, including
all finite-dimensional ones.

\section{Preliminaries}\label{sec-preliminaries}

In this section we recall some Hopf-theoretic constructions. We shall
give the definition of a two-cocycle~$\sigma $ for a Hopf algebra~$H$
and explain how to build certain algebras~${}_{\sigma } H$. Further,
we explain how an~$H$-comodule algebra~$A$ can be twisted into an
object~${}_{\sigma } A$, which is again an~$H$-comodule algebra
when~$\sigma $ is ``lazy''. What is more, these two operations agree
when~$A = H$. We prove the first general properties, and introduce the
vocabulary of Drinfeld twists.

The material in this section builds on work of Doi \cite{doi} and
Schauenburg \cite{schauen}. In particular the Hopf algebra~${}_\sigma
H_{\sigma ^{-1}}$ presented below was investigated by Doi and
rediscovered by Schauenburg, who also established the monoidal
equivalence of Section~\ref{subsec-twist-Hopf}.

We fix a non-zero commutative ring~$k$ over which all our
constructions are defined.  In particular, all linear maps are
supposed to be $k$-linear and unadorned tensor products mean tensor
products over~$k$.

\subsection{Hopf algebras}

We assume that the reader is familiar with the language of Hopf
algebras. In the sequel, the coproduct of a Hopf algebra will always
be denoted by~$\Delta$, and its counit by~$\eps$.  We use the
Heyneman-Sweedler sigma notation (see \cite[Sect.~1.2]{Sw}):
\[
\Delta(x) = \sum_{(x)} x_1 \otimes x_2
\]
for the coproduct of an element~$x$ of the Hopf algebra.

Assume that $H$ is a Hopf algebra. We let ${}^{H}\mathcal{M}$ denote
the $k$-linear additive category of left $H$-comodules.  This category
is abelian if $H$ is a flat module over~$k$.  
The category~${}^{H}\mathcal{M}$ naturally forms a $k$-linear monoidal category.
We denote the $H$-coaction on every $M \in {}^{H}\mathcal{M}$ by 
\[
M \to H \otimes M \, ; \;\;
m \mapsto \sum_{(m)} \, m_{-1} \otimes m_0  \, .
\]  
Recall that the tensor product of $M$, $N$ in ${}^{H}\mathcal{M}$ is the
$k$-module $M \otimes N$ equipped with the $H$-coaction 
\[
m \otimes n \mapsto \sum_{(m),(n)} \, m_{-1}n_{-1} \otimes m_0 \otimes n_0 \, .
\]

\subsection{Cocycles and Galois objects}\label{subsec-two-cocycles-on-Hopf-algebras}

A \emph{two-cocycle} of a Hopf algebra~$H$ is a bilinear form~$\sigma
: H \times H \to k$ satisfying the equations
\begin{equation}\label{2cocycle} 
\sum_{(x),(y)} \,\sigma(x_{1}, y_{1}) \, \sigma(x_{2}y_{2} , z) 
= \sum_{(y),(z)} \, \sigma(y_{1} , z_{1}) \, \sigma(x , y_{2} z_{2})
\end{equation} for all $x,y,z \in H$. The only two-cocycles that we
shall consider in this paper will be convolution-invertible, so that
we may as well consider invertibility as part of the definition.

Two two-cocycles $\sigma $ and $\tau$ are called \emph{equivalent} if
there exists a convolution-invertible linear form $\lambda : H \to k$
such that for all $x,y \in H$ we have
\begin{equation}\label{cohomologous} \tau (x, y) = \sum_{(x),(y)} \,
\lambda(x_1) \, \lambda(y_1) \, \sigma(x_2, y_2) \,
\lambda^{-1}(x_3y_3) \, ,
\end{equation} where $\lambda^{-1}$ is the convolution inverse
of~$\lambda$.

Assume that $\sigma : H \times H \to k$ is a two-cocycle of~$H$.  We
define a right $H$-comodule algebra ${}_{\sigma} H$ as follows: its
underlying right $H$-comodule is~$H$ itself and as an algebra its
product is given by
\begin{equation*}
x *_{\sigma} y = \sum_{(x),(y)} \, \sigma(x_1 , y_1) \, x_2 \, y_2
\end{equation*} 
for all $x, y \in H$. The unit of ${}_{\sigma}H$ is
given by $\sigma(1, 1)^{-1}$.  One checks that equivalent two-cocycles
give rise to isomorphic comodule algebras (see \eqref{alg-isom}
below), and vice versa.

Right $H$-comodule algebras of the form ${}_{\sigma} H$ can be
characterized as the \emph{cleft (right) $H$-Galois objects}, the
non-commutative analogues of $G$-torsors alluded to in the
introduction; see~\cite[\S~7.2]{Mon} for this.  For all purposes
of this paper however, we may as well define a cleft $H$-Galois
object to be a right $H$-comodule algebra of the form~${}_{\sigma} H$.
The mirror-image construction gives those left $H$-comodule algebras
which can be characterized as \emph{cleft left $H$-Galois objects};
for this construction we should replace two-cocycles with the opposite
sided version, i.e., with bilinear forms $\nu$ on $H$ satisfying
\begin{equation*} 
\sum_{(x),(y)} \, \nu(x_{1}y_{1} , z) \, \nu(x_{2},
y_{2})= \sum_{(y),(z)} \, \nu(x , y_{1} z_{1}) \, \nu(y_{2} , z_{2}) \, ;
\quad \mbox{cf. \eqref{2cocycle}}.
\end{equation*}

\subsection{Twisting Hopf algebras and comodule algebras}\label{subsec-twist-Hopf} 

Assume that the bilinear form $\sigma : H\times H \to k$ 
is a two-cocycle of a Hopf algebra $H$.  Let $\sigma^{-1}$ denote
the convolution inverse of $\sigma$.  We define a bialgebra
${}_{\sigma}H_{\sigma^{-1}}$ as follows:\ its underlying coalgebra is
$H$ itself and as an algebra its product is given by
\begin{equation}\label{twistedproductofHopf} 
x \star_{\sigma} y =
\sum_{(x),(y)} \, \sigma(x_1 , y_1) \, x_2 \, y_2 \, \sigma^{-1}(x_3 , y_3)
\end{equation}
for all $x, y \in H$. This ${}_{\sigma}H_{\sigma^{-1}}$
indeed forms a Hopf algebra with respect to the original counit of~$H$
and the twisted antipode as given in~\cite[Th.~1.6\,(b)]{doi}.

Let us write $H' = {}_{\sigma}H_{\sigma^{-1}}$.  Since $H = H'$ as
coalgebras, every left $H$-comodule $M \in {}^{H}\mathcal{M}$ can be
regarded as a left $H'$-comodule in the obvious manner. Let
${}_{\sigma}M \in {}^{H'}\mathcal{M}$ denote the resulting object. It
is known that $M \mapsto {}_{\sigma}M$ gives a $k$-linear isomorphism
${}^{H}\mathcal{M} \overset{\cong}{\longrightarrow} {}^{H'}\mathcal{M}$ 
of monoidal categories 
when combined with the following isomorphisms in~${}^{H'}\mathcal{M}$:
\begin{multline}\label{monoidstruc1} 
\xi = \xi_{M,N} : {}_{\sigma}M \otimes {}_{\sigma}N
 \overset{\cong}{\longrightarrow} {}_{\sigma}(M \otimes N) \, ;\\ 
 \xi(m \otimes n) = \sum_{(m),(n)} \sigma(m_{-1}, n_{-1}) \, m_0 \otimes n_0 \, ,
\end{multline}
where $M,\ N \in {}^{H}\mathcal{M}$, and the isomorphism
\begin{equation}\label{monoidstruc2} 
\eta : k \overset{\cong}{\longrightarrow} {}_{\sigma}k\, ;
\quad \eta(1) = \sigma(1, 1)^{-1} .
\end{equation}
We remark that the convolution inverse $\sigma^{-1}$ of $\sigma$, 
regarded as a bilinear form $H' \times H' \to k$, is a two-cocycle of $H'$, such that
${}_{\sigma^{-1}}H'_{\sigma} = H$. We have the obvious functor
${}^{H'}\mathcal{M} \to {}^{H}\mathcal{M}$ denoted by $P \mapsto
{}_{\sigma^{-1}}P$, just as above. This together with the monoidal
structures defined in the same way as~\eqref{monoidstruc1},
\eqref{monoidstruc2}, with $\sigma$ replaced by~$\sigma^{-1}$, 
gives an inverse of the monoidal isomorphism ${}^{H}\mathcal{M} \overset{\cong}{\longrightarrow}
{}^{H'}\mathcal{M}$ above.

The monoidal isomorphism preserves algebra objects, i.e., comodule
algebras.  More explicitly, if $A$ is a left $H$-comodule algebra,
then ${}_{\sigma}A$ is a left $H'$-comodule algebra with respect to
the product defined by
\begin{equation}\label{twistedproduct-of-A} 
a *_{\sigma} b = \sum_{(a),(b)} \, \sigma(a_{-1} , b_{-1}) \, a_0 \, b_0
\end{equation} 
for all $a, b \in A$. 

We may set $A = H$ in the situation above; this is possible since $H$ is a
left $H$-comodule algebra. Now, notice that we have a monoidal
isomorphism ${}^{H}\mathcal{M}^H \cong {}^{H'}\mathcal{M}^H$ between
the bicomodule categories, in the same way as above.  Since $H$ is in
fact an $(H, H)$-bicomodule algebra, ${}_{\sigma}H$ is an $(H',
H)$-bicomodule algebra; it is a cleft Galois object on both
sides. This ${}_{\sigma}H$, regarded as just a right $H$-comodule
algebra, coincides with what was constructed in Section~\ref{subsec-two-cocycles-on-Hopf-algebras}.

Assume that $\tau$ is another two-cocycle of $H$, and set $H'' =
{}_{\tau}H_{\tau^{-1}}$.  Assume that $\sigma$ and $\tau$ are
equivalent, and choose a convolution-invertible linear form $\lambda$
on $H$ satisfying \eqref{cohomologous}. Let $A$ be a left $H$-comodule
algebra.  As is directly verified,
\begin{equation*}
f : H'' \to H', \quad f(x) = \sum_{(x)} \, \lambda(x_1) \, x_2 \, \lambda^{-1}(x_3)
\end{equation*}
is a Hopf algebra isomorphism,
\begin{equation}\label{alg-isom} 
g : {}_{\tau}H \to {}_{\sigma}H, \quad g(x) = \sum_{(x)} \, \lambda(x_1) \, x_2
\end{equation} 
is an algebra isomorphism, and the diagram
\begin{equation*}
\begin{CD} {}_{\tau}A @>{}>> H'' \otimes {}_{\tau}A \\ 
@V{g} VV @VV{f\otimes g}V \\ 
{}_{\sigma}A @>{}>> H' \otimes {}_{\sigma}A,
\end{CD}
\end{equation*} commutes, where the horizontal arrows denote the
natural coactions. In particular, if $A = H$, then $g$ is an
isomorphism of right $H$-comodule algebras as well.

\begin{Rem}\label{rmk-alternative-schauenburg} 
Since ${}_{\sigma}H$ is in particular an $(H', H)$-biGalois object, 
it follows by Schauenburg~\cite[Th.~5.5]{schauen} that the cotensor product 
$ M \mapsto  {}_\sigma H \, \square_H M $ gives a $k$-linear equivalence
${}^H \mathcal{M} \overset{\cong}{\longrightarrow} {}^{H'} \mathcal{M}$
of monoidal categories; 
we observe that in our situation, we do not need the assumption posed in~\cite{schauen} 
that $H$ is flat over~$k$.
 
The structure map $M \to H \otimes M$ on $M$ induces an isomorphism 
\[
{}_{\sigma}M \overset{\cong}{\longrightarrow} {}_\sigma H \, \square_H M
\]
in ${}^{H'} \mathcal{M}$.
This shows that the isomorphism 
${}^H \mathcal{M} \overset{\cong}{\longrightarrow} {}^{H'} \mathcal{M}\, ;
\; M \mapsto {}_{\sigma}M$ 
given above
is naturally isomorphic to Schauenburg's equivalence.
It follows that if $A$ is a left $H$-comodule algebra, then the left $H'$-comodule
algebra ${}_{\sigma}A$ is isomorphic to ${}_\sigma H \, \square_H A$.

What this means is that the new multiplication on~${}_\sigma A$ can be
described alternatively as induced from the multiplication on the
tensor product of algebras~${}_\sigma H \otimes A$. In Remark
\ref{rmk-naive-def} below, we shall see that this takes a particularly
simple form in the case of~$H = \OO_k(G)$, and indeed it will become
clear that the construction of~${}_\sigma A$ is an analogue of the
classical case considered in the introduction. 
For all other purposes though, both theoretical and practical, 
we stick to the definition of~${}_\sigma A$ given by~\eqref{twistedproduct-of-A}.
\end{Rem}

\subsection{Braided structures}\label{subsec-braided-structures}

Now assume that the Hopf algebra~$H$ is cobraided (or coquasitriangular)
with universal $R$-form $r : H \times H \to k$.
This equips the monoidal category~${}^H \mathcal{M}$ of comodules with a braided structure,
the braiding
\[ 
\gamma_{M,N} : M\otimes N \to N\otimes M
\]
between two comodules $M,N$ being given by
\[
\gamma_{M,N}(m \otimes n) = \sum_{(m), (n)} \, r(n_{-1}, m_{-1}) \, n_0 \otimes m_0
\] 
for all $m \in M$ and $n\in N$ (for details, see \cite[VIII.5]{Ks}).
Note that a commutative Hopf algebra is always cobraided with
$r(x,y) = \eps(x) \eps(y)$ for all $x,y \in H$.

Then a two-cocycle~$\sigma $ on~$H$ defines a cobraided structure
on the Hopf algebra~$H' = {}_\sigma H_{\sigma^{-1}}$ 
with universal $R$-form~$r_\sigma $ given by
\[ 
r_\sigma = \sigma_{21} \, r \, \sigma^{ -1} \, ,   
\]
where~$\sigma_{21}(x, y) = \sigma(y, x)$. 
It follows that~${}^{H'} \mathcal{M}$ also possesses a braided structure.
The following is a standard result.

\begin{prop}\label{prop-equiv-braided-categories} 
The functor 
${}^H \mathcal{M} \overset{\cong}{\longrightarrow} {}^{H'} \mathcal{M}\, ;
\; M \mapsto {}_{\sigma}M$ 
defined above is an equivalence of braided monoidal categories.
\end{prop}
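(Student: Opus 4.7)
The monoidal equivalence ${}^H\mathcal{M} \to {}^{H'}\mathcal{M}$, $M \mapsto {}_\sigma M$, together with the structure maps $\xi$ and $\eta$, is already established in Section~\ref{subsec-twist-Hopf}, so the only new content in the proposition is compatibility with the braidings. Concretely, writing $\gamma$ for the braiding on ${}^H\mathcal{M}$ (defined using $r$) and $\gamma'$ for the braiding on ${}^{H'}\mathcal{M}$ (defined using $r_\sigma = \sigma_{21} * r * \sigma^{-1}$, convolution over $H$), the plan is to verify that the square
\[
\begin{CD}
{}_\sigma M \otimes {}_\sigma N @>\gamma'_{{}_\sigma M,\,{}_\sigma N}>> {}_\sigma N \otimes {}_\sigma M \\
@V\xi_{M,N}VV @VV\xi_{N,M}V \\
{}_\sigma(M\otimes N) @>{}_\sigma(\gamma_{M,N})>> {}_\sigma(N\otimes M)
\end{CD}
\]
commutes for all $M, N \in {}^H\mathcal{M}$. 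Since the underlying functor is an isomorphism on objects and morphisms, this single identity is what upgrades the monoidal equivalence to a braided one.

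The verification is a direct Sweedler-notation computation. First, I would expand $\gamma'(m \otimes n) = \sum r_\sigma(n_{-1}, m_{-1})\, n_0 \otimes m_0$ using the explicit form of $r_\sigma$, then apply $\xi_{N,M}$ to pick up one more factor of $\sigma$ on $(n_0)_{-1}$ and $(m_0)_{-1}$; by coassociativity of the $H$-coactions on $M$ and $N$, the left-hand side becomes
\[
\sum \sigma(m_{-4}, n_{-4})\, r(n_{-3}, m_{-3})\, \sigma^{-1}(n_{-2}, m_{-2})\, \sigma(n_{-1}, m_{-1})\, n_0 \otimes m_0.
\]
On the right-hand side, applying $\xi_{M,N}$ first and then the image ${}_\sigma(\gamma_{M,N})$ (which as a $k$-linear map equals $\gamma_{M,N}$, using $r$) yields
\[
\sum \sigma(m_{-2}, n_{-2})\, r(n_{-1}, m_{-1})\, n_0 \otimes m_0.
\]
The two expressions will match after observing that, in the first expression, the adjacent factors $\sigma^{-1}(n_{-2}, m_{-2})\,\sigma(n_{-1}, m_{-1})$ collapse by convolution invertibility to $\eps \otimes \eps$, which then absorbs two levels of coaction back into the remaining terms.

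The main obstacle is purely bookkeeping: one must be scrupulous with the many iterated coactions and use coassociativity and the fact that $\rho_M$, $\rho_N$ are coaction morphisms to move the $\sigma^{\pm 1}$ through the Sweedler indices. In particular, the identity $r_\sigma = \sigma_{21} * r * \sigma^{-1}$ is tailored precisely so that the ``inner'' $r$ survives the cancellation and the ``outer'' $\sigma_{21}$ combines with the $\sigma$ from $\xi_{N,M}$ on the target, while $\sigma^{-1}$ cancels with the $\sigma$ from $\xi_{M,N}$ on the source. Once the indices are aligned, the two-cocycle identity~\eqref{2cocycle} is not even needed for this particular step --- only convolution-invertibility of $\sigma$ and coassociativity. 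The cocycle identity is of course hidden in the earlier fact that $\xi$ is associative, but it does not re-enter here. Hence the proposition reduces to a short, formal calculation.
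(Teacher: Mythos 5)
Your argument is correct, and it supplies exactly the verification that the paper omits: Proposition~\ref{prop-equiv-braided-categories} is stated there as ``a standard result'' with no proof given, and the standard proof is precisely the one-square Sweedler computation you describe. Your two displayed expressions are right, and the cancellation $\sigma^{-1}*\sigma=\eps\otimes\eps$ together with coassociativity does reduce the first to the second; you are also right that the two-cocycle identity plays no role at this step, only convolution-invertibility. One small slip in the prose (not in the formulas): in the composite $\xi_{N,M}\circ\gamma'$ it is the $\sigma^{-1}$ factor of $r_\sigma$ that cancels against the $\sigma$ contributed by $\xi_{N,M}$ on the target, while the $\sigma_{21}$ factor survives and matches the $\sigma$ contributed by $\xi_{M,N}$ on the source side of the square --- your sentence has these two roles interchanged, but the displayed computation is the correct one.
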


Following~\cite{Maj} (see also~\cite{Gur, Maj2}), we call an
algebra~$A$ in~${}^H \mathcal{M}$ \emph{braided-commutative} if
\begin{equation*}
\mu_A = \mu_A \circ \gamma_{A,A} \, ,
\end{equation*}
where $\mu_A : A \otimes A \to A$ is the product of~$A$.

\begin{coro}\label{coro-comm-gives-braided-comm}
If~$A$ is a braided-commutative algebra in~${}^H \mathcal{M}$,
then~${}_\sigma A$ is a braided-commutative algebra in~${}^{H'}
\mathcal{M}$.
\end{coro}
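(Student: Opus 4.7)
The plan is to deduce the corollary as a formal consequence of Proposition~\ref{prop-equiv-braided-categories}: braided-commutativity of an algebra in a braided monoidal category is defined purely in terms of the product morphism and the braiding, so it is preserved by any equivalence of braided monoidal categories. Since the functor $M \mapsto {}_\sigma M$, together with the monoidal structure maps $\xi$ of \eqref{monoidstruc1} and $\eta$ of \eqref{monoidstruc2}, is such an equivalence, it must send braided-commutative algebras to braided-commutative algebras.

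More concretely, I would unwind what it means to say that ${}_\sigma A$ is braided-commutative. By the definition of $*_\sigma$ in \eqref{twistedproduct-of-A}, the multiplication morphism $\mu_{{}_\sigma A} : {}_\sigma A \otimes {}_\sigma A \to {}_\sigma A$ in ${}^{H'}\mathcal{M}$ factors as $\mu_A \circ \xi_{A,A}$, where $\mu_A$ is the original product on~$A$ (this is precisely the statement that the monoidal equivalence transports algebra structures). The corollary then reduces to proving the identity
\[
\mu_A \circ \xi_{A,A} = \mu_A \circ \xi_{A,A} \circ \gamma^\sigma_{{}_\sigma A, {}_\sigma A}\, ,
\]
where $\gamma^\sigma$ denotes the braiding of ${}^{H'}\mathcal{M}$ associated with the $R$-form $r_\sigma = \sigma_{21}\, r\, \sigma^{-1}$. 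By the braided-monoidal compatibility asserted in Proposition~\ref{prop-equiv-braided-categories}, we have $\xi_{A,A} \circ \gamma^\sigma_{{}_\sigma A, {}_\sigma A} = \gamma_{A,A} \circ \xi_{A,A}$; substituting this and invoking the hypothesis $\mu_A = \mu_A \circ \gamma_{A,A}$ gives the desired equality.

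The only non-trivial step is the compatibility of $\xi$ with the braidings, but that is subsumed in Proposition~\ref{prop-equiv-braided-categories}, so no real calculation remains. Should one wish to avoid appealing to that proposition, the same conclusion can be verified by a direct, if somewhat unenlightening, cocycle computation: apply the explicit formula for $\gamma^\sigma$ coming from $r_\sigma = \sigma_{21}\, r\, \sigma^{-1}$ and the explicit formula \eqref{twistedproduct-of-A} for $*_\sigma$, move the $\sigma^{-1}$ factor across the product using the two-cocycle condition \eqref{2cocycle}, and use the braided-commutativity of $\mu_A$ against~$r$ to cancel everything down to $a *_\sigma b$. The potential obstacle is simply bookkeeping of Sweedler indices in this direct route, which is why the categorical argument via Proposition~\ref{prop-equiv-braided-categories} is preferable.
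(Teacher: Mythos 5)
Your argument is correct and is exactly how the paper intends the corollary to be read: it is stated without proof as a formal consequence of Proposition~\ref{prop-equiv-braided-categories}, since braided-commutativity is expressed purely in terms of the product morphism and the braiding, both of which are transported by the braided monoidal equivalence $M \mapsto {}_\sigma M$. Your explicit unwinding via $\xi_{A,A}\circ\gamma^\sigma = \gamma_{A,A}\circ\xi_{A,A}$ is a faithful elaboration of that same argument.
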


\subsection{Lazy cocycles and biGalois objects}\label{subsec-lazy-two-cocycles} 

Following~\cite{BC}, we say that a two-cocycle $\sigma : H \times H \to k$ 
of a Hopf algebra~$H$ is \emph{lazy} if
\begin{equation}\label{lazy2} 
\sum_{(x), (y)} \, \sigma(x_1 , y_1)\, x_2y_2 = \sum_{(x), (y)} \, \sigma(x_2 , y_2) \, x_1y_1
\end{equation} 
for all $x,y \in H$.  The set $Z^2_{\ell}(H)$ of
convolution-invertible lazy two-cocycles of~$H$ is a group for the
convolution product of bilinear forms.

We say that a convolution-invertible linear form $\lambda : H \to k$ is \emph{lazy} if
\begin{equation*}
\sum_{(x)} \, \lambda(x_1) \, x_2 = \sum_{(x)} \, \lambda(x_2) \, x_1
\end{equation*} 
for all $x\in H$. For such a $\lambda$, we define a bilinear form 
$\partial(\lambda) : H \times H \to k$ for all $x,y \in H$ by
\begin{equation*}
\partial(\lambda)(x, y) 
= \sum_{(x),(y)} \, \lambda(x_1) \, \lambda(y_1) \, \lambda^{-1}(x_2 y_2) \, ,
\end{equation*} 
where again $\lambda^{-1}$ is the convolution inverse of~$\lambda$.  
The bilinear form~$\partial(\lambda)$ is a convolution-invertible lazy two-cocycle of~$H$.  
The set~$B^2_{\ell}(H)$ of such bilinear forms is a central subgroup of~$Z^2_{\ell}(H)$.

We can therefore define the quotient group
\begin{equation*}
\H_{\ell}^2(H) = Z^2_{\ell}(H)/B_{\ell}^2(H) \, ,
\end{equation*} 
which is called the \emph{second lazy cohomology group} of~$H$.

When the two-cocycle~$\sigma$ is lazy, we deduce from~\eqref{lazy2}
that the product $x \star_{\sigma} y$ in the twisted Hopf algebra
${}_{\sigma}H_{\sigma^{-1}}$ as given in~\eqref{twistedproductofHopf}
coincides with the original product, 
so that $H = {}_{\sigma}H_{\sigma^{-1}}$ as Hopf algebras.  It follows that, 
under the laziness assumption, ${}_{\sigma} H$ is an $(H, H)$-bicomodule algebra.

As was shown in~\cite[Th.~3.8]{BC}, the group $\H_{\ell}^2(H)$
classifies the \emph{bicleft biGalois objects of~$H$} up to isomorphism,
via $\sigma \mapsto {}_{\sigma} H$. Again for our purposes we may as
well define a bicleft biGalois object as an $(H, H)$-bicomodule
algebra of the form~${}_{\sigma} H$, where $\sigma$ is a lazy two-cocycle.
 
Let $\sigma$, $\tau$ be lazy
two-cocycles of $H$, and let $\sigma \, \tau$ denote their convolution
product.  Assume that $A$ is a left $H$-comodule algebra. Since
${}_{\sigma}A$ is a left $H$-comodule algebra, we can twist it by~$\tau$ 
to obtain a left $H$-comodule algebra
${}_{\tau}({}_{\sigma}A)$. 
We see from the definitions that
\begin{equation}\label{twist-twice} 
{}_{\tau}({}_{\sigma}A) = {}_{\tau \sigma}A
\end{equation} 
as left $H$-comodule algebras. 

\subsection{The point of view of Drinfeld twists}\label{sect-twists}

Suppose now that the Hopf algebra~$H$ is \emph{finitely generated
projective} as a $k$-module, and let~$K = H^* = \Hom_k(H,k)$ be the dual
Hopf algebra with evaluation map $\langle - , -\rangle : K \otimes H \to k$. 
In these circumstances, there is another point of view on two-cocycles,
which is often simpler.

The two-cocycles of~$H$ can be expressed in terms of certain
two-tensors on~$K$.  The correspondence is as follows.  Any bilinear
form $\sigma : H \times H \to k$ on~$H$ defines a two-tensor $F\in K \otimes K$ by
\begin{equation}\label{cocycle-twist} 
\langle F , x \otimes y \rangle = \sigma(x,y)
\end{equation} 
for $x,y \in H$.  It is easy to check that $\sigma$ is
convolution-invertible (resp.\ lazy) if and only if $F$ is invertible
(resp.\ invariant) in~$K \otimes K$. 
Recall that an element $F\in H \otimes H$ is \emph{invariant} if and only
\begin{equation*} 
\Delta(a) \, F = F \, \Delta(a)
\end{equation*} 
for all $a\in K$.

Moreover, $\sigma$ is a two-cocycle of~$H$ if and only if $F$ is a
\emph{Drinfeld twist} (or simply a \emph{twist}), by which we mean
that it satisfies the following equation in $K\otimes K \otimes K$:
\begin{equation*}
(F \otimes 1) \, (\Delta \otimes \id_H)(F) = (1 \otimes F) \, (\id_H \otimes \Delta)(F) \, .
\end{equation*} 
Two twists $F$ and $F'$ are called \emph{gauge
equivalent} when the corresponding two-cocycles are equivalent, and
this happens precisely when there exists an invertible element $a \in
K$ such that
\[ 
F' = (a\otimes a) \, F \, \Delta(a^{-1}) \, .
\]

A convolution-invertible lazy two-cocycle $\sigma$ of the
form~$\partial(\lambda)$ as above corresponds to an invariant
invertible Drinfeld twist~$F$ of the form
\begin{equation}\label{triv-twist} 
F = (a \otimes a) \, \Delta(a^{-1}) \in K \otimes K \, ,
\end{equation} 
where $a$ is some invertible \emph{central} element
of~$H$.  We say that a twist of the form~\eqref{triv-twist}, with
$a$~central, is \emph{trivial}.  For details, see~\cite[Sect.~1]{GK}.

Therefore, the second lazy cohomology group~$\H_{\ell}^2(H)$ is
isomorphic to the quotient of the group of invariant invertible
Drinfeld twists on~$K$ by the central subgroup of trivial twists.

The $k$-linear abelian monoidal category~${}^H\mathcal{M}$ of
left $H$-comodules is canonically identified with the category of the
same kind consisting of right $K$-modules, which we denote by~$\mathcal{M}_K$. 
Let $A$ be a left $H$-comodule algebra, or equivalently, a right $K$-module algebra. 
Assume that a two-cocycle $\sigma$ of~$H$ and a twist $F$ on~$K$ are in correspondence 
as in~\eqref{cocycle-twist}.
In this case we write $A_F$ (resp.\ $*_F$) for ${}_{\sigma}A$ (resp.\ for~$*_{\sigma}$).
By using $F$, the twisted product $*_F$ is expressed by
\begin{equation*}
a *_F b = \mu_A((a \otimes b)F)
\end{equation*} 
for all $a, b \in A$, where $\mu_A : A \otimes A \to A$ is the product of $A$.

We shall try to keep in mind both the ``two-cocycle'' and the
``Drinfeld twist'' points of view throughout the article.

\subsection{Semisimplicity of $A_F$}\label{subsec-simplicity} 

We end this section by discussing semisimplicity, a property that under certain hypotheses is preserved under twisting.

Assume that $k$ is a field and that $K$ is a finite-dimensional cosemisimple unimodular Hopf algebra. 
For example, if $G$ is a finite group, then $K =\OO_k(G)$ with $\mathrm{char} \, k \nmid |G|$, and
$K = k[G]$ satisfy the assumption.

\begin{prop}\label{prop-F-preserves-semisimple} 
Let $F \in K \otimes K$ be a twist on $K$
and let $A$ be a finite-dimensional right $K$-module algebra.  
Then the Jacobson radical $\mathrm{Rad} \, A_F$ of~$A_F$, regarded as a subspace of~$A$, 
coincides with the Jacobson radical $\mathrm{Rad} \, A$ of~$A$. 
\end{prop}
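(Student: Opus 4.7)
The formula $a *_F b = \mu_A((a \otimes b) F) = \sum (a \cdot F^{(1)})(b \cdot F^{(2)})$, with $F = \sum F^{(1)} \otimes F^{(2)}$, drives the argument. The first step is to establish a transfer principle: any subspace $I \subseteq A$ stable under the right $K$-action is a two-sided ideal of $A$ if and only if it is a two-sided ideal of $A_F$, and it is nilpotent with respect to the product of $A$ if and only if it is nilpotent with respect to $*_F$. Indeed, if $I$ is $K$-stable then for $a \in A$ and $i \in I$ each summand of $a *_F i$ lies in $A \cdot (i \cdot F^{(2)}) \subseteq A \cdot I \subseteq I$, and symmetrically $i *_F a \in I$; an $n$-fold $*_F$-product of elements of $I$ expands as a sum of ordinary $A$-products $(i_1 \cdot k_1) \cdots (i_n \cdot k_n)$ with $k_j \in K$, each of which lies in $I^n$ by $K$-stability, so nilpotency with respect to the $A$-product implies nilpotency with respect to $*_F$. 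The reverse implications follow by applying the same argument to the pair $(A_F, F^{-1})$, using $(A_F)_{F^{-1}} = A$ and that the underlying right $K$-module structure on $A$ is unaltered by twisting, so ``$K$-stable subspace of $A$'' and ``$K$-stable subspace of $A_F$'' describe the same collection of subspaces.

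The heart of the matter, and what I expect to be the main obstacle, is to prove that $\mathrm{Rad}\, A$ is itself a $K$-submodule of $A$. This is a radical-stability theorem of Linchenko type, and the hypotheses of cosemisimplicity and unimodularity of $K$ are tailored precisely so as to allow it: cosemisimplicity provides a normalised integral on $K$ (equivalently, a normalised integral in $K^*$), which via the $K^*$-coaction on $A$ gives an averaging operator $E \colon A \to A^K$ projecting onto $K$-invariants, and unimodularity makes this averaging compatible with both left and right multiplication strongly enough to force the $K$-action to permute the annihilators of the simple $A$-modules. Their intersection, which is $\mathrm{Rad}\, A$, is therefore $K$-stable. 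I would either invoke an existing theorem of this kind or argue directly along these lines. The analogous statement for $\mathrm{Rad}\, A_F$ as a subspace of $A$ follows by applying the same argument to $A_F$ viewed as a module algebra over the twisted Hopf algebra, which inherits the cosemisimple-and-unimodular property (cosemisimplicity via the monoidal equivalence of comodule categories; unimodularity via the algebra and counit structure, which are unaffected by the twist).

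Combining the two pieces: $\mathrm{Rad}\, A$ is a $K$-stable nilpotent ideal of $A$, so by the transfer principle it is a nilpotent ideal of $A_F$, giving $\mathrm{Rad}\, A \subseteq \mathrm{Rad}\, A_F$. The symmetric argument applied to $A_F$ with twist $F^{-1}$ yields the reverse inclusion, and the two Jacobson radicals coincide as subspaces of $A$.
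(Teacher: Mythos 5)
Your overall strategy is the same as the paper's: show that $\mathrm{Rad}\,A$ is $K$-stable (the paper cites the Artinian version of Linchenko's radical-stability theorem due to Skryabin, which requires only that $K$ be cosemisimple), show that a $K$-stable nilpotent ideal of $A$ remains a nilpotent ideal of $A_F$ (the paper cites Montgomery; your direct expansion of the $n$-fold $*_F$-product into ordinary products $(i_1\cdot k_1)\cdots(i_n\cdot k_n)$ is a correct proof of this), and conclude by symmetry using $(A_F)_{F^{-1}}=A$. Your first and third paragraphs are sound.

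The genuine gap is in the symmetric step, where you must apply the radical-stability theorem to $A_F$ viewed as a module algebra over the twisted Hopf algebra $K^F=({}_\sigma H_{\sigma^{-1}})^*$, i.e.\ $K$ with conjugated coproduct. You assert that $K^F$ inherits cosemisimplicity ``via the monoidal equivalence of comodule categories.'' This does not work: the equivalence produced by the twist is between ${}^H\mathcal{M}$ and ${}^{H'}\mathcal{M}$, that is, between the \emph{module} categories of $K$ and of $K^F$ --- which coincide anyway, since a Drinfeld twist leaves the algebra structure of $K$ untouched. Cosemisimplicity of $K^F$ is a statement about its \emph{comodule} category, equivalently about semisimplicity of the twisted dual algebra ${}_\sigma H_{\sigma^{-1}}$, whose multiplication genuinely differs from that of $H$; no formal categorical argument yields it. This is exactly where the unimodularity hypothesis is consumed: the paper invokes \cite[Th.~3.13]{AEGN}, which shows that a twist of a finite-dimensional cosemisimple \emph{unimodular} Hopf algebra is again cosemisimple. (In characteristic $0$ one could instead pass through Larson--Radford, since semisimplicity of $K$ is manifestly preserved, but the statement is over an arbitrary field, and the intended example $K=k[G]$ with $\mathrm{char}\,k$ dividing $|G|$ is cosemisimple and unimodular without being semisimple.) Relatedly, you attach unimodularity to the radical-stability theorem itself, where it plays no role; its job in the paper is solely to guarantee that cosemisimplicity survives the twist so that the argument can be run in reverse.
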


\pf 
Set $H = K^*$, and let $\sigma$ denote the two-cocycle of~$H$
corresponding to $F$, so that $A_F = {}_{\sigma}A$. 
By the unimodularity assumption, \cite[Th.~3.13]{AEGN} shows that
$({}_{\sigma}H_{\sigma^{-1}})^*$ is cosemisimple.  
Since $K$ is finite-dimensional cosemisimple and $A$ is left Artinian, 
the opposite-sided version of \cite[Th.~1.3]{Sk} implies that $\mathrm{Rad} \, A$ is $K$-stable. 
By Proposition~3.1\,(2) of~\cite{Mon2}, $\mathrm{Rad} \, A$ is a nilpotent ideal in ${}_{\sigma}A$, 
which implies  the inclusion $\mathrm{Rad} \, A \subset \mathrm{Rad} \, {}_{\sigma}A$. 
This result on $H$, $A$, $\sigma$ applied to ${}_{\sigma}H_{\sigma^{-1}}$, ${}_{\sigma}A$,
$\sigma^{-1}$ shows the converse inclusion $\mathrm{Rad} \, {}_{\sigma}A \subset \mathrm{Rad} \, A$.  
\epf

\begin{coro}\label{coro-ss}
Under the above hypotheses,  the twisted algebra~$A_F$ is semisimple if and only if $A$~is.
\end{coro}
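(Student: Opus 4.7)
The plan is to derive the corollary directly from Proposition~\ref{prop-F-preserves-semisimple} by invoking the standard characterisation of semisimplicity for finite-dimensional algebras over a field.

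First I would recall that a finite-dimensional algebra $B$ over a field~$k$ is semisimple if and only if its Jacobson radical $\mathrm{Rad}\, B$ is zero; this is the classical Artin--Wedderburn criterion and requires no further input from the Hopf-algebraic setup. Note that $A_F$ has the same underlying $k$-module as~$A$, so $A_F$ is finite-dimensional precisely when $A$ is, and the hypothesis on $A$ transfers to $A_F$ without comment.

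Next, the content of Proposition~\ref{prop-F-preserves-semisimple} is that, as subspaces of the common underlying vector space, $\mathrm{Rad}\, A_F = \mathrm{Rad}\, A$. In particular, one of these is the zero subspace if and only if the other is. Combining this equality with the criterion recalled above yields the desired equivalence: $A_F$ is semisimple $\iff$ $\mathrm{Rad}\, A_F = 0 \iff \mathrm{Rad}\, A = 0 \iff A$ is semisimple.

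There is really no obstacle here, since the substantive work (symmetric nilpotence of the radical under twisting, via the results of Aljadeff--Etingof--Gelaki--Nikshych and Montgomery cited in the proof of the proposition) has already been carried out. The only thing to be careful about is not to confuse ``having the same underlying $k$-module'' with ``being isomorphic as algebras''; the identification $\mathrm{Rad}\, A_F = \mathrm{Rad}\, A$ is an equality of subspaces, not an algebra isomorphism, but for vanishing this is all that is needed.
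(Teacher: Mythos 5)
Your argument is correct and is exactly the intended one: the paper states the corollary without proof, as an immediate consequence of Proposition~\ref{prop-F-preserves-semisimple} together with the standard fact that a finite-dimensional algebra over a field is semisimple if and only if its Jacobson radical vanishes. Your remark that the identification of the radicals is only an equality of subspaces, not an algebra isomorphism, is a sensible precaution and does not affect the conclusion.
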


\section{Results for the Hopf algebras $k[G]$ \& $\mathcal{O}_k(G)$}\label{sec-group}

In this section we specialize the constructions of Section~\ref{sec-preliminaries}
to our favorite examples, the Hopf algebras~$k[G]$ and~$\mathcal{O}_k(G)$. 
In each case we recall the definitions and give some information on the group of lazy two-cocycles. 
For~$\mathcal{O}_k(G)$ we also prove that the twisted
algebras~${}_\sigma A$ of Section~\ref{sec-preliminaries} have an
alternative description which is plainly that of the beginning of the introduction with
all commutativity requirements removed.

\subsection{The Hopf algebra $k[G]$}\label{subsec-group-algebra} 

Let $G$ be a group, and let $k[G]$ denote the group algebra of $G$. We
regard $k[G]$ as a cocommutative Hopf algebra with coproduct and counit
defined for all $g\in G$ by 
\[
\Delta(g) = g \otimes g  \quad\text{and}\quad 
\varepsilon(g) = 1  \, .
\]

Regard the group $k^{\times}$ of all units in $k$ as a trivial
$G$-module. Every group two-cocycle $G \times G \to k^{\times}$ is
linearized to a two-cocycle $k[G] \times k[G] \to k$ of $k[G]$, so
that we can identify the second group cohomology~$\H^2(G, k^{\times})$
with the second lazy cohomology group~$\H_{\ell}^2(k[G])$.

Assume that the group $G$ is abelian. Let $\mathrm{Alt}^2(G,
k^{\times})$ denote the group of all \emph{alternating bicharacters}
of $G$, i.e., of all bimultiplicative maps $b : G \times G \to
k^{\times}$ such that $b(g, g) = 1$ for all $g \in G$. Every group
two-cycle $\sigma : G \times G \to k^{\times}$ defines an alternating
bicharacter $b=b_{\sigma}$ by
\begin{equation}\label{b_sigma} 
b(g, h) = \frac{\sigma(h, g)}{\sigma(g, h)} \, ,
\end{equation} 
where $g, h \in G$. 
The assignment $\sigma \mapsto b_{\sigma}$ induces a (split) group epimorphism
\begin{equation}\label{group-epi} 
\H^2(G, k^{\times}) \to \mathrm{Alt}^2(G, k^{\times}) \, .
\end{equation} 
Assume that $G$ is finitely generated, and let $N$ denote the exponent 
of the torsion subgroup of~$G$. 
It is known that if $k^{\times} = (k^{\times})^N$, 
then the epimorphism~\eqref{group-epi} is an isomorphism.

The~$k[G]$-comodules can be identified with~$G$-graded modules, which
form the category~$\MG$. We keep assuming that~$G$ is abelian, so that
this category has a trivial braiding; as explained in Section~\ref{subsec-braided-structures}, 
a two-cocycle~$\sigma $ gives a new
braided structure on the same category, and we shall write~$\MGb$ when
we want to make a distinction between the braided categories. The
notation refers to the fact that the braiding
\[ 
\gamma_{M,N} : M\otimes N \to N\otimes M
\]
is defined by $\gamma_{M,N}( m \otimes n ) = b_\sigma( g, h ) \, n
\otimes m$ whenever $m$ (resp.~$n$) is homogeneous of degree~$h$
(resp.\ of degree~$g$).

By Corollary~\ref{coro-comm-gives-braided-comm}, if~$A$ is a
commutative~$G$-graded algebra, then the twisted algebra~${}_\sigma A$
is braided-commutative in~$\MGb$. 
This means that if~$a_1$ (resp.\ $a_2$) is homogeneous of degree~$g_1$ 
(resp.\ of degree~$g_2$), then
\[
a_2 *_\sigma a_1 = b_\sigma (g_1, g_2) \, a_1 *_\sigma a_2
\, .  
\]

This corollary could also have been obtained from the next lemma,
which will be of independent interest later.

\begin{lem} \label{lem-homogeneous} Let~$A$ be a~$G$-graded algebra,
and let~$a_1$ (resp.\ $a_2$) be an element of~$A$ which is homogeneous
of degree~$g_1$ (resp.\ of degree~$g_2$). One has
\[
 a_1 *_\sigma a_2 = \sigma (g_1, g_2) \, a_1 a_2 \, .   
 \]
 \end{lem}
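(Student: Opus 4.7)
The plan is to unwind the definition of the twisted product $*_\sigma$ from equation~\eqref{twistedproduct-of-A} after translating the $G$-grading on $A$ into the corresponding $k[G]$-comodule structure. The whole statement should follow from a direct computation; the main (tiny) content is to track the Sweedler notation carefully.

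First I would recall the standard identification between $G$-graded $k$-modules and left $k[G]$-comodules: a homogeneous element $a$ of degree $g$ corresponds to the element $a$ with coaction $a \mapsto g \otimes a$, i.e., in Sweedler notation $a_{-1} = g$ and $a_0 = a$. Under this identification, a $G$-graded algebra $A$ is precisely a $k[G]$-comodule algebra, which is the setting in which the twisted product $*_\sigma$ of~\eqref{twistedproduct-of-A} is defined.

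Next, given the homogeneous elements $a_1$ of degree $g_1$ and $a_2$ of degree $g_2$, I would substitute directly into~\eqref{twistedproduct-of-A}: the sums $\sum_{(a_1)}$ and $\sum_{(a_2)}$ reduce to single terms with $(a_i)_{-1} = g_i$ and $(a_i)_0 = a_i$, giving
\[
a_1 *_\sigma a_2 \;=\; \sigma\bigl((a_1)_{-1}, (a_2)_{-1}\bigr)\, (a_1)_0 \, (a_2)_0 \;=\; \sigma(g_1, g_2)\, a_1 a_2,
\]
which is exactly the claimed identity.

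There is essentially no obstacle: the lemma is a direct unpacking of the definition once the comodule/grading dictionary is in place. The only thing to be careful about is that $\sigma$ here denotes the two-cocycle on the Hopf algebra $k[G]$ obtained by linearizing the group two-cocycle (as described at the start of Section~\ref{subsec-group-algebra}), so that $\sigma(g_1, g_2)$ makes sense as the value on group elements viewed in $k[G]$.
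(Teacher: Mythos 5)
Your proof is correct and is exactly what the paper intends: the paper's own proof is the one-line "This is immediate from~\eqref{twistedproduct-of-A}", and your computation (coaction $a\mapsto g\otimes a$ for a homogeneous element of degree~$g$, then substitute into~\eqref{twistedproduct-of-A}) is precisely the unpacking of that remark.
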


\begin{proof} 
This is immediate from~\eqref{twistedproduct-of-A}.
\end{proof}

\subsection{The Hopf algebra~$\mathcal{O}_k(G)$}\label{subsec-twists-for-abelian-groups}

Let $G$ be a \emph{finite} group, and let
$\mathcal{O}_k(G)$ denote the algebra of all functions on $G$.  This
$\mathcal{O}_k(G)$ is the dual Hopf algebra of $k[G]$, so that
left~$\mathcal{O}_k(G)$-comodule algebras can be identified with
right~$k[G]$-module algebras. It also follows from duality
that~$\H_{\ell}^2(\mathcal{O}_k(G))$ is identified with the group of
invariant twists on $k[G]$ divided by the central subgroup of trivial
twists.

The category of~$G$-modules always has the trivial braiding. If we
follow the procedure of Section~\ref{subsec-braided-structures}, we find
that another braiding for the same category is given by
the~$R$-matrix~$F_{21} F^{-1}$, where~$F$ is the invariant twist
corresponding to the lazy cocycle~$\sigma $.

\begin{Rem}\label{rmk-naive-def} 
Remark~\ref{rmk-alternative-schauenburg} specializes to the following. 
Let us write~$\OO$ as a shorthand for~$\mathcal{O}_k(G)$. 
Let~$\sigma $ be a lazy two-cocycle. Then~${}_\sigma \OO$
is an~$(\OO, \OO)$-bicomodule algebra, and so also a~$(G, G)$-bimodule. 
Explicitly if~$e_g$ is the Dirac function at~$g\in G$, 
the actions are given by $h \cdot e_g = e_{g h^{-1}}$ and $e_g \cdot h = e_{h^{-1} g}$.

Now let $A$ be any left $\OO$-comodule algebra, or right $G$-algebra. 
With the left action on ${}_\sigma \OO$ above turned into a right action using the antipode, 
we regard ${}_\sigma \OO \otimes A$
as a right $G$-algebra by 
\[
(e_g \otimes a) \cdot h = e_{gh} \otimes a \cdot h \, .
\] 
Let us define the algebra of $G$-invariants by
\[ 
A' = ( {}_\sigma \OO \otimes A)^G \, .
\]
This $A'$ is again a right $G$-algebra by using the right action on ${}_\sigma \OO$,
or explicitly by $(e_g \otimes a) \cdot h = e_{h^{-1}g} \otimes a$.

Now what is stated in Remark~\ref{rmk-alternative-schauenburg} is
that~$A' \cong {}_\sigma A$ as right~$G$-algebras (which of course can
also be checked directly).  This affords, at long last, the naive
definition of~${}_\sigma A$ which was announced in the introduction.
\end{Rem}

Now assume that $G$ is abelian, and that $k$ is a field which contains
a primitive $N$-th root of~$1$, where $N$ is the exponent of~$G$. 
In this favorable situation, the discrete Fourier
transform induces a Hopf algebra isomorphism
\[ 
\OO_k(G) \cong k[\widehat{G}] \, ,
\]
where $\widehat{G} = \Hom(G,k^{\times})$ is the group of characters
of~$G$. This reduces the study to the case of group algebras already
considered. In Section~\ref{subsec-abelian-groups-are-enough} below we
explain that abelian groups are almost (not quite) the only groups to consider.

Let us give some explicit formulas before we proceed. Represent an
element of~$\H^2_{\ell}(\OO_k(G))$ by a twist~$F \in k[G] \otimes
k[G]$. For any $\chi \in \widehat{G}$, let $e_{\chi}$ be the
corresponding idempotent in~$k[G]$. Expand~$F$ in the corresponding
basis of~$k[G] \otimes k[G]$:
\begin{equation*}
F = \sum_{\chi, \psi \in \widehat{G}} \, \sigma(\chi, \psi) \, e_{\chi} \otimes e_{\psi} \, .
\end{equation*}
Then this $\sigma$ is precisely the two-cocycle of $\widehat{G}$ which
corresponds to~$F$, 
and the alternating bicharacter $b_F : \widehat{G} \times \widehat{G} \to k^{\times}$ 
which arises from~$F$ is given by
\begin{equation}\label{eq-b-from-c} 
b_F(\chi,\psi) = \frac{\sigma(\psi, \chi)}{\sigma(\chi,\psi)}
\end{equation}
for all $\chi, \psi \in \widehat{G}$.

Corollary~\ref{coro-comm-gives-braided-comm} implies that if~$A$ is a
commutative~$G$-algebra, then the twisted algebra~${}_\sigma A = A_F$
is braided-commutative in~${}^{\widehat{G}} \mathcal{M}_b$. This means
that if~$a_1$ (resp.~$a_2$) is homogeneous of degree~$\chi_1$ 
(resp.\ of degree~$\chi_2$), that is, 
if~$a_i$ is an eigenvector for the action of~$G$ with associated character~$\chi_i$, then
\[
a_2 *_\sigma a_1 = b_\sigma (\chi _1, \chi _2) \, a_1 *_\sigma a_2
\, . 
\]
Likewise, Lemma \ref{lem-homogeneous} now reads as follows.

\begin{lem} \label{lem-eigenvectors} Let~$A$ be a right~$G$-algebra,
and let~$a_1$ and~$a_2$ be eigenvectors for~$G$ with
characters~$\chi_1$ and~$\chi_2$ respectively. Then in the
algebra~$A_F$ we have
\[
a_1 *_F a_2 = \sigma (\chi_1, \chi_2) \, a_1 a_2 \, .  
\]
\end{lem}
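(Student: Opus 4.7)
The plan is to compute directly from the alternative formula for the twisted product recorded at the end of Section~\ref{sect-twists}, namely $a_1 *_F a_2 = \mu_A((a_1 \otimes a_2) F)$, where $A$ is viewed as a right $k[G]$-module algebra. First I would substitute the expansion $F = \sum_{\chi, \psi \in \widehat{G}} \sigma(\chi,\psi)\, e_\chi \otimes e_\psi$ of $F$ in the basis of orthogonal idempotents of $k[G] \otimes k[G]$. The only thing that must be checked is the identification of the right $k[G]$-action on $A$ with the original $G$-action, together with the fact that $e_\chi$ acts as projection onto the $\chi$-eigenspace.

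That identification is the usual character-orthogonality computation: since $a_i \cdot g = \chi_i(g) \, a_i$ for all $g \in G$, and $e_\chi = |G|^{-1} \sum_g \chi(g)^{-1} g$ in $k[G]$, one gets $a_i \cdot e_\chi = |G|^{-1} \sum_g (\chi_i \chi^{-1})(g) \, a_i = \delta_{\chi, \chi_i}\, a_i$. Substituting back, the double sum defining $(a_1 \otimes a_2) \cdot F$ collapses to the single term $\sigma(\chi_1, \chi_2)\, a_1 \otimes a_2$, and applying $\mu_A$ yields the desired formula $a_1 *_F a_2 = \sigma(\chi_1, \chi_2)\, a_1 a_2$.

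Alternatively, one could reduce this immediately to Lemma~\ref{lem-homogeneous}: the discrete Fourier transform Hopf-algebra isomorphism $\OO_k(G) \cong k[\widehat{G}]$ converts a left $\OO_k(G)$-comodule algebra into a $\widehat{G}$-graded algebra whose homogeneous component of degree $\chi$ is exactly the $\chi$-eigenspace for the $G$-action, and by construction the two-cocycle $\sigma$ of $\OO_k(G)$ transports to the two-cocycle of $\widehat{G}$ carrying the same name. Applying Lemma~\ref{lem-homogeneous} to $\widehat{G}$ then gives the result at once. In either approach there is no genuine obstacle; the statement is essentially a repackaging of Lemma~\ref{lem-homogeneous}, the only subtlety being the bookkeeping of the two dual points of view.
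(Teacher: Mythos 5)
Your proposal is correct, and your second approach (transporting the statement through the Fourier isomorphism $\OO_k(G)\cong k[\widehat{G}]$ and invoking Lemma~\ref{lem-homogeneous}) is precisely how the paper obtains this lemma, which is stated there as the ``translation'' of Lemma~\ref{lem-homogeneous} with no separate proof. Your first argument --- expanding $F$ in the idempotents $e_\chi$ and using that $e_\chi$ acts as the projection onto the $\chi$-eigenspace --- is an equally valid, self-contained verification of the same fact; note only that it uses the standing hypotheses of that subsection ($k$ contains a primitive $N$-th root of unity, hence $|G|$ is invertible), which is what guarantees the idempotents exist.
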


\begin{ex}\label{ex-fundamental} The following example is fundamental
in the sense that several of our subsequent examples rely on it.  Assume
that $k$ is a field whose characteristic is different from~$2$.  
Let $V= \Z/2 \times \Z/2$, and consider $V$ as a
two-dimensional vector space over the field~$\mathbb{F}_2$.  Let
$\alpha $ denote the isomorphism of groups
\[ \alpha : \mathbb{F}_2 \stackrel{\cong}{\longrightarrow} \{ -1, 1 \}
\subset k^\times \, .
\]

We define a bicharacter on~$V$ by
\[ 
b(x, y) = \alpha ( \det (x,y) ) \, .
\] 
It is easy to see that $b$ is in fact the \emph{only} non-trivial
bicharacter of~$V$ with values in~$k^\times$.  In particular, $b$ is
invariant under all automorphisms of~$V$.

In the sequel, we shall view~$b$ as a bicharacter on~$\widehat{V}$
(which of course is also isomorphic to~$\Z/2 \times \Z/2$).  The
letter~$F$ will stand for the corresponding twist, or sometimes its
equivalence class in~$\H^2_\ell( \OO_k(V))$.

Let us write down a specific choice of two-cocycle $\sigma$ on
$\widehat{V}$ such that~\eqref{eq-b-from-c} holds for $b_F = b$; 
this will be useful later, but requires some notation.  Let the elements of~$V$ be $1$,
$e_1$, $e_2$ and $e_3 = e_1e_2$, and let us write~$\widehat{e} _i$ for
the character of~$V$ whose kernel is generated by~$e_i$.  Also, let
the group~$\Z/3$ act on~$\widehat{V}$ by permuting cyclically the
three elements~$\widehat{e}_i$.

Now $\sigma$ is uniquely determined by~\eqref{eq-b-from-c} and the requirements 
(i) $\sigma(x, 1) = \sigma(1, x) = 1$ for all $x \in \widehat{V}$, 
(ii) $\sigma(\widehat{e}_i, \widehat{e}_i) = -1$ for $i=1, 2, 3$, 
(iii) $\sigma(\widehat{e}_1, \widehat{e}_2 ) = 1$, and 
(iv) $\sigma$ is $\Z/3$-invariant.  
(So for example, $\sigma(\widehat{e}_2, \widehat{e}_1) = b(\widehat{e}_1, \widehat{e}_2) \,
\sigma(\widehat{e}_1,\widehat{e}_2) = -1$.)

A direct computation shows that $\sigma$ is indeed a two-cocycle. This
particular~$\sigma$ gives us a particular twist~$F$, namely we have
\begin{eqnarray*} 
4F & = & 1 \otimes 1 - (e_1 \otimes e_1 + e_2 \otimes e_2 + e_3 \otimes e_3) \\ 
&& + (1 \otimes e_1 + e_1 \otimes 1) + (1 \otimes e_2 + e_2 \otimes 1) 
+ (1 \otimes e_3 + e_3 \otimes 1) \\
&& + (e_1 \otimes e_2 - e_2 \otimes e_1) + ( e_2 \otimes e_3 - e_3 \otimes e_2) 
+ (e_3 \otimes e_1- e_1 \otimes e_3) \, .
\end{eqnarray*}
 
Now, we may consider this~$F$ as a twist for a larger group
containing~$V$ as a subgroup, such as the alternating group~$A_4$.  By
Property\,(iv) above, $F$ is $\Z/3$-invariant for the obvious action
on~$V$, and it follows that $F$ is~$A_4$-invariant when viewed as a
twist for~$A_4$.  In~\cite[Sect.~7.5]{GK} we have proved that~$F$
represents the only non-trivial element of~$\H^2_\ell(\OO_k( A_4 )) \cong \Z/2$ 
provided $k$ is a field of characteristic~$0$.
\end{ex}

\subsection{The special role played by abelian groups}\label{subsec-abelian-groups-are-enough} 

The reader will notice that most of our examples in the sequel involve abelian
groups.  Of course we have a great control over the twists in the
abelian case, but our selection of examples is not only dictated by this.
Indeed, there are several partial results which indicate that ``most''
twists come from abelian groups anyway.  Here is such a result, which was
implicit in~\cite{GK}.

\begin{prop}\label{prop-FJ} Assume that $k$ is an algebraically closed
field of characteristic~$0$.  If $F \in k[G]\otimes k[G]$ is an
invariant twist for the finite group~$G$, then there exists an abelian
subgroup~$V$ of~$G$ and a twist $J \in k[V] \otimes k[V]$ such
that~$F$ and~$J$ are gauge equivalent.  Moreover, $V$ can be chosen to
be normal in~$G$.
\end{prop}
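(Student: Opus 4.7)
The plan is to invoke the classification of invariant invertible Drinfeld twists on~$k[G]$ in terms of minimal-support data, due to Movshev and Etingof-Gelaki: every gauge-equivalence class of such twists has a well-defined \emph{minimal support}, namely a minimal subgroup $H \leq G$ (unique up to $G$-conjugation) such that some gauge representative of $F$ lies in $k[H]\otimes k[H]$, equipped with a non-degenerate two-cocycle $\omega$ on~$H$ for which the twisted group algebra $k^{\omega}[H]$ is a matrix algebra. The hypothesis that $k$ is algebraically closed of characteristic zero lets me appeal to the Howlett-Isaacs theorem, which guarantees that any such $H$ (a group of central type) is solvable.

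The first step is to verify that the minimal support $H$ is automatically normal in~$G$. Indeed, the invariance of $F$ expressed by $(g\otimes g)\,F\,(g^{-1}\otimes g^{-1}) = F$ for every $g\in G$ forces $F$ to lie in $k[gHg^{-1}]\otimes k[gHg^{-1}]$ for all~$g$, so the minimality of the support yields $gHg^{-1} = H$; the same invariance makes $\omega$ a $G$-invariant class on~$H$. The second step is to replace $H$ by a Lagrangian abelian subgroup $V \leq H$ for the non-degenerate alternating form induced by $\omega$ on the quotient of $H$ by the radical of~$\omega$. The identification of $k^{\omega}[H]$ with the endomorphism algebra of the module induced from $k^{\omega|_V}[V]$ supplies a distinguished gauge element in $k[H]$ that transports $F$ into $k[V]\otimes k[V]$, producing the required twist~$J$.

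The main obstacle is arranging that $V$ be normal in~$G$, equivalently that the chosen Lagrangian be $G$-stable inside the symplectic $G$-module obtained as the quotient of~$H$ by the radical of~$\omega$. An arbitrary symplectic representation of a finite group over a field of characteristic zero need not admit an invariant Lagrangian, so one has to exploit the specific structure of the present situation: the solvability of~$H$ yields a $G$-stable composition series with abelian factors, and the Maschke-type semisimplicity of $G$-modules over~$k$ allows one to build a $G$-invariant Lagrangian inductively along this series, at each stage pairing the next composition factor with its symplectic dual. The resulting $V$ is then an abelian subgroup of~$H$ which is normal in~$G$, completing the reduction.
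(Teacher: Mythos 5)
Your proposal contains a step that is not merely unproved but false: the claim that the minimal support $H$ of the twist can be traded for a Lagrangian subgroup $V\le H$, i.e.\ that some gauge representative of $F$ lies in $k[V]\otimes k[V]$. This contradicts the very minimality of $H$ whenever $V$ is a proper subgroup. The Lagrangian enters Movshev's construction only as the subgroup from which one induces the irreducible projective representation of $k^{\omega}[H]$; the resulting twist still lives in $k[H]\otimes k[H]$ and is in general \emph{not} gauge equivalent to any twist supported on the Lagrangian. The smallest example already kills the step: for $G=H=\Z/2\times\Z/2$ with its non-degenerate cocycle, the twist is non-trivial in $\H^2_\ell$, whereas every twist on the Lagrangian $\Z/2$ is gauge-trivial since the Schur multiplier of a cyclic group vanishes. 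Two further steps are also unsound. First, your normality argument for $H$ assumes that $F$ itself lies in $k[H]\otimes k[H]$; but the minimal support is defined through a gauge representative, which need not be invariant, and is unique only up to conjugation, so invariance of $F$ does not force $gHg^{-1}=H$. Second, the inductive construction of a $G$-invariant Lagrangian from solvability plus ``Maschke-type semisimplicity'' does not work: semisimplicity of $G$-modules over $k$ (characteristic $0$) says nothing about the symplectic $\mathbb{F}_p$-module $H/\mathrm{rad}(\omega)$, and an irreducible symplectic $G$-module of dimension $\ge 2$ admits no invariant Lagrangian at all.

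The paper's actual route avoids all of this by working with the $R$-matrix rather than the twist. Setting $\varphi=F^{-1}$, the element $R_\varphi=\varphi_{21}\varphi^{-1}$ is canonically attached to $F$ (not merely to its gauge class), so its support is a genuine, automatically normal, subgroup; by the results of~\cite{GK} (resting on Etingof--Gelaki's classification of triangular semisimple Hopf algebras) this support is a normal \emph{abelian} subgroup $V$ carrying a non-degenerate alternating bicharacter $b$. One then builds a twist $J$ on all of $V$ (not on a Lagrangian) realizing $b$, observes that $J\varphi$ is a symmetric twist, and invokes~\cite[Cor.~3]{EG} to conclude that a symmetric twist is gauge-trivial, whence $F$ and $J$ are gauge equivalent. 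Note that a corollary of the statement is that the minimal support of an invariant twist is abelian, so the non-abelian central-type machinery and the Howlett--Isaacs theorem you invoke never need to appear. If you want to salvage your approach, you must replace the Lagrangian reduction by the passage to the support of $F_{21}F^{-1}$ and supply the symmetric-twist triviality argument to compare $F$ with the twist constructed on that support.
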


\begin{proof} 
Let $\varphi = F^{-1}$. Since $F$ is invariant, $\varphi$ is also a twist.  
Now consider $R_\varphi = \varphi_{21}\varphi^{-1}$ (here $\varphi_{21} = \tau (\varphi)$, 
where $\tau : k[G]\otimes k[G] \to k[G] \otimes k[G]$ is the flip).  
By~\cite[Sect.~4]{GK}, there is a normal abelian subgroup $V\subset G$
such that $R_\varphi \in k[V]\otimes k[V]$.

If we apply the Fourier transform as in Section~\ref{subsec-twists-for-abelian-groups}, 
we see that~$R_\varphi$ gives an alternating bilinear form~$b$
on~$\widehat{V}$ (this follows from a direct computation, 
which is elucidated in~\cite[Sect.~4.3]{GK}).  
By the results in Section~\ref{subsec-twists-for-abelian-groups}, 
we must have $b = b_{J'}$ for some twist $J' \in k[V] \otimes k[V]$.  
Let us prove that $F$ is gauge equivalent to $J = (J')^{-1}$.  
Note that~$J$ is a twist, since~$V$ is abelian.

Put $f = J \varphi = (J')^{-1} \varphi$; it is the product of the twist~$J$ by the 
\emph{invariant} twist~$\varphi$, and as a result, $f$~itself must be a twist.  
Moreover, we have $R_{J'} = J'_{21} (J')^{-1} = R_\varphi$ by definition, 
so that $f_{21} = f$: the twist~$f$ is symmetric.  
Now by~\cite[Cor.~3]{EG}, $f$ is gauge equivalent to $1 \otimes 1$, 
which means that there is $a \in k[G]^\times$ such that
\[ 
f = (a\otimes a) \Delta (a^{-1}) \, .
\] 
In view of the invariance of~$F$, it follows readily that
\[ 
J = fF = (a\otimes a) \Delta(a^{-1}) F = (a\otimes a) F
\Delta(a^{-1}) \, ,
\] 
which shows that $F$ and $J$ are gauge equivalent.
\end{proof}

Proposition~\ref{prop-FJ} implies that $A_F$ is isomorphic as an algebra to~$A_J$, 
and thus all the algebras obtained by our method using an invariant twist~$F$ on a finite group~$G$ 
can also be constructed using an abelian group. 
Note that the proposition does \emph{not} state that $J$ is invariant 
(so that~$A_J$ is not naturally endowed with a~$G$-algebra structure); 
and even if it were, it need not represent the same element of~$\H^2_\ell( \OO_k(G) )$ 
as~$F$ (in this case the~$G$-actions on~$A_F$ and~$A_J$ differ by an automorphism
of~$G$ as follows from~\eqref{alg-isom}).

In~\cite{GK} we have studied in detail the question of finding an invariant twist~$J$ 
lying in $k[V]\otimes k[V]$, where $V$ is a normal abelian subgroup of~$G$, 
such that $J$ and~$F$ represent the same element of~$\H^2_\ell(\OO_k(G) )$.  
While there certainly are cases where this is not possible, the counter-examples are rather exotic.  
For instance, when $G$ has odd order and only trivial conjugacy-preserving outer
automorphisms, such an invariant twist~$J$ can always be found.

\section{The presentation theorem \& examples}\label{sec-presentation}

Let $H$ be a Hopf algebra, $\sigma : H \times H \to k$ a two-cocycle, and
$A$ a left $H$-comodule algebra. 
The aim of this section is to present the twisted algebra~${}_{\sigma}A$ by generators and relations
when $A$ is given by such a presentation.

We start with the general case before specializing to the cases $H = k[G]$ and $H = \OO_k(G)$.

\subsection{The general case}\label{subsec-presentation}

Let $H$ be a Hopf algebra. 
By saying that $P$ is an $H$-\emph{subcomodule} of a left $H$-comodule~$Q$, 
we mean that the image of~$P$ by the structure map $Q \to H \otimes Q$ on~$Q$ 
sits inside the natural image of~$H \otimes P$.
In this case there is an induced linear map $Q/P \to H \otimes Q/P$ 
turning~$Q/P$ into a quotient $H$-comodule of~$Q$.

Fix a left $H$-comodule $M$. 
Then the tensor algebra $T(M)$ on~$M$ naturally forms a left $H$-comodule algebra. 
Let $R \subset T(M)$ be an $H$-subcomodule and let $\mathfrak{a} = (R)$ 
denote the ideal of~$T(M)$ generated by~$R$. Then $\mathfrak{a}$ as well is an $H$-subcomodule,
so that the quotient algebra $T(M)/\mathfrak{a}$ is a left $H$-comodule algebra.

Let $\sigma : H \times H \to k$ be a two-cocycle of $H$. 
Then we have the twisted algebras ${}_{\sigma}T(M)$ and ${}_{\sigma}(T(M)/\mathfrak{a})$.

\begin{lem}\label{TVlem} 
Regarded as a $k$-submodule of~${}_{\sigma}T(M)$, 
$\mathfrak{a}$ is again an ideal, and is generated by~$R$. Moreover,
\[
{}_{\sigma}T(M)/\mathfrak{a} = {}_{\sigma}(T(M)/\mathfrak{a}) \, .
\]
\end{lem}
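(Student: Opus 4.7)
The plan rests on a simple duality between the original product and the twisted product on $T(M)$. Using the coaction $\sum_{(a)} a_{-1} \otimes a_0$ and the identity $\sigma * \sigma^{-1} = \eps \otimes \eps$ in the convolution algebra of bilinear forms on $H$, I would first verify the inversion formula
\[
ab \,=\, \sum_{(a),(b)} \sigma^{-1}(a_{-1}, b_{-1}) \; a_0 *_\sigma b_0
\qquad (a, b \in T(M)).
\]
This is obtained by expanding the right-hand side via the definition of $*_\sigma$, applying coassociativity of the coaction to regroup the iterated Sweedler components, and then collapsing $\sigma^{-1} * \sigma$ to the counit on $H \otimes H$. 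This step, essentially Sweedler-index bookkeeping, is the only place where any real care is needed.

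The first claim of the lemma now splits into two symmetric inclusions. On the one hand, for $a \in T(M)$ and $r \in R$,
\[
a *_\sigma r \,=\, \sum_{(a),(r)} \sigma(a_{-1}, r_{-1}) \; a_0 \, r_0 ;
\]
since $R$ is an $H$-subcomodule we have $r_0 \in R$, so each summand lies in $\mathfrak{a}$ and hence $a *_\sigma r \in \mathfrak{a}$. The symmetric argument on the other side shows that $\mathfrak{a}$ is a two-sided ideal in ${}_\sigma T(M)$, and thus the $*_\sigma$-ideal $\mathfrak{a}'$ generated by $R$ satisfies $\mathfrak{a}' \subseteq \mathfrak{a}$. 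Conversely, the inversion formula applied with $b = r \in R$ expresses the plain product $ar$ as a linear combination of elements $a_0 *_\sigma r_0 \in \mathfrak{a}'$, which gives the reverse inclusion $\mathfrak{a} \subseteq \mathfrak{a}'$.

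The identification ${}_\sigma T(M)/\mathfrak{a} = {}_\sigma(T(M)/\mathfrak{a})$ is then formal: both sides are the same $k$-module $T(M)/\mathfrak{a}$, both inherit the same $H$-coaction from $T(M)$ (since $\mathfrak{a}$ is an $H$-subcomodule by construction), and the twisted multiplication on each side is computed by the same descended formula $\bar a *_\sigma \bar b = \sum \sigma(a_{-1}, b_{-1}) \, \overline{a_0 \, b_0}$.
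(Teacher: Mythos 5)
Your proposal is correct and follows essentially the same route as the paper: the key point in both is the inversion formula $ab = \sum_{(a),(b)} \sigma^{-1}(a_{-1}, b_{-1})\, a_0 *_\sigma b_0$, which together with the defining formula for $*_\sigma$ yields the two inclusions between the ordinary ideal and the $*_\sigma$-ideal generated by $R$, the final identification being formal. The only cosmetic difference is that the paper phrases the first inclusion for an arbitrary $H$-subcomodule $L$ (implicitly using that the ideal generated by $L$ is again a subcomodule, just as you implicitly use that $\mathfrak{a}$ is one when passing from products with $r \in R$ to products with general elements of $\mathfrak{a}$).
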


\pf 
In general, let $A$ be a left $H$-comodule algebra and $L \subset A$ an $H$-subcomodule.
We see from~\eqref{twistedproduct-of-A} 
that the right (or left or two-sided) ideal of~${}_{\sigma}A$ generated by~$L$ is
included in the ideal of $A$ generated by~$L$. 
The converse inclusion follows from the equation 
\[
a \, b = \sum_{(a),(b)} \, \sigma^{-1}(a_{-1} , b_{-1}) \, a_{0} *_{\sigma} b_{0} \, ,
\] 
so that the two ideals coincide.
This proves the first half of the lemma. The remaining equality is easy to prove.  
\epf

Recall from \eqref{monoidstruc1} the isomorphism 
$\xi : {}_{\sigma}M \otimes {}_{\sigma}N \overset{\cong}{\longrightarrow}
 {}_{\sigma}(M \otimes N)$.  
 For each $r \geq 2$, let
\[
\xi_r : ({}_{\sigma}M)^{\otimes r} \overset{\cong}{\longrightarrow} {}_{\sigma}(M^{\otimes r})
\]
denote the $r-1$ times iterated operation of $\xi$ on $M^{\otimes r}$. 
Explicitly, an $r$-tensor $x_1 \otimes x_2 \otimes \cdots \otimes x_r$
in~$({}_{\sigma}M)^{\otimes r}$ is sent by $\xi_r$ to the twisted
product $x_1 *_{\sigma} x_2 *_{\sigma} \cdots *_{\sigma} x_r$ 
in~${}_{\sigma}T(M)$. 
For $r = 0, 1$, let $\xi_0 : k \to {}_{\sigma}k,\ \xi_0(1) = \sigma(1, 1)^{-1}$ 
denote the isomorphism~$\eta$ defined by~\eqref{monoidstruc2}, 
and $\xi_1 : {}_{\sigma}M \to {}_{\sigma}M$ the identity map~$\mathrm{id}_M$. 
Let
\begin{equation}\label{eqhinfty} \xi_{\infty} = \bigoplus_{r \geq 0}
\, \xi_r : T({}_{\sigma}M) \overset{\cong}{\longrightarrow} {}_{\sigma}T(M)
\end{equation} denote the direct sum of all $\xi_r$. The explicit
description of $\xi_r$ above proves the following lemma.

\begin{lem}\label{hinfty} 
The map $\xi_{\infty}$ is an isomorphism of left ${}_{\sigma}H_{\sigma^{-1}}$-comodule algebras.
\end{lem}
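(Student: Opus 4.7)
The plan is to check three things in turn: that $\xi_\infty$ is a $k$-linear isomorphism, that it is a morphism of left $H'$-comodules (where $H' = {}_{\sigma}H_{\sigma^{-1}}$), and that it is an algebra map. The first two are immediate consequences of the monoidal isomorphism ${}^H\mathcal{M} \overset{\cong}{\longrightarrow} {}^{H'}\mathcal{M}$, $M \mapsto {}_\sigma M$, recalled in Section~\ref{subsec-twist-Hopf}: each $\xi_r$ (for $r \geq 2$) is a composite of instances of the structural isomorphism $\xi$, while $\xi_0 = \eta$ and $\xi_1 = \mathrm{id}$; so every $\xi_r$ is an isomorphism in ${}^{H'}\mathcal{M}$, and the direct sum $\xi_\infty$ is an isomorphism of $H'$-comodules, since both $T({}_\sigma M)$ and ${}_\sigma T(M)$ carry the corresponding direct-sum comodule structure on their homogeneous components.

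For the algebra structure, one just unpacks the definitions. The product on $T({}_\sigma M)$ is concatenation of tensors, while the product on ${}_\sigma T(M)$ is the twisted product $*_\sigma$ of~\eqref{twistedproduct-of-A}. By the explicit description of $\xi_r$, a tensor $u = x_1 \otimes \cdots \otimes x_r$ in $({}_\sigma M)^{\otimes r}$ is sent to $x_1 *_\sigma x_2 *_\sigma \cdots *_\sigma x_r$. Given such a $u$ and $v = y_1 \otimes \cdots \otimes y_s$, the concatenation $u \otimes v$ is sent by $\xi_{r+s}$ to $x_1 *_\sigma \cdots *_\sigma x_r *_\sigma y_1 *_\sigma \cdots *_\sigma y_s$, which by associativity of $*_\sigma$ equals $\xi_r(u) *_\sigma \xi_s(v)$. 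Extending by linearity gives multiplicativity. As for the unit, the element $1 \in k$ of $T({}_\sigma M)$ is sent by $\xi_0$ to $\sigma(1,1)^{-1}$, which is the unit of ${}_\sigma T(M)$ for the product~\eqref{twistedproduct-of-A} (using the standard identity $\sigma(1, x) = \sigma(1,1) \, \varepsilon(x)$, which is a direct consequence of the cocycle equation~\eqref{2cocycle}).

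The only mildly delicate point is the well-definedness of $\xi_r$ as the ``$r-1$ times iterated operation of $\xi$'': a priori, iterating $\xi$ across a tensor of length $r$ could be done in several orders. That the various orderings agree is the pentagon coherence for the monoidal functor $M \mapsto {}_\sigma M$, which combinatorially is just the two-cocycle identity~\eqref{2cocycle}; that same identity is what makes $*_\sigma$ associative on ${}_\sigma T(M)$, so the computation above is well-posed. Once this is observed, no further calculation is required, which is why the authors remark that the explicit description of $\xi_r$ above already proves the lemma.
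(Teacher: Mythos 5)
Your proof is correct and follows essentially the same route as the paper, which simply observes that the explicit description of $\xi_r$ (sending $x_1\otimes\cdots\otimes x_r$ to $x_1 *_\sigma \cdots *_\sigma x_r$) makes multiplicativity, the unit condition, and $H'$-colinearity immediate. You have merely spelled out the details the authors leave implicit, including the harmless coherence point about the order of iterating $\xi$, which is settled by the cocycle identity exactly as you say.
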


From the two lemmas above we now deduce the following Presentation Theorem.

\begin{thm}[Presentation Theorem, General Version]\label{presentation}
The natural map ${}_{\sigma}M \to {}_{\sigma}(T(M)/(R))$ induces an
isomorphism
\[
T({}_{\sigma}M)/(\xi_{\infty}^{-1}(R)) \overset{\cong}{\longrightarrow} {}_{\sigma}(T(M)/(R))
\]
of left ${}_{\sigma}H_{\sigma^{-1}}$-comodule algebras.
\end{thm}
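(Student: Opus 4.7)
The plan is to assemble Theorem \ref{presentation} from Lemmas \ref{TVlem} and \ref{hinfty} with essentially no additional combinatorial work. First I would unpack the right-hand side: by Lemma \ref{TVlem} applied to $A = T(M)$ and $L = R$, the ideal $\mathfrak{a} = (R)$ of $T(M)$ coincides, as a subset, with the ideal of ${}_{\sigma}T(M)$ generated by~$R$, and moreover ${}_{\sigma}(T(M)/\mathfrak{a}) = {}_{\sigma}T(M)/\mathfrak{a}$ as left $H'$-comodule algebras, where $H' = {}_{\sigma}H_{\sigma^{-1}}$. In particular the target of the asserted isomorphism is already presented as a quotient of the twisted tensor algebra ${}_{\sigma}T(M)$.

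Next I would transport this description across the algebra isomorphism $\xi_{\infty} : T({}_{\sigma}M) \overset{\cong}{\longrightarrow} {}_{\sigma}T(M)$ of Lemma \ref{hinfty}. Because $\xi_{\infty}$ is an isomorphism of $H'$-comodule algebras, the two-sided ideal of ${}_{\sigma}T(M)$ generated by~$R$ is carried to the two-sided ideal of $T({}_{\sigma}M)$ generated by $\xi_{\infty}^{-1}(R)$, and the latter set is an $H'$-subcomodule because $\xi_{\infty}$ is $H'$-colinear. Taking quotients, I obtain an induced isomorphism
\[
T({}_{\sigma}M)/(\xi_{\infty}^{-1}(R)) \overset{\cong}{\longrightarrow} {}_{\sigma}T(M)/(R) = {}_{\sigma}(T(M)/(R))
\]
of left $H'$-comodule algebras. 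To finish, I would check that this composite is indeed induced by the natural map ${}_{\sigma}M \to {}_{\sigma}(T(M)/(R))$. Since $\xi_1 = \mathrm{id}_M$ on the degree-one component, $\xi_{\infty}$ acts as the identity on $M$, and thus the composite restricts on generators to the canonical inclusion followed by the quotient map, as required.

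The main point to get right, rather than a real obstacle, is the compatibility of the two a priori distinct $H'$-comodule algebra structures on the quotient: the one obtained by first twisting and then quotienting (namely ${}_{\sigma}T(M)/\mathfrak{a}$) and the one obtained by first quotienting and then twisting (namely ${}_{\sigma}(T(M)/\mathfrak{a})$). This identification is precisely the content of the second assertion of Lemma \ref{TVlem}, which is why that lemma is doing the essential work. Beyond this bookkeeping, the proof is purely formal: $\xi_{\infty}$ plays the role of a change of variables that converts $*_{\sigma}$-multiplication into ordinary multiplication, and the theorem records the fact that presentations by generators and relations are preserved under this change.
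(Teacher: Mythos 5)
Your proof is correct and follows exactly the paper's own two-step argument: Lemma~\ref{TVlem} identifies ${}_{\sigma}(T(M)/(R))$ with ${}_{\sigma}T(M)/(R)$, and the comodule-algebra isomorphism $\xi_{\infty}$ of Lemma~\ref{hinfty} carries the ideal $(\xi_{\infty}^{-1}(R))$ onto $(R)$, yielding the isomorphism on quotients. Your additional check that the map restricts to the natural one on $M$ via $\xi_1=\mathrm{id}_M$ is a sensible (and correct) piece of bookkeeping that the paper leaves implicit.
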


\pf 
Notice from Lemma~\ref{TVlem} that 
${}_{\sigma}T(M)/(R) = {}_{\sigma}(T(M)/(R))$.  
Compose this equality with the isomorphism
\[
T({}_{\sigma}M)/(\xi_{\infty}^{-1}(R)) \overset{\cong}{\longrightarrow} {}_{\sigma}T(M)/(R)
\]
obtained from Lemma~\ref{hinfty}. Then the theorem follows.  
\epf

To apply this theorem in practice, the following result is needed.

\begin{prop}\label{hinftyinverse} 
Assume that the two-cocycle~$\sigma$ of~$H$ is lazy.  
Regard $\xi_{\infty}$ as a linear isomorphism $T(M) \overset{\cong}{\longrightarrow} T(M)$.  
Then the inverse $\xi_{\infty}^{-1}$ of~$\xi_{\infty}$ is given by
\[
\xi_{\infty}^{-1}(x_1 \otimes x_2 \otimes ... \otimes x_r) 
= x_1 *_{\sigma^{-1}} x_2 *_{\sigma^{-1}} \cdots *_{\sigma^{-1}} x_r \, ,
\]
the twisted product in ${}_{\sigma^{-1}}T(M)$, and by $\xi_{\infty}^{-1}(1)= \sigma(1, 1)$.
\end{prop}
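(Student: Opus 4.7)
The plan is to recognize the candidate inverse as the very map produced by Lemma \ref{hinfty} for the cocycle $\sigma^{-1}$, and to leverage the composition of twistings provided by \eqref{twist-twice} to obtain both one-sided identities without any direct bookkeeping of iterated coactions.

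Write $\psi$ for the $k$-linear map defined by the formula in the statement. Since $\sigma$ is lazy, so is $\sigma^{-1}$, and in particular ${}_\sigma H_{\sigma^{-1}} = H$ as Hopf algebras, so ${}_{\sigma^{-1}} M = M$ as $H$-comodules. Applying Lemma \ref{hinfty} to $\sigma^{-1}$ (in place of $\sigma$) and the same comodule $M$ identifies $\psi$ with the algebra isomorphism $T(M) \overset{\cong}{\longrightarrow} {}_{\sigma^{-1}} T(M)$ that the lemma produces; in particular the value of $\psi$ on $x_1 \otimes \cdots \otimes x_r$ is the iterated $*_{\sigma^{-1}}$-product as stated, and the values for $r = 0, 1$ match.

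Next, apply the twisting functor ${}_\sigma$ to this isomorphism: this preserves its underlying $k$-linear map while transforming the source $T(M)$ into ${}_\sigma T(M)$ and the target ${}_{\sigma^{-1}} T(M)$ into ${}_\sigma \big({}_{\sigma^{-1}} T(M)\big) = T(M)$ by \eqref{twist-twice}. Thus the same $k$-linear $\psi$ becomes an algebra isomorphism ${}_\sigma T(M) \overset{\cong}{\longrightarrow} T(M)$. The composition
\[
T(M) \overset{\xi_\infty}{\longrightarrow} {}_\sigma T(M) \overset{\psi}{\longrightarrow} T(M)
\]
is therefore an algebra endomorphism of the free tensor algebra $T(M)$; since both $\xi_\infty$ and $\psi$ restrict to $\mathrm{id}_M$ on the degree-one subspace $M$, so does the composition, and the universal property of the tensor algebra forces $\psi \circ \xi_\infty = \mathrm{id}_{T(M)}$.

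A symmetric argument, twisting $\xi_\infty$ by $\sigma^{-1}$ to obtain an algebra isomorphism ${}_{\sigma^{-1}} T(M) \overset{\cong}{\longrightarrow} T(M)$ and composing in the opposite order, yields $\xi_\infty \circ \psi = \mathrm{id}_{T(M)}$. The one subtle point is the invocation of \eqref{twist-twice} to collapse the doubly-twisted algebras back to $T(M)$ as $H$-comodule algebras; the laziness hypothesis is indispensable here, since without it one only has an isomorphism of $k$-modules rather than an equality of $H$-comodule algebras, and the universal-property argument would no longer apply.
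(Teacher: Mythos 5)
Your proof is correct, but it takes a genuinely different route from the paper's. The paper argues by direct computation: using the laziness of $\sigma$ it proves, by induction on $r$, an identity for iterated cocycle factors which shows that $\xi_r$ can be rewritten as a composition of the maps $\xi_{-,-}$ applied in the reverse (outermost-first) order; inverting each factor $\xi_{M',M''}$, whose inverse visibly involves $\sigma^{-1}$, then yields the stated formula. You instead avoid all bookkeeping of iterated coactions: you observe that $\sigma^{-1}$ is again a lazy two-cocycle of $H$ (this is exactly where laziness enters -- otherwise $\sigma^{-1}$ is only a cocycle of ${}_\sigma H_{\sigma^{-1}}$), apply Lemma~\ref{hinfty} to $\sigma^{-1}$ to recognize the candidate inverse $\psi$ as an algebra isomorphism $T(M)\to{}_{\sigma^{-1}}T(M)$, use the monoidality of the twisting functor on morphisms together with \eqref{twist-twice} to promote $\psi$ and $\xi_\infty$ to algebra maps in the right directions, and conclude by the universal property of the tensor algebra since both composites restrict to $\mathrm{id}_M$ in degree one. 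Each approach has its merits: the paper's computation is self-contained and shows concretely how laziness permits the re-ordering of the scalar factors, while yours is cleaner and makes the statement appear as a formal consequence of the symmetry $\sigma\leftrightarrow\sigma^{-1}$, at the cost of leaning on the (easy but not entirely free) facts that the twisting functor is the identity on underlying linear maps of comodule algebra morphisms and that \eqref{twist-twice} holds at the level of comodule algebras. Your closing remark correctly locates the role of the laziness hypothesis.
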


\pf 
Fix an integer $r > 0$, and choose arbitrarily $r$ elements $x, y, z,\ldots, v, w$ of~$H$.  
By induction on~$r$, we see from the laziness
assumption that
\begin{multline*}
\sum_{(x),(y),\ldots,(v)} \sigma(x, y)\, \sigma(xy, z)\cdots\sigma(xyz\cdots v, w) \\
= \sum_{(x),(y),\ldots,(v)} \sigma(xyz\cdots v, w)\cdots\sigma(xy, z)\, \sigma(x,y) \, .
\end{multline*}
Here we have omitted the subscripts $1, 2,...$, which should be
added to each of $x, y,\ldots, v$ in correct order, i.e., from the left to the right. 
For example, when $r = 4$, the last equation implies that
\[
\xi_4 = (\xi_{M,M} \otimes \mathrm{id}_M \otimes \mathrm{id}_M) 
\circ (\xi_{M \otimes M, M} \otimes \mathrm{id}_M)
\circ \xi_{M \otimes M \otimes M, M} \, .
\] 
This equation can be generalized to any~$r$ in the obvious manner, and the generalized
equation implies the proposition.  
\epf

\subsection{The group algebra case; the quantum affine spaces and tori} 

Assume that $H = k[G]$ is the algebra of a group~$G$. 
Let $\sigma : G \times G \to k^{\times}$ be a group two-cocycle.

Our setup is now that of an algebra
\[ 
A = k\langle x_1, \ldots, x_n \rangle /(p_i)_{i \in I}
\]
which is $G$-graded, generated by homogeneous elements $x_j$ ($1 \leq j \leq n$) 
of degree~$g_j \in G$, satisfying the relations $p_i = 0$ ($i \in I$).

We shall adopt a notational convention, here and in the rest of the
paper. The generator~$x_i$, when seen as an element of~${}_\sigma A$,
will be written~$X_i$. Words in capital letters will always refer to
the twisted multiplication in~${}_\sigma A$, so for example~$X_i X_j$
always means~$x_i *_\sigma x_j$. Note that, on the other hand, we
do not distinguish between~$x_i \in k\langle x_1, \ldots , x_n
\rangle$ and the element~$x_i \in A$ (and likewise for~$X_i$).

Given an integer $r > 1$ and an $r$-tuple $(i_1, \ldots, i_r)$ of integers $1 \leq i_s \leq n$, 
define a scalar $\kappa_{i_1, \ldots, i_r}$ by
\[ 
\kappa_{i_1, \ldots, i_r} = \sigma(g_{i_1}, g_{i_2})^{-1}\sigma(g_{i_1}g_{i_2}, g_{i_3})^{-1} \cdots \,
\sigma(g_{i_1}g_{i_2} \cdots g_{i_{r-1}}, g_{i_r})^{-1} \, .
\] 
Then we have a linear isomorphism 
$\kappa : k\langle x_1, \ldots, x_n \rangle \overset{\cong}{\longrightarrow} 
k\langle X_1, \ldots, X_n \rangle$ given by
\begin{eqnarray*} 
&\kappa(x_{i_1}x_{i_2} \ldots x_{i_r}) =
\kappa_{i_1, i_2, \ldots, i_r} \, X_{i_1}X_{i_2} \ldots X_{i_r} \, , \\ 
&\kappa(x_j) = X_j \, , \quad \kappa(1) = \sigma(1, 1) \, .
\end{eqnarray*}

The following is a consequence of Theorem~\ref{presentation}, Proposition~\ref{hinftyinverse}, 
and Lemma~\ref{lem-eigenvectors}.

\begin{thm}[Presentation Theorem, Group Algebra Case]\label{thm-presentation-group-algebra-case} 
The left $H$-comodule algebra~$A$ is twisted by the two-cocycle~$\sigma$, so that
\[ 
{}_{\sigma}A = k\langle X_1, \ldots, X_n \rangle /(\kappa(p_i))_{i \in I} \, .
\] 
If $G$ is abelian and if the commutativity relation $x_ix_j = x_jx_i$ ($i \neq j$) holds in~$A$,
then in~${}_{\sigma}A$ we have
\begin{equation*}\label{twisted-commutativity-realation} 
X_iX_j = b_{\sigma}(g_j, g_i) \, X_jX_i \, ,
\end{equation*} 
where $b_{\sigma}$ is as given by~\eqref{b_sigma}.
\end{thm}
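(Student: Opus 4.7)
The plan is to derive this theorem as a direct specialization of Theorem~\ref{presentation}, together with Proposition~\ref{hinftyinverse} and Lemma~\ref{lem-homogeneous}. Since $k[G]$ is cocommutative, any two-cocycle $\sigma$ on it is automatically lazy (the laziness condition reduces to a tautology on group-like elements), so Proposition~\ref{hinftyinverse} is available. Take $M \subset A$ to be the $k[G]$-subcomodule $\bigoplus_{j=1}^n k\, x_j$, which is a subcomodule precisely because each $x_j$ is homogeneous; let $R \subset T(M)$ be the $k[G]$-subcomodule spanned by the relations $p_i$ (which is a subcomodule provided each $p_i$ is itself homogeneous, or after replacing each $p_i$ by its homogeneous components, which does not change the ideal). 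Then $A = T(M)/(R)$ as $k[G]$-comodule algebras, and Theorem~\ref{presentation} gives the isomorphism $T({}_\sigma M)/(\xi_\infty^{-1}(R)) \cong {}_\sigma A$. Under the identification ${}_\sigma M = M$ as $k$-modules, we have $T({}_\sigma M) = k\langle X_1, \ldots, X_n \rangle$, and it remains only to identify the image of the relations.

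First I would compute $\xi_\infty^{-1}$ on pure tensors of the generators. By Proposition~\ref{hinftyinverse},
\[
\xi_\infty^{-1}(x_{i_1} \otimes x_{i_2} \otimes \cdots \otimes x_{i_r}) = x_{i_1} *_{\sigma^{-1}} x_{i_2} *_{\sigma^{-1}} \cdots *_{\sigma^{-1}} x_{i_r}.
\]
Each partial product $x_{i_1}\cdots x_{i_s}$ in $A$ is homogeneous of degree $g_{i_1}\cdots g_{i_s}$, so iterating Lemma~\ref{lem-homogeneous} (applied to the twist $\sigma^{-1}$, using that the group cocycle value on $G$ is $\sigma^{-1}(g,h) = \sigma(g,h)^{-1}$) gives exactly
\[
\xi_\infty^{-1}(x_{i_1} \otimes \cdots \otimes x_{i_r}) = \kappa_{i_1,\ldots,i_r} \, x_{i_1}\cdots x_{i_r}.
\]
This is, after the identification of tensors with words in the $X_i$, precisely the map $\kappa$ defined above. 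Hence $\xi_\infty^{-1}(R) = \kappa(R)$, which proves the first assertion.

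For the commutativity relations, the cleanest route is Lemma~\ref{lem-eigenvectors} (equivalently Lemma~\ref{lem-homogeneous}) applied twice: if $x_ix_j = x_jx_i$ in $A$, then
\[
X_iX_j = x_i *_\sigma x_j = \sigma(g_i, g_j)\, x_ix_j = \sigma(g_i, g_j)\, x_jx_i = \frac{\sigma(g_i, g_j)}{\sigma(g_j, g_i)} \, X_jX_i,
\]
and the prefactor is $b_\sigma(g_j, g_i)$ by the definition~\eqref{b_sigma}. Alternatively one can cite Corollary~\ref{coro-comm-gives-braided-comm} to get the same conclusion from braided-commutativity in $\MGb$. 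There is no real obstacle here: the only step requiring care is the inductive bookkeeping needed to extract the $\kappa_{i_1,\ldots,i_r}$ scalars from iterating Lemma~\ref{lem-homogeneous}, which is precisely the content of the induction already performed inside the proof of Proposition~\ref{hinftyinverse}.
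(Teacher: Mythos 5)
Your proposal is correct and follows exactly the route the paper intends: the paper gives no written proof, merely asserting that the theorem is a consequence of Theorem~\ref{presentation}, Proposition~\ref{hinftyinverse} and the homogeneity lemma, and you supply precisely that derivation, including the necessary (and implicitly assumed) observation that every two-cocycle of the cocommutative Hopf algebra $k[G]$ is lazy, so that Proposition~\ref{hinftyinverse} applies and $\xi_\infty^{-1}$ coincides with $\kappa$ on words in the generators. The final computation $X_iX_j=\sigma(g_i,g_j)\,x_ix_j=b_\sigma(g_j,g_i)\,X_jX_i$ is also the intended one.
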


\begin{ex}\label{quantum affine space} 
As an application of Theorem~\ref{thm-presentation-group-algebra-case}, we show that the so-called 
``quantum affine spaces'' and ``quantum tori'', which are ubiquitous in quantum group theory
(see e.g.~\cite{Ta, Wa}), can be obtained as twists of the classical affine spaces and tori.

In what follows, let $n$ be an integer $\geq 1$, and assume that $G= \Z^n$ 
with canonical basis $(e_1, e_2, \ldots, e_n)$.  
We write $\d e_i: G \to \Z$ for the $i$-th~projection and consider
\[ 
\omega = - \sum_{i < j} \alpha_{ij} \, \d e_i \wedge \d e_j \, ,
\] 
where $\alpha_{ij}\in \Z$.  
Thus $\omega : G\times G \to \Z$ is an alternating bilinear form over~$\Z$.

Pick arbitrarily an element $q \in k^{\times}$.  We set
\begin{equation}\label{bicharZr} 
b(x,y)= q^{\omega(x,y)}
\end{equation} 
for $x,y\in G$, thus obtaining the alternating bicharacter $G\times G \to k^\times$ 
that takes the values $b(e_i, e_j)= q^{-\alpha_{ij}}$.  
Choose a group two-cocycle
$\sigma : G \times G \to k^{\times}$ such that~$b = b_{\sigma}$.

Regard the commutative polynomial algebra $A = k[x_1, \ldots, x_n]$ as
a $G$-graded algebra by declaring each generator~$x_i$ to be homogeneous
of degree~$e_i$. We conclude from Theorem~\ref{thm-presentation-group-algebra-case} that
\[ 
{}_{\sigma}A = k\langle X_1, \ldots, X_n \rangle /(X_iX_j - q^{\alpha_{ij}}X_jX_i)_{i < j} \, .
\] 
Thus, ${}_{\sigma}A$ turns out to be a quantum affine space.
\end{ex}

\begin{ex}\label{quantum torus} 
Regard the coordinate ring of the torus
\[ 
B = k[x_1, \ldots, x_n, y_1, \ldots, y_n]/(x_iy_i - 1)_{1 \leq i \leq n}
\] 
as a $G$-graded algebra by declaring each~$x_i$ (resp.\ each~$y_i$)
to be homogeneous of degree~$e_i$ (resp.\ of degree~$e_i^{-1}$).  
As above, we conclude that
\[ 
{}_{\sigma}B = k\langle X_1, \ldots, X_n, Y_1, \ldots, Y_n \rangle /\mathfrak{a} \, ,
\] 
where $\mathfrak{a}$ is the two-sided ideal generated by
\[ 
X_iX_j - q^{\alpha_{ij}} X_j X_i \, , \quad Y_iY_j - q^{\alpha_{ij}} Y_j Y_i \, , \quad X_iY_j - q^{-\alpha_{ij}} Y_jX_i\, , 
\qquad (i<j)
\]
$X_iY_i - Y_iX_i$ and finally $X_i Y_i - \sigma(e_i, e_i^{-1}) \, \sigma(1, 1)$.
Whatever the precise value of~$\sigma(e_i, e_i^{-1}) \, \sigma(1, 1)$ is, we have
\[ 
{}_{\sigma}B = 
k\langle X_1^{\pm 1}, \ldots, X_n^{\pm 1}\rangle /(X_iX_j - q^{\alpha_{ij}} X_j X_i)_{i < j} \, .
\] 
This is a quantum torus.
\end{ex}

\subsection{The case of~$\OO_k(G)$; the quantum tetrahedron}

We now turn to the case of the Hopf algebra~$\OO_k(G)$
(all subsequent examples of comodule algebras will be over this Hopf algebra).
Since we aim at explicit computations, it is tempting to restrict attention to abelian groups;
however, we can work in the following slightly more general setting at virtually no cost.

Our setup features an abelian normal subgroup~$V$ of~$G$. 
Next, we let~$F$ be an invariant twist for $G$ such that $F\in k[V]\otimes k[V]$, 
a condition that is easier to state than with the corresponding two-cocycle~$\sigma $. 
Finally, the alternating bicharacter for~$\widehat{V}$ as in
Section~\ref{subsec-twists-for-abelian-groups} will be denoted simply by~$b$. 
The case $G = V$ is typical, and we advise the reader to keep this particular case in mind%
---in general, the only difference is that, when we are done with twisting an algebra, 
the outcome will be a $G$-algebra and not just a $V$-algebra. 
All the other computations performed when finding a presentation only involve the subgroup~$V$.

The type of algebra which occurs in this variant of the Presentation Theorem is one of the form
\[ 
A = k\langle x_1, \ldots, x_n \rangle /(p_i)_{i \in I} \, ,
\]
where each generator~$x_i$ is an eigenvector for~$V$ with character~$\chi_i$. 
We remark that one can always find such a presentation for a $G$-algebra~$A$, 
provided that it is finitely generated as an algebra and that~$k$ is large enough a field; 
any finite set of generators is included in a finite-dimensional $k[G]$-submodule, 
which is necessarily a sum of one-dimensional submodules over the split
commutative semisimple algebra~$k[V]$.

Given an integer $r > 1$ and an $r$-tuple $(i_1, \ldots, i_r)$ of
integers $1 \leq i_s \leq n$, define a scalar~$\kappa_{i_1, \ldots, i_r}$ by
\[ 
\kappa_{i_1, \ldots, i_r} = \sigma(\chi_{i_1}, \chi_{i_2})^{-1}
\sigma(\chi_{i_1}\chi_{i_2}, \chi_{i_3})^{-1} \cdots \, 
\sigma(\chi_{i_1}\chi_{i_2} \cdots \chi_{i_{r-1}}, \chi_{i_r})^{-1} \, .
\]
Then we have a linear isomorphism 
$\kappa : k\langle x_1, \ldots, x_n \rangle \overset{\cong}{\longrightarrow} 
k\langle X_1, \ldots, X_n \rangle$ given by
\begin{eqnarray*} 
&\kappa(x_{i_1}x_{i_2} \ldots x_{i_r}) = \kappa_{i_1, i_2, \ldots, i_r} \, X_{i_1} X_{i_2} \ldots X_{i_r} \, , \\ 
&\kappa(X_j) = X_j \, , \quad \kappa(1) = \sigma(1, 1) \, .
\end{eqnarray*}

\begin{thm}[Presentation Theorem, Function Algebra Case]\label{thm-presentation} 
The twisted algebra~$A_F = {}_\sigma A$ is presented as
\[ 
A_F = k\langle X_1, \ldots, X_n \rangle /(\kappa(p_i))_{i \in I} \, .
\]
If $x_ix_j = x_jx_i$ holds in~$A$, then in~$A_F$ we have
\begin{equation*}
X_iX_j = b_{\sigma}(\chi_j, \chi_i) \, X_jX_i \, .
\end{equation*}
\end{thm}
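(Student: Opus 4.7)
The plan is to mirror the proof of Theorem~\ref{thm-presentation-group-algebra-case}, reducing everything to the general Presentation Theorem~\ref{presentation}, Proposition~\ref{hinftyinverse}, and iterated use of Lemma~\ref{lem-eigenvectors}. Let $M\subset A$ denote the $k$-submodule spanned by the generators $x_1,\ldots,x_n$. Since each $x_i$ is a $V$-eigenvector, $M$ is an $\OO_k(V)$-subcomodule, and we may write $A = T(M)/(R)$, where $R$ is the $\OO_k(V)$-subcomodule of $T(M)$ generated by the~$p_i$ (after replacing each $p_i$ by its $V$-orbit span if necessary, which does not change the ideal they generate). Because the invariant twist $F$ lies in $k[V]\otimes k[V]$, the twisting is governed entirely by the $V$-action, so Theorem~\ref{presentation} produces an isomorphism
\[
A_F \;\cong\; T({}_\sigma M)/\bigl(\xi_\infty^{-1}(p_i)\bigr)_{i\in I},
\]
and the ambient $G$-coaction on $A_F$ is simply inherited from the one on~$A$.

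The main task is then to identify $\xi_\infty^{-1}(p_i)$ with $\kappa(p_i)$. By Proposition~\ref{hinftyinverse}, the $k$-linear endomorphism $\xi_\infty^{-1}$ of $T(M)$ sends a monomial $x_{i_1}\cdots x_{i_r}$ to the iterated twisted product $x_{i_1}*_{\sigma^{-1}}\cdots *_{\sigma^{-1}} x_{i_r}$. I would proceed by induction on~$r$, using the crucial fact that $V$ acts on~$A$ by algebra automorphisms, so each partial product $x_{i_1}\cdots x_{i_s}$ is an eigenvector of character $\chi_{i_1}\cdots\chi_{i_s}$. Applying Lemma~\ref{lem-eigenvectors} with the two-cocycle $\sigma^{-1}$ at each step yields
\[
\xi_\infty^{-1}(x_{i_1}\cdots x_{i_r})
= \sigma^{-1}(\chi_{i_1},\chi_{i_2})\,\sigma^{-1}(\chi_{i_1}\chi_{i_2},\chi_{i_3})\cdots\sigma^{-1}(\chi_{i_1}\cdots\chi_{i_{r-1}},\chi_{i_r})\, X_{i_1}\cdots X_{i_r},
\]
which, because $\sigma^{-1}(\chi,\psi)=\sigma(\chi,\psi)^{-1}$ on characters, is precisely $\kappa_{i_1,\ldots,i_r}\,X_{i_1}\cdots X_{i_r}=\kappa(x_{i_1}\cdots x_{i_r})$. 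Extending by linearity gives $\xi_\infty^{-1}(p_i)=\kappa(p_i)$ and hence the stated presentation.

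For the commutativity refinement, I would specialize the $r=2$ case of the previous formula to the relation $x_ix_j = x_jx_i$, obtaining $\sigma^{-1}(\chi_i,\chi_j)\,X_iX_j = \sigma^{-1}(\chi_j,\chi_i)\,X_jX_i$, and then rearrange using the definition $b_\sigma(\chi_j,\chi_i) = \sigma(\chi_i,\chi_j)/\sigma(\chi_j,\chi_i)$ from~\eqref{eq-b-from-c} to conclude $X_iX_j = b_\sigma(\chi_j,\chi_i)\,X_jX_i$. I do not foresee any real obstacle: every ingredient is already in place, and the argument is essentially a transcription of the group-algebra version (Theorem~\ref{thm-presentation-group-algebra-case}). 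The only minor bookkeeping concerns ensuring that the relevant subspaces are genuine $\OO_k(V)$-subcomodules, which is immediate from the eigenvector hypothesis.
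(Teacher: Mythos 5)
Your proposal is correct and follows exactly the route the paper intends: the paper gives no separate proof of Theorem~\ref{thm-presentation}, presenting it (like Theorem~\ref{thm-presentation-group-algebra-case}) as a direct consequence of Theorem~\ref{presentation}, Proposition~\ref{hinftyinverse} and iterated applications of Lemma~\ref{lem-eigenvectors}, which is precisely what you flesh out, including the observation that only the $V$-action enters the computation since $F\in k[V]\otimes k[V]$. The identification $\xi_\infty^{-1}=\kappa$ via $\sigma^{-1}(\chi,\psi)=\sigma(\chi,\psi)^{-1}$ on characters and the rearrangement giving $b_\sigma(\chi_j,\chi_i)$ are both as the paper intends.
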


\begin{ex} Our first example is an exercise for the reader. 
Show that, when~$q$ is specialized to a primitive~$r$-th root of unity, 
the quantum affine space as above can be obtained as a twist of the
algebra~$k[x_1, \ldots, x_n]$ equipped with a certain action of~$G = V = (\Z/r )^n$.
\end{ex}

\begin{ex}[The Quantum Tetrahedron] 
Our next example is in some sense a subspace of the quantum $3$-space
with $q= -1$, $n=3$ and $\alpha_{ij} = 1$.  
It should make the relations~$\kappa(p_i)$ appearing in the Presentation Theorem more concrete.

Let $k$ be a field of characteristic zero.  By ``tetrahedron'' we mean the
union of the four planes in~$k^3$ defined by the equations
\begin{eqnarray*} 
p_1 = x+y+z - 1 \, , && p_2 = x-y-z - 1 \, , \\ 
p_3 = -x-y+z - 1 \, , && p_4 = -x+y-z - 1 \, .
\end{eqnarray*} 
In terms of the algebra of functions on the tetrahedron, our actual object of study is the algebra
\[ A = k[x,y,z] / (p_1 p_2 p_3 p_4) \, .
\] (Thinking of the tetrahedron as embedded in~$\R^3$ with equilateral
faces and barycentre at the origin, one obtains the coordinates above
by choosing basis vectors on the axes of the three rotations of
angle~$\pi$ preserving the tetrahedron.)

The alternating group $G = A_4$ acts naturally on the tetrahedron.
The permutations $e_1= (1,2)(3,4)$, $e_2= (1,3)(2,4)$, and $\gamma = (1,2,3)$
generate~$A_4$, while~$e_1$ and~$e_2$ alone generate a copy of
Klein's \emph{Vierergruppe}~$\Z/2 \times \Z/2$. 
Our notation is going to be consistent with that of Example~\ref{ex-fundamental}. 
The actions on~$A$, or rather on the restriction to the $3$-dimensional subspace
spanned by the functions $x$, $y$, $z$, are given by the matrices
\begin{equation}\label{eq-action-a4-tetrahedron} 
e_1 =
\left( \begin{array}{ccc} 
1& 0& 0\\ 
0& -1 & 0\\ 
0& 0 & -1
\end{array}\right) , \quad 
e_2 = 
\left( \begin{array}{ccc} 
-1& 0& 0\\ 
0& -1 & 0\\ 
0& 0 & 1 \end{array}\right) , \quad 
\gamma =
\left( \begin{array}{ccc} 
0& 0& 1\\ 
1& 0 & 0\\ 
0& 1 & 0 \end{array}\right) .
\end{equation}

Twisting the algebra~$A$ 
with the help of the Drinfeld twist~$F$ of Example~\ref{ex-fundamental}, 
we obtain the twisted algebra~$A_F$, for which we are going to give a presentation.

Let $X = \kappa(x)$, $Y= \kappa (y)$ and $Z = \kappa (z)$. We shall
need the following explicit computations.

\begin{lem} 
We have the following identities in $A_F$:
\[
\begin{array}{llll} 
XY = -\kappa(xy) \, , & YX = \kappa(xy) \, , & XZ = \kappa(xz) \, , & ZX= -\kappa(xz) \, , \\ 
ZY = \kappa(yz) \, , & YZ = -\kappa(yz) \, , & X^2 = - \kappa(x^2) \, , & Y^2 = - \kappa(y^2) \, , \\ 
Z^2 = - \kappa(z^2) \, , & XYZ = \kappa(xyz) \, , & X^4 = \kappa(x^4) \, , & Y^4 = \kappa(y^4) \, , \\ 
Z^4 = \kappa(z^4) \, , & X^2Y^2 = \kappa(x^2y^2) \, , & Y^2Z^2 = \kappa(y^2z^2) \, , & 
X^2Z^2 = \kappa(x^2z^2) \, .
\end{array}
\]
\end{lem}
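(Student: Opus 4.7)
The plan is to reduce every identity to a direct application of Lemma~\ref{lem-eigenvectors} together with the explicit values of the two-cocycle~$\sigma$ fixed in Example~\ref{ex-fundamental}. The first step is to read off from the matrices in~\eqref{eq-action-a4-tetrahedron} that, under the action of $V=\langle e_1,e_2\rangle$, the generators $x$, $y$, $z$ are simultaneous eigenvectors with characters
\[
\chi_x=\widehat{e}_1,\qquad \chi_y=\widehat{e}_3,\qquad \chi_z=\widehat{e}_2,
\]
since $e_1$ fixes~$x$ and negates $y,z$, etc. In particular $\chi_x^2=\chi_y^2=\chi_z^2=1$, which will be crucial for the degree-$4$ identities.

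For the degree~$2$ and degree~$3$ identities, Lemma~\ref{lem-eigenvectors} gives $a_1\ast_F a_2=\sigma(\chi_1,\chi_2)\,a_1a_2$ whenever $a_i$ is homogeneous of character~$\chi_i$, and the definition of~$\kappa$ cancels exactly the same scalar. Thus every identity of the form $XY=\pm\kappa(xy)$ is the assertion $\sigma(\chi_x,\chi_y)=\mp 1$, which one reads off from requirements~(ii)--(iv) of Example~\ref{ex-fundamental} (recalling $\widehat{e}_1\widehat{e}_3=\widehat{e}_2$, etc.). For instance $XY=x\ast_F y=\sigma(\widehat{e}_1,\widehat{e}_3)\,xy=-xy$, so $XY=-\kappa(xy)$; and $XYZ=(X\ast_F Y)\ast_F Z$ unwinds to $\sigma(\widehat{e}_1,\widehat{e}_3)\,\sigma(\widehat{e}_2,\widehat{e}_2)\,xyz=(-1)(-1)xyz=xyz=\kappa(xyz)$. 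The signs $X^2=-\kappa(x^2)$, $Y^2=-\kappa(y^2)$, $Z^2=-\kappa(z^2)$ come from $\sigma(\widehat{e}_i,\widehat{e}_i)=-1$.

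For the degree~$4$ identities, the key observation is that $x^2$, $y^2$, $z^2$ all have trivial character, and $\sigma(1,\chi)=\sigma(\chi,1)=\sigma(1,1)=1$. Hence, using associativity of $\ast_F$ to regroup, a product such as $X^2Y^2=(X\ast_F X)\ast_F(Y\ast_F Y)=(-x^2)\ast_F(-y^2)=x^2\ast_F y^2$ collapses to the ordinary product $x^2y^2$, and a bookkeeping computation of $\kappa_{1,1,2,2}$ (four factors of~$\sigma^{-1}$ on characters that are either trivial or repeated) shows $\kappa(x^2y^2)=x^2y^2$ as well. The same mechanism handles $X^4=x^4=\kappa(x^4)$ and the remaining mixed squares $Y^2Z^2$, $X^2Z^2$.

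The calculation is essentially bookkeeping, and the only mild obstacle is keeping track of two parallel conventions: $X_i$ denotes the same underlying element as $x_i$ but is multiplied with $\ast_F$, while $\kappa(p)$ is a rescaling of the ``capital-letter version'' of~$p$ arranged precisely so that $\kappa(p)$ agrees with~$p$ in the original algebra~$A$. Once one is careful about this, each identity in the table reduces to evaluating a short product of values of~$\sigma$ from Example~\ref{ex-fundamental}, and the sixteen equalities follow one after another.
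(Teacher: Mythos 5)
Your proof is correct and follows the only natural route --- the paper states this lemma without proof, and the intended argument is exactly yours: read off the characters $\chi_x=\widehat e_1$, $\chi_y=\widehat e_3$, $\chi_z=\widehat e_2$ from~\eqref{eq-action-a4-tetrahedron}, then apply Lemma~\ref{lem-eigenvectors} (iterated via Proposition~\ref{hinftyinverse}) together with the explicit cocycle values of Example~\ref{ex-fundamental}, under which $\kappa(m)$ coincides with the monomial $m$ computed with the original product of~$A$. Two harmless slips that do not affect the argument: in ``$XY=\pm\kappa(xy)$ is the assertion $\sigma(\chi_x,\chi_y)=\mp 1$'' the right-hand sign should be $\pm 1$, as your own worked example ($\sigma(\widehat e_1,\widehat e_3)=-1$ giving $XY=-\kappa(xy)$) shows, and $\kappa_{1,1,2,2}$ involves three factors of $\sigma^{-1}$, not four.
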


\begin{prop} 
The algebra $A_F$ is isomorphic to the algebra generated by three generators $X$, $Y$, $Z$ 
subject to the following relations:
\begin{equation*} 
XY = -YX \, , \quad XZ = -ZX \, , \quad YZ = - ZY \, ,
\end{equation*} 
and
\begin{multline*} 
2(X^2 + Y^2 + Z^2) - 2( X^2Y^2 + X^2Z^2 + Y^2Z^2) \\
- 8XYZ + (X^4 + Y^4 + Z^4) + 1 = 0 \, .
\end{multline*} 
The actions of~$e_1$, $e_2$ and~$\gamma $ are given by
the matrices in~\eqref{eq-action-a4-tetrahedron} with respect to the basis~$X$, $Y$, $Z$.
\end{prop}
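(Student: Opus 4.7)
The plan is to apply the Function Algebra version of the Presentation Theorem (Theorem~\ref{thm-presentation}) to $A = k[x,y,z]/(p_1 p_2 p_3 p_4)$, with $V = \langle e_1, e_2 \rangle$ the Vierergruppe inside $G = A_4$ and $F$ the invariant twist from Example~\ref{ex-fundamental}, which does lie in $k[V] \otimes k[V]$. First one reads off from~\eqref{eq-action-a4-tetrahedron} that $x$, $y$, $z$ are $V$-eigenvectors whose characters, in the notation of Example~\ref{ex-fundamental}, are $\chi_x = \widehat{e}_1$, $\chi_y = \widehat{e}_3$, $\chi_z = \widehat{e}_2$ respectively.

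Theorem~\ref{thm-presentation} then presents $A_F$ as $k\langle X, Y, Z\rangle$ modulo the image under $\kappa$ of the defining relations of $A$. The commutativity relations $[x_i, x_j] = 0$ produce $X_i X_j = b_\sigma(\chi_j, \chi_i) X_j X_i$ by the second half of the theorem; since the bicharacter $b$ of Example~\ref{ex-fundamental} takes the value $-1$ on any two distinct elements among $\widehat{e}_1, \widehat{e}_2, \widehat{e}_3$, these become exactly the three anticommutation relations in the statement. For the quartic relation I would first use the factorization
\[
p_1 p_2 p_3 p_4 = (x^2 - y^2 - z^2 + 1)^2 - 4(x + yz)^2,
\]
obtained by grouping $p_1 p_2 = (x-1)^2 - (y+z)^2$ and $p_3 p_4 = (x+1)^2 - (y-z)^2$, and expand it into the standard monomial basis. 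Applying $\kappa$ term-by-term and invoking the preceding lemma -- in particular $\kappa(x^2) = -X^2$ and cyclic, $\kappa(x^2 y^2) = X^2 Y^2$ and cyclic, $\kappa(x^4) = X^4$ and cyclic, $\kappa(xyz) = XYZ$, and $\kappa(1) = \sigma(1, 1) = 1$ -- converts each of the ten terms into the corresponding signed capital-letter monomial, and the signs conspire to yield exactly the displayed quartic identity. The $A_4$-action on the generators is unchanged because $\kappa$ acts as the identity on the degree-one subspace $kx \oplus ky \oplus kz$, so the matrices remain those in~\eqref{eq-action-a4-tetrahedron}.

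The main bookkeeping obstacle is the preceding lemma itself, i.e.\ the sixteen monomial identities for $\kappa$. Each one follows from Lemma~\ref{lem-eigenvectors} by evaluating a telescoping product of the form $\sigma(\chi_{i_1}, \chi_{i_2}) \sigma(\chi_{i_1}\chi_{i_2}, \chi_{i_3}) \cdots$ using the four explicit properties of $\sigma$ recorded in Example~\ref{ex-fundamental}; the laziness of $\sigma$ guarantees that these scalars do not depend on how one brackets the multiple $*_F$-product, so any convenient grouping works.
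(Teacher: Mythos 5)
Your proposal is correct and follows the same route the paper intends: identify the $V$-characters of $x,y,z$ as $\widehat e_1,\widehat e_3,\widehat e_2$, apply Theorem~\ref{thm-presentation} to get the three anticommutation relations from $b$, and convert the quartic via the $\kappa$-identities of the preceding Lemma (your values $\kappa(x^2)=-X^2$, $\kappa(x^2y^2)=X^2Y^2$, $\kappa(x^4)=X^4$, $\kappa(xyz)=XYZ$, $\kappa(1)=1$ all agree with it), with the factorization $p_1p_2p_3p_4=(x^2-y^2-z^2+1)^2-4(x+yz)^2$ being a clean way to organize the expansion that the paper leaves implicit. The only nitpick is that bracket-independence of the iterated twisted product is due to the two-cocycle identity (associativity), while laziness is what is needed for Proposition~\ref{hinftyinverse} itself; this does not affect the argument.
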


It follows from~\eqref{twist-twice} and the isomorphism
$\H^2_\ell(\OO_k(A_4)) \cong \Z/2$ that $(A_F)_F \cong A$. Therefore,
the element of order two in~$\H^2_\ell(\OO_k(A_4))$ exchanges the
function algebra of the tetrahedron and the non-commutative algebra~$A_F$.  
Thus the tetrahedron and the ``quantum tetrahedron''~$A_F$ are mirror images of each other.
\end{ex}

\section{A braided group}\label{sec-braided}

\subsection{Twisting Hopf algebras}

Let us briefly return to the general setting of Section~\ref{sec-preliminaries}. 
We let~$H$ be a cobraided Hopf algebra and~$H' = {}_\sigma H_{\sigma ^{-1}}$. 
We know that there is an equivalence of braided monoidal categories
${}^H \mathcal{M} \cong {}^{H'} \mathcal{M}$.

There are many formal consequences of this, and we have already been
exploiting at length the fact that algebra objects are preserved by
this equivalence. Now let us look at Hopf algebras and their modules. 

First we need to recall that two algebras~$A$ and~$B$ in a braided
category may be tensored. Indeed, if $\mu_A$ (resp.~$\mu_B$) is the
product of~$A$ (resp.\ of~$B$), then the product on~$A \otimes B$ is
\[
(\mu_A \otimes \mu_B) \circ (\id_A \otimes \gamma_{B,A} \otimes \id_B) \, .
\]
(Here $\gamma_{B, A} : B\otimes A \to A \otimes B$ is the braiding.)
We shall use the notation~$A \bt B$ for emphasis
(with~$b$ for braiding in general, and also for bicharacter for the applications in this paper). 
Indeed in many situations, the underlying category has also a ``trivial'' braiding, 
so that the notation~$A \otimes B$ could be misunderstood.

Keeping in mind that the definition of a Hopf algebra~$A$ in a braided
monoidal category involves a map of algebras 
\[
\Delta \colon A  \rightarrow A \bt A \, , 
\]
it is now formal that an equivalence of braided categories has to preserve Hopf algebras. 

The notion of an~$A$ module in~${}^H \mathcal{M}$ is the obvious one. 
For example when~$H=\OO_k(G)$, so that the category in question is that
of~$G$-modules, then an~$A$-module~$M$ is, by definition,
simultaneously a~$G$-module and an~$A$-module 
and we have the compatibility condition
\[
g \cdot (a m)= (g \cdot a) (g \cdot m)
\]
for all $g\in G$, $a\in A$, and $m\in M$.

Suppose that~$A$ and~$B$ are algebras in a braided category, and
that~$M$ is an $A$-module and~$N$ a $B$-module, still in that category. 
Then $M\otimes N$ is an $A \bt B$-module in the natural way:
if $\theta_A: A\otimes M \to M$ is the action (and similarly for~$B$ and~$N$), 
then the action $A\bt B \otimes (M\otimes N) \to M\otimes N$ is given by
\[
(\theta_A \otimes \theta_B) \circ (\id_A \otimes \gamma_{B, M} \otimes \id_N)\, .
\]
In particular, if~$A$ is a Hopf algebras, then its modules may be tensored. 

In summary, we have the following.

\begin{prop}\label{prop-twist-hopf}
If ~$A$ is a Hopf algebra in~${}^H \mathcal{M}$, then~${}_\sigma A$ is
a Hopf algebra in~${}^{H'} \mathcal{M}$. 

The equivalence~${}^H \mathcal{M}\cong {}^{H'} \mathcal{M}$ induces an
equivalence between the categories of Hopf algebras in these categories. 
For each Hopf algebra~$A$, there is also a monoidal equivalence 
between the categories of~$A$-modules and the category of~${}_\sigma A$-modules.
\end{prop}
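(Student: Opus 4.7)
The plan is to deduce all three claims from Proposition~\ref{prop-equiv-braided-categories}. The essential observation is that both the notion of a Hopf algebra in a braided monoidal category and the notion of a module over such are entirely \emph{diagrammatic}: the data consist of morphisms $\mu \colon A \bt A \to A$, $\eta \colon k \to A$, $\Delta \colon A \to A \bt A$, $\eps \colon A \to k$, $S \colon A \to A$, and, for a module $M$, an action $\theta \colon A \otimes M \to M$; the defining axioms (associativity, coassociativity, the bialgebra compatibility expressing that $\Delta$ is an algebra morphism with respect to the braided product on $A \bt A$, the antipode axiom, and the action axioms) are commutative diagrams whose only ingredients are composition, the monoidal product, the associator and unit constraints, and the braiding. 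Consequently, any braided monoidal equivalence automatically preserves all of these structures.

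First I would record the fact, implicit in Section~\ref{subsec-twist-Hopf}, that the equivalence $\Phi \colon {}^H\mathcal{M} \to {}^{H'}\mathcal{M}$, $M \mapsto {}_\sigma M$, is strong monoidal via the natural isomorphisms $\xi$ and $\eta$ of~\eqref{monoidstruc1} and~\eqref{monoidstruc2}, and that by Proposition~\ref{prop-equiv-braided-categories} it is compatible with the braidings. Applying $\Phi$ to the structure morphisms of a Hopf algebra $A \in {}^H\mathcal{M}$ and inserting $\xi$ or $\xi^{-1}$ in the appropriate places produces a Hopf algebra structure on ${}_\sigma A$: explicitly, the twisted product $*_\sigma$ of~\eqref{twistedproduct-of-A} is $\Phi(\mu) \circ \xi^{-1}_{A,A}$, the twisted coproduct is $\xi_{A,A} \circ \Phi(\Delta)$, and so on. The diagrammatic axioms transfer because $\Phi$ preserves composition, tensor products, and the braiding; this gives the first assertion. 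The second assertion follows by applying the same recipe to the quasi-inverse $P \mapsto {}_{\sigma^{-1}}P$ and noting that the unit and counit of the resulting adjunction are morphisms in ${}^H\mathcal{M}$ respectively ${}^{H'}\mathcal{M}$ that are already compatible with $\xi$, hence are automatically Hopf algebra morphisms whenever they are evaluated at a Hopf algebra.

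For the last assertion, let $A$ be a Hopf algebra in ${}^H\mathcal{M}$ and let $M$ be an $A$-module with action $\theta \colon A \otimes M \to M$. The composite $\Phi(\theta) \circ \xi^{-1}_{A,M}$ equips ${}_\sigma M$ with a ${}_\sigma A$-action, and this assignment is functorial with a quasi-inverse obtained by symmetry. The monoidal structure on the category of $A$-modules is built from $\Delta_A$, the braiding $\gamma_{A,-}$, and the actions on the two factors --- all of which $\Phi$ preserves --- so $\xi_{M,N}$ automatically intertwines the $A$-action on $M \otimes N$ with the ${}_\sigma A$-action on ${}_\sigma M \otimes {}_\sigma N$. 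The main obstacle is really only bookkeeping: one needs to check that the various copies of $\xi$ threaded through composites cancel or align as the axioms require, but no substantive difficulty arises, since every ingredient participating in those axioms is preserved, on the nose, by a braided monoidal equivalence.
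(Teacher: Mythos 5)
Your proof is correct and follows essentially the same route as the paper, which states the proposition as a formal consequence of the fact that Hopf algebras and their modules in a braided monoidal category are defined purely diagrammatically and hence are preserved by any braided monoidal equivalence (here the one furnished by Proposition~\ref{prop-equiv-braided-categories} together with $\xi$ and $\eta$). The only quibble is notational: since $\xi$ maps ${}_\sigma A\otimes{}_\sigma A$ \emph{to} ${}_\sigma(A\otimes A)$, the transferred product is $\Phi(\mu)\circ\xi_{A,A}$ and the transferred coproduct is $\xi_{A,A}^{-1}\circ\Phi(\Delta)$, i.e.\ the opposite of the placement you wrote, but this does not affect the argument.
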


\subsection{An example related to the group $SL_2$}\label{subsec-func-on-sl2}

In order to illustrate the above phenomena, we shall now give some
explicit computations with the algebra of functions on the algebraic group~$SL_2$. 
We work with~$H= \OO_k(G)$ for~$G=\Z/2 \times \Z/2$. 

We let the group $G = \langle e_1, e_2 \rangle$ act on the algebra
\[
A= SL(2) = k[a,b,c,d]/(ad - bc -1)
\]
by 
\[
e_1 \cdot 
\left(\begin{array}{cc}
a & b \\
c & d
\end{array}\right) 
=
\left(\begin{array}{cc}
d & c \\
b & a
\end{array}\right) 
\quad\text{and}\quad 
e_2 \cdot 
\left(\begin{array}{cc}
a & b \\
c & d
\end{array}\right) 
=
\left(\begin{array}{cc}
a & -b \\
-c & d
\end{array}\right) \, . 
\]
(Each matrix identity is shorthand for four identities in~$A$.)

Let $F$ be the twist introduced in Example~\ref{ex-fundamental};
together with the algebra~$A$, it defines a twisted algebra~$A_F$,
which we also denote by~$SL_F(2)$.  Let us give a presentation of the latter.

Set $x= (a+d)/2$, $y= (a-d)/2$, $z= (b+c)/2$ and $t= (b-c)/2$. 
These elements are eigenvectors for the action of~$G$, and we have
\[
A = k[x,y,z,t]/(x^2 - y^2 - z^2 + t^2 - 1) \, .
\]
Using Theorem~\ref{thm-presentation}, it is an exercise to show the following.

\begin{prop}\label{prop-SL2}
The algebra~$SL_F(2)$ has a presentation with four generators $X$, $Y$, $Z$, $T$ 
subject to the seven relations
\[
XY = YX\, , \quad XZ = ZX \, , \quad XT = TX \, , 
\]
\[
YZ  = - YZ \, ,  \quad YT = -TY \, , \quad ZT = -TZ \, , 
\]
\[
X^2 + Y^2 + Z^2 - T^2 = 1 \, . 
\]
\end{prop}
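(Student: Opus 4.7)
The plan is to apply Theorem~\ref{thm-presentation} to a presentation of $SL(2)$ whose generators are simultaneous eigenvectors for the abelian subgroup $V = \langle e_1, e_2 \rangle$ that supports the twist~$F$. Since $\mathrm{char}\, k \neq 2$, the change of variables $a = x+y$, $d = x-y$, $b = z+t$, $c = z-t$ is invertible, and a direct calculation gives $ad - bc = x^2 - y^2 - z^2 + t^2$, so $A = k[x,y,z,t]/(x^2 - y^2 - z^2 + t^2 - 1)$, as asserted just before the statement of the proposition.

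From the explicit matrices for the action I would then read off the characters of $x, y, z, t$. One finds that $x$ is fixed by both $e_1$ and $e_2$, hence has trivial character; that $e_1 \cdot y = -y$ while $e_2 \cdot y = y$, so that $y$ carries $\widehat{e}_2$; symmetrically $z$ carries $\widehat{e}_1$; and $t$ is negated by both generators of $V$, so it carries $\widehat{e}_3$. With the characters pinned down, the first six relations are immediate from the commutation part of Theorem~\ref{thm-presentation}: commutativity of $A$ yields $X_iX_j = b(\chi_j,\chi_i)X_jX_i$ in $A_F$, and the bicharacter~$b$ of Example~\ref{ex-fundamental} equals~$1$ as soon as one of its arguments is trivial and equals $-1$ on any pair of distinct non-trivial characters. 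Thus $X$ is central in $A_F$ while $Y, Z, T$ pairwise anticommute.

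For the seventh relation I would compute $\kappa$ applied to the defining relation $p = x^2 - y^2 - z^2 + t^2 - 1$. The specific $\sigma$ of Example~\ref{ex-fundamental} is normalized by $\sigma(1,1) = 1$ and satisfies $\sigma(\widehat{e}_i,\widehat{e}_i) = -1$ for $i = 1, 2, 3$. Hence $\kappa(x^2) = \sigma(1,1)^{-1}X^2 = X^2$, while $\kappa(y^2) = -Y^2$ and similarly $\kappa(z^2) = -Z^2$, $\kappa(t^2) = -T^2$; together with $\kappa(1) = \sigma(1,1) = 1$, this collapses to $\kappa(p) = X^2 + Y^2 + Z^2 - T^2 - 1$, which is the last relation.

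The argument is essentially mechanical, and the only place where errors could easily creep in is the bookkeeping that matches the character labels $\widehat{e}_i$ attached to $y, z, t$ with the specific values of $\sigma$ and $b$ fixed by Example~\ref{ex-fundamental}; once that accounting is carried out honestly, Theorems~\ref{presentation} and~\ref{thm-presentation} do the rest, so no separate verification (e.g.\ via a Diamond Lemma) is needed.
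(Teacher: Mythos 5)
Your proposal is correct and follows exactly the route the paper intends: the paper's ``proof'' is the single line ``Using Theorem~\ref{thm-presentation}, it is an exercise to show the following,'' and you have carried out that exercise, with the character assignments ($x\mapsto 1$, $y\mapsto\widehat{e}_2$, $z\mapsto\widehat{e}_1$, $t\mapsto\widehat{e}_3$), the values of $b$ and $\sigma$ from Example~\ref{ex-fundamental}, and the computation of $\kappa(x^2-y^2-z^2+t^2-1)$ all checking out. (Incidentally, your derivation also confirms that the relation printed as $YZ=-YZ$ in the statement should read $YZ=-ZY$.)
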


As an illustration, we may take $k$ to be the field~$\R$ of real numbers
and describe the set ${\Alg}(SL_F(2), \R)$ of real points, 
which we denote by $SL_F(2, \R)$.

\begin{coro}\label{coro-sl2R}
The set $SL_F(2, \R)$ consists of two circles and a hyperbola, all intersecting in two points.
\end{coro}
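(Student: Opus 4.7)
The plan is to note that a real point of $SL_F(2)$ is by definition an algebra homomorphism $\phi \colon SL_F(2) \to \R$, and since $\R$ is commutative, the three anticommutation relations $YZ = -ZY$, $YT = -TY$, $ZT = -TZ$ from Proposition~\ref{prop-SL2} force $\phi(YZ) = \phi(YT) = \phi(ZT) = 0$ (each is equal to its own negative). In particular $\phi(Y)\phi(Z) = \phi(Y)\phi(T) = \phi(Z)\phi(T) = 0$, so at most one of the three scalars $\phi(Y), \phi(Z), \phi(T)$ can be non-zero.

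I would then split into the resulting cases. When $\phi(Y) = \phi(Z) = \phi(T) = 0$, the remaining relation $X^2 + Y^2 + Z^2 - T^2 = 1$ gives $\phi(X) = \pm 1$, producing the two distinguished points $P_\pm = (\pm 1, 0, 0, 0)$. When only $\phi(T)$ is allowed to vary (that is, $\phi(Y) = \phi(Z) = 0$), the relation reduces to $\phi(X)^2 - \phi(T)^2 = 1$, which cuts out a hyperbola in the $(X,T)$-plane passing through $P_\pm$. The two symmetric cases where only $\phi(Y)$ or only $\phi(Z)$ is free give the two circles $\phi(X)^2 + \phi(Y)^2 = 1$ and $\phi(X)^2 + \phi(Z)^2 = 1$, each also passing through $P_\pm$.

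Conversely, each point on any of these three curves plainly defines a valid algebra map, since assigning $0$ to two of the three anticommuting generators makes all anticommutation relations trivially satisfied, and the quadratic relation holds by construction. This exhibits $SL_F(2,\R)$ as the union of the two circles and the hyperbola, meeting pairwise exactly at $P_\pm$.

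No step is really an obstacle here; the only thing to be careful about is the converse direction, namely that one should explicitly check that these candidate assignments really extend to algebra homomorphisms, so that no spurious relation has been overlooked. Since the presentation in Proposition~\ref{prop-SL2} is complete, this is immediate: each proposed point satisfies every one of the seven defining relations.
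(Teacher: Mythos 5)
Your argument is correct and is essentially the paper's own proof: both reduce a real point to a tuple $(x,y,z,t)$ satisfying $yz=yt=zt=0$ together with $x^2+y^2+z^2-t^2=1$, and then read off the two circles and the hyperbola meeting at $(\pm 1,0,0,0)$. The only difference is cosmetic — you phrase the reduction via the homomorphism property of $\phi$ and check the converse slightly more explicitly.
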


\begin{proof}
An element of~$SL_F(2, \R)$ can be described as a point~$(x,y,z,t) \in \R^4$ 
satisfying $yz=0$, $yt=0$, $zt=0$ and $x^2+y^2 + z^2 - t^2 = 1$. 
If $y \ne 0$, then the point lies on the circle $x^2 + y^2 =1 $ in the plane $z=t=0$; 
if $z \ne 0$, then the point is on the circle $x^2 +  z^2 = 1$ in the plane $y=t=0$; 
if $t\ne 0$, then the point is on the hyperbola $x^2 - t^2 = 1$ in the plane $y=z=0$. 
Conversely, these three curves wholly lie in $SL_F(2, \R)$. 
Their intersection is reduced to the points~$(\pm 1,0,0,0)$.
\end{proof}

\begin{figure}[h]
\centering
\includegraphics[width=8cm]{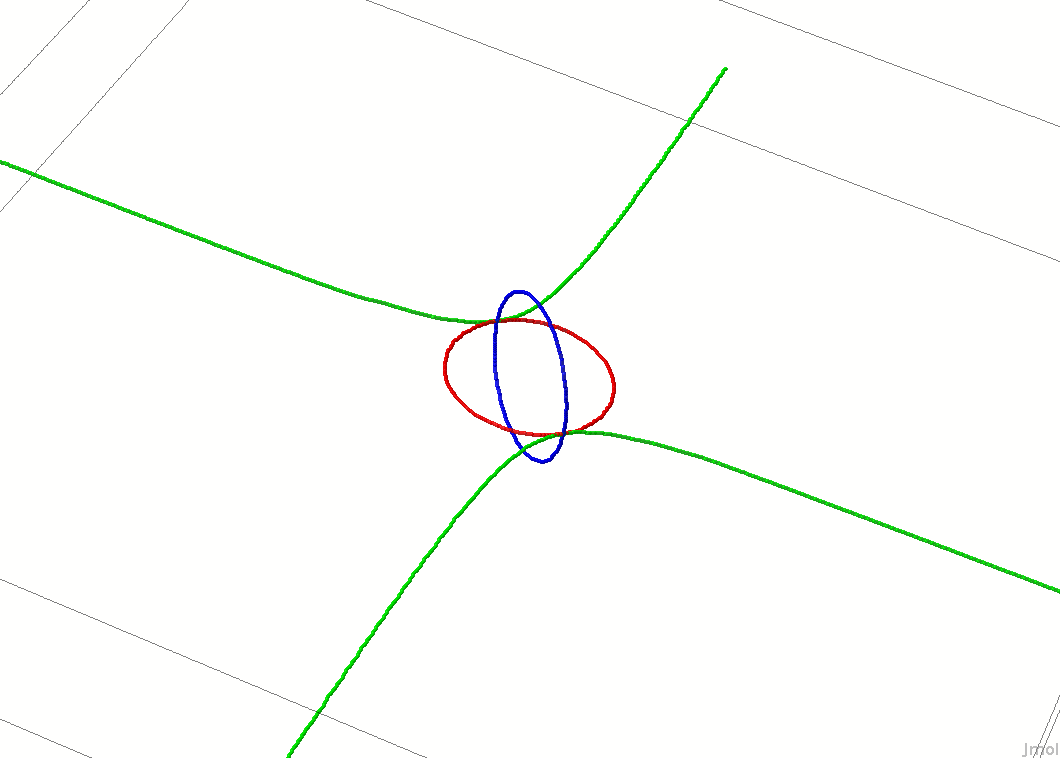}
\caption{\emph{An artist's view of}~$SL_F(2, \R)$}
\end{figure}

The algebra $SL(2)$ is a Hopf algebra with coproduct 
$\Delta : SL(2) \to SL(2) \otimes SL(2)$ given by
\[
\Delta(x) = xx' + yy' + zz' - tt' \, ,  \qquad \Delta(y) = xy' + yx' -zt' + tz'  \, , 
\]
\[
\Delta(z) = xz' + yt' + zx'  - ty' \, ,  \qquad \Delta(t) = xt' + yz' - zy'  + tx' \,  ,
\]
where $x$ is identified with $x\otimes 1$ and $x'$ with $1 \otimes x$
(similarly for the other variables).
By Proposition~\ref{prop-twist-hopf}, 
the twisted algebra~$SL_F(2)$ is a Hopf algebra in the braided sense.  
Let us denote $(SL(2) \otimes SL(2))_F$ by~$SL_F(2,2)$ for short.

\begin{prop}\label{prop-SL22}
The algebra $SL_F(2,2)$ is generated 
by eight generators $X$, $Y$, $Z$, $T$, $X'$, $Y'$, $Z'$, $T'$ 
subject to the following relations: 

\begin{itemize}
\item[(i)] 
the ``left relations'', which are as in Proposition~\ref{prop-SL2},

\item[(ii)] 
the ``right relations'', which are obtained from the left relations
by applying the substitutions 
$X \mapsto X'$, $Y\mapsto Y'$, $Z\mapsto Z'$, $T\mapsto T'$,

\item[(iii)] 
the ``composability conditions'', 
namely $X$ and $X'$ commute with all other generators, and 
\[ 
YZ' = - Z'Y\, , \quad YT'=-T'Y\, , \quad ZT'=-T'Z\, , 
\]
\[
Y'Z = - ZY'\, , \quad Y'T=-TY'\, , \quad Z'T=-TZ'\, .  
\]
\end{itemize}
\end{prop}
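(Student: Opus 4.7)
The plan is to apply the Presentation Theorem \ref{thm-presentation} directly to the $G$-algebra $SL(2)\otimes SL(2)$ (with the diagonal action of $G = \Z/2 \times \Z/2$), reading off the presentation of the twist.

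First I would record a presentation of $SL(2)\otimes SL(2)$ as a $G$-algebra in eight generators. Using the change of variables already introduced for $SL(2)$ in Section~\ref{subsec-func-on-sl2}, the algebra $SL(2)\otimes SL(2)$ is presented by generators $x,y,z,t,x',y',z',t'$ subject to three batches of relations: a \emph{left batch} (the single relation $x^2-y^2-z^2+t^2-1 = 0$), a \emph{right batch} (its primed analogue), and a \emph{commutation batch} $uv'=v'u$ for all $u,v\in \{x,y,z,t\}$. All eight generators are $V$-eigenvectors: reading off from the matrices in~\eqref{eq-action-a4-tetrahedron}, the character of $x$ (and of $x'$) is trivial, while $y,y'$ carry the character $\widehat{e}_2$, $z,z'$ carry $\widehat{e}_1$, and $t,t'$ carry $\widehat{e}_3 = \widehat{e}_1\widehat{e}_2$.

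Next, I would apply Theorem~\ref{thm-presentation} to this presentation. The left relation only involves the unprimed generators, so the map $\kappa$ acts on it exactly as it does in the proof of Proposition~\ref{prop-SL2}; it therefore produces the left relations of the statement. Symmetrically the right relation produces the right relations. For the length-two commutation relations $uv' = v'u$, the second statement of Theorem~\ref{thm-presentation} gives directly $UV' = b_\sigma(\chi_{v'},\chi_u)\,V'U$, so the task reduces to computing the bicharacter $b$ of Example~\ref{ex-fundamental} on all relevant pairs.

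Here $b(\chi,\psi) = 1$ whenever $\chi$ or $\psi$ is trivial, while on the three non-trivial characters $\widehat{e}_1,\widehat{e}_2,\widehat{e}_3$ the bicharacter is alternating and satisfies $b(\widehat{e}_i,\widehat{e}_j)=-1$ for $i\neq j$ (these are the only values compatible with $b$ being the unique non-trivial alternating bicharacter on $\widehat{V}$). Feeding this into the translated commutation relations yields $XU'=U'X$ and $X'U=UX'$ for every $U$, together with the six sign-flipped relations among $Y,Z,T,Y',Z',T'$ listed in (iii). Finally one checks that no new relations appear: since the Presentation Theorem gives an isomorphism, the listed relations are both necessary and sufficient.

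The main obstacle is purely bookkeeping: correctly identifying the characters of the eight generators and not confusing primed and unprimed factors when invoking $b_\sigma(\chi_{v'},\chi_u)$ (as opposed to $b_\sigma(\chi_u,\chi_{v'})$), since the two differ by a sign on the non-trivial pairs. Once the character table is fixed and one consistent convention is chosen, everything falls out of Theorem~\ref{thm-presentation} and of the values of $b$ computed above.
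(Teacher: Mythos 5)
Your proposal is correct and is essentially the paper's own proof: the paper simply declares the proposition ``an exercise using Theorem~\ref{thm-presentation}'' (mentioning the braided tensor product $SL_F(2)\,\boldsymbol{\otimes}^b\,SL_F(2)$ as an alternative), and your write-up is that exercise carried out, with the right character assignments and the right values of $b$. One small slip: the characters of $x,y,z,t$ should be read off from the action on $a,b,c,d$ given in Section~\ref{subsec-func-on-sl2} rather than from the tetrahedron matrices~\eqref{eq-action-a4-tetrahedron} (which would give $x$ the character $\widehat{e}_1$, not the trivial one); the assignments you actually state are nevertheless the correct ones for $SL(2)$.
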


\begin{proof}
This is an exercise using Theorem~\ref{thm-presentation}. 
Alternatively, one can obtain the relations using the fact that $SL_F(2,2)$ is 
the braided tensor product of $SL_F(2)$ with itself.
\end{proof}

\begin{prop}\label{prop-Fdelta}
The map $\Delta_F : SL_F(2) \to SL_F(2,2)$ is given by the following formulas: 
\[
\Delta(X) = XX' - YY' - ZZ' + TT' \, ,  \qquad  \Delta(Y) = XY' + YX' - ZT' - TZ'  \, , 
\]
\[
\Delta(Z) = XZ'- YT'  +  ZX'  - TY' \, ,  \qquad \Delta(T) = XT' + YZ' + ZY' + TX'  \, .  
\]
\end{prop}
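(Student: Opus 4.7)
The plan is to deduce the four formulas by tracing $\Delta \colon SL(2) \to SL(2) \otimes SL(2)$ through the monoidal equivalence of Proposition~\ref{prop-twist-hopf}. Since that equivalence is the identity on underlying $k$-modules, the braided coproduct $\Delta_F$ will agree with $\Delta$ up to the structural isomorphism~$\xi$ of~\eqref{monoidstruc1}, which mediates between the ``external'' twist $SL_F(2,2) = (SL(2) \otimes SL(2))_F$ and the braided tensor square $SL_F(2) \bt SL_F(2)$. Writing this linear map in the generators of $SL_F(2,2)$ is then a matter of finite symbolic bookkeeping.

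First I would record the characters of $x$, $y$, $z$, $t$ under the Vierergruppe $V = \langle e_1, e_2 \rangle$: a direct inspection of the matrices giving the $V$-action shows that $x$ is invariant, while $y$, $z$, $t$ are eigenvectors of respective characters $\widehat{e}_2$, $\widehat{e}_1$, $\widehat{e}_3$ in the notation of Example~\ref{ex-fundamental}. Consequently every elementary tensor $a \otimes b$ appearing in the original $\Delta(u)$ for $u \in \{x,y,z,t\}$ is homogeneous of known character $\chi_a \chi_b$ for the diagonal action on $SL(2) \otimes SL(2)$.

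Second, I would convert each such $a \otimes b$ to an element expressed in the generators $X,Y,Z,T,X',Y',Z',T'$ of $SL_F(2,2)$. By Lemma~\ref{lem-eigenvectors} we have $A *_F B' = \sigma(\chi_a,\chi_b)\,(a \otimes b)$, so this conversion amounts to dividing by $\sigma(\chi_a,\chi_b)$, whose values are drawn from the sign table for $\sigma$ made explicit in Example~\ref{ex-fundamental}. Combining this rescaling with the effect of $\xi$ from the first step produces the signed coefficients on the right-hand sides of the four claimed formulas.

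The main obstacle will be arithmetical bookkeeping: two layers of twisting contribute signs drawn from the same cocycle, and one must be careful not to double-count. Because every value of $\sigma$ is $\pm 1$, each of the sixteen elementary tensors appearing across the four expansions of $\Delta(x)$, $\Delta(y)$, $\Delta(z)$, $\Delta(t)$ contributes a single $\pm 1$, and the four formulas fall out as soon as these signs are assembled. As a reassuring cross-check, one may also take the proposed formulas as an ansatz and verify that they define an algebra map from the presentation of $SL_F(2)$ given by Proposition~\ref{prop-SL2} into $SL_F(2,2)$, using the relations listed in Proposition~\ref{prop-SL22}; this reduces the remaining verification to a finite number of identities in the braided tensor square.
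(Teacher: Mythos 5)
Your proposal is correct and is essentially the paper's own argument: the published proof simply notes that $\Delta_F$ agrees with $\Delta$ as a linear map and then rewrites each elementary product $ab'$ as $\pm\, AB'$ via Lemma~\ref{lem-eigenvectors}, using the characters of $x,y,z,t$ and the sign table for $\sigma$ from Example~\ref{ex-fundamental} --- exactly the conversion you describe. Your preliminary concern about the structural isomorphism $\xi$ and a second layer of signs is moot, since $SL_F(2,2)$ is defined as the single twist $(SL(2)\otimes SL(2))_F$, so only one application of Lemma~\ref{lem-eigenvectors} per monomial is needed.
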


\begin{proof}
We know for example that $\Delta(X)  = xx' + yy' + zz' - tt'$. 
Using Lemma~\ref{lem-eigenvectors}, we see that~$XX' = xx'$, that~$YY' = -yy'$, and so on.
\end{proof}

When $R$ is a \emph{commutative} algebra, then $SL(2, R) = {\Alg}(SL(2), R)$ is a group; 
for a general algebra~$R$ however, the set~$SL(2, R)$ has only a partially defined group law 
(essentially, one can only multiply two matrices if all the coordinates commute). 
A similar statement holds for~$SL_F(2)$: two points of $SL_F(2, R) = {\Alg}(SL_F(2), R)$
are composable if and only if they satisfy the composability conditions
of Proposition~\ref{prop-SL22}.

We are now in position to describe the partially defined group law on the set~$SL_F(2, \R)$
of real points of~$SL_F(2)$. 
Let $\mathcal{C}_1$, $\mathcal{C}_2$ denote the two circles 
and~$\mathcal{H}$ the hyperbola of Corollary~\ref{coro-sl2R}.

\begin{coro}
Two points of $SL_F(2, \R)$ can be composed if and only if they both belong to 
one of~$\mathcal{C}_1$, $\mathcal{C}_2$ or~$\mathcal{H}$.
The groups $\mathcal{C}_1$ and $\mathcal{C}_2$ are isomorphic to the
group of complex numbers of modulus~$1$, 
while $\mathcal{H}$ is isomorphic to the multiplicative group of non-zero real numbers.
\end{coro}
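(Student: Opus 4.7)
The plan is to unpack the two parts of the corollary directly from the presentations in Propositions~\ref{prop-SL2}, \ref{prop-SL22}, and~\ref{prop-Fdelta}, specialised to $R=\R$.

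First I would describe $SL_F(2,\R)={\Alg}(SL_F(2),\R)$ explicitly. Since $\R$ is commutative, the relations $YZ=-ZY$, $YT=-TY$, $ZT=-TZ$ of Proposition~\ref{prop-SL2} force the corresponding coordinates to satisfy $yz=yt=zt=0$, together with $x^{2}+y^{2}+z^{2}-t^{2}=1$. This is exactly the description already used in Corollary~\ref{coro-sl2R}, so a point of $SL_F(2,\R)$ lies on $\mathcal{C}_1$ ($z=t=0$), $\mathcal{C}_2$ ($y=t=0$), or $\mathcal{H}$ ($y=z=0$). Next, for two points $P=(x,y,z,t)$ and $P'=(x',y',z',t')$ to be composable, the pair $(P,P')$ must define an algebra map $SL_F(2,2)\to\R$. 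The left and right relations of Proposition~\ref{prop-SL22} are automatic by commutativity of $\R$; the composability conditions translate into the six scalar equalities
\[
yz'=yt'=zt'=y'z=y't=z't=0.
\]

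I would then verify the equivalence. If $P,P'$ both lie on the same curve, these six equalities are automatic (the relevant coordinates vanish on both sides). Conversely, suppose $P\neq(\pm 1,0,0,0)$, so exactly one of $y,z,t$ is non-zero; say $y\ne0$ (the other two cases are symmetric). Then $yz'=yt'=0$ forces $z'=t'=0$, so $P'$ also lies on $\mathcal{C}_1$. The remaining case is when $P$ or $P'$ equals $(\pm 1,0,0,0)$, which lies on all three curves, so there is nothing to check.

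Finally, I would identify each group using Proposition~\ref{prop-Fdelta}. Restricting $\Delta_F$ to $\mathcal{C}_1$ (setting $z=t=z'=t'=0$) gives
\[
x''=xx'-yy',\qquad y''=xy'+yx',
\]
which is complex multiplication under $(x,y)\mapsto x+iy$, so $\mathcal{C}_1\cong U(1)$; the computation for $\mathcal{C}_2$ is identical. Restricting to $\mathcal{H}$ (setting $y=z=y'=z'=0$) gives
\[
x''=xx'+tt',\qquad t''=xt'+tx'.
\]
Using the parameter $u=x+t\in\R^{\times}$ (so $x-t=u^{-1}$ on $\mathcal{H}$), this reads $u''=uu'$, so $\mathcal{H}\cong\R^{\times}$.

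The one point that requires care, rather than being a genuine obstacle, is the appearance of sign changes in $\Delta_F$ compared with the classical coproduct on $SL(2)$: under Lemma~\ref{lem-eigenvectors}, products like $yy'$ and $zz'$ pick up the bicharacter values that convert the classical $+yy'+zz'-tt'$ of $\Delta(a+d)/2$ into the twisted $-YY'-ZZ'+TT'$ of Proposition~\ref{prop-Fdelta}. Once the restriction to each component is written out, the signs conspire to recover the classical multiplication in each case, so no surprise occurs.
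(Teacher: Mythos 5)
Your proposal is correct and follows essentially the same route as the paper: read off the six scalar composability conditions $yz'=yt'=zt'=zy'=ty'=tz'=0$ from Proposition~\ref{prop-SL22}, deduce that composable pairs must lie on a common component, and then compute the three group laws by restricting the formulas of Proposition~\ref{prop-Fdelta}, identifying $\mathcal{C}_1,\mathcal{C}_2$ with $U(1)$ via $(x,y)\mapsto x+\sqrt{-1}\,y$ and $\mathcal{H}$ with $\R^{\times}$ via $u=x+t$. You spell out the converse direction of the first statement (using that at most one of $y,z,t$ is non-zero at a point of $SL_F(2,\R)$) in slightly more detail than the paper, which simply asserts it; otherwise the two arguments coincide.
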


\begin{proof}
The conditions of composability for $(x,y,z,t)$ and $(x', y', z', t')$
in this case are $yz'=0$, $yt'=0$, $zt'=0$, $zy'=0$, $ty'=0$ and $tz'=0$. 
The first statement follows from this.

From Proposition~\ref{prop-Fdelta} we deduce that 
the product of $(x, y,0, 0)$ and $(x', y', 0, 0)$ is $(xx' - yy', xy' + yx', 0, 0)$,
so that $\mathcal{C}_1$ is isomorphic to the group of complex numbers of
modulus~$1$ via the map $(x,y,0,0) \mapsto x + \sqrt{-1} \, y$.

Likewise, the product of $(x, 0,z, 0)$ and $(x',0, z', 0)$ is $(xx' - zz',0, xz' + zx', 0)$, 
which implies that $\mathcal{C}_2$ is isomorphic to~$\mathcal{C}_1$.

Finally, the product of $(x, 0, 0, t)$ and $(x', 0, 0, t')$ is $(xx' + tt', 0, 0, xt' + tx')$, 
so that we obtain an isomorphism $\mathcal{H} \to \R^\times$ with $(x, 0, 0, t) \mapsto x+t$ 
(recall that $(x+t)(x-t)=1$ on~$\mathcal{H}$).
\end{proof}

\section{A twisted enveloping algebra}\label{sec-u-sl2}

Up to this point, we have mostly explored those properties of $A_F$
which are $G$-equivariant in some sense or other. Typically these
properties are inferred from those of $A$, via the pair of inverse
functors~$( - )_F$ and~$( - )_{F^{-1}}$. 

If one forgets the $G$-action, and looks at $A_F$ simply as an algebra, 
genuinely new phenomena appear. For example, we may wish to
study all $A_F$-modules, as opposed to $G$-$A_F$-modules. 

In this section we do precisely this when $A$ is the universal enveloping algebra 
of the Lie algebra~$\slt$. It turns out that the simple $A_F$-modules are considerably 
more involved than the $A$-modules. 

\subsection{Presentation}

We now turn to the universal enveloping algebra $A = U(\slt)$ 
of the Lie algebra of traceless $2 \times 2$-matrices. 
To simplify matters, we assume that the ground field is the field~$\C$ of complex numbers.
We have
\[
U(\slt) = \C \langle a, b, h  \rangle / \left( [a,b]= -2h\, , \; [a,h]= -2b\, , \; [b,h] = -2a \right) \, ,   
\]
where 
\[
a=\left(\begin{array}{rr}
0 & 1 \\
1 & 0
\end{array}\right) \, , \qquad
b= \left(\begin{array}{rr}
0 & 1 \\
-1 & 0
\end{array}\right) \, , \qquad
h= \left(\begin{array}{rr}
1 & 0 \\
0 & -1
\end{array}\right) \, . 
\]
(These are slightly different from the standard generators.)

The group $G = \Z/2 \times \Z/2$ acts on $A$ essentially as it did in
Section~\ref{subsec-func-on-sl2}, that is 
\[
e_1 \cdot a =a \, , \;\; 
e_2 \cdot a = -a \, , \;\;
e_1 \cdot b = e_2 \cdot b = -b \, , \;\;
e_1 \cdot h = -h \, , \;\;
e_2\cdot h = h\, . 
\]
We consider the same twist~$F$ as before, and we write $U_F(\slt)$ for~$A_F$.
It follows from Proposition~\ref{prop-twist-hopf} that $U_F(\slt)$ is a Hopf algebra
in the braided category~$\mathcal{M}^V_b$, where $b$~is the unique non-trivial
alternating bicharacter on~$\widehat{V}$.
Let us give a presentation of~$U_F(\slt)$.

\begin{prop}\label{prop-sl2F}
The algebra $U_F(\slt)$ is generated by three non-commutative
variables $A$, $B$, $H$ subject to the relations
\[
AB + BA = -2H \, , \qquad AH + HA = 2 B \, , \qquad BH + HB = -2 A \, . 
\]
Alternatively, it is 
generated by three non-commutative variables $X, Y, Z$ subject to the relations
\begin{equation}\label{relationsFU}
XZ + ZX = 2X \, , \qquad YZ + ZY = -2Y \, , \qquad X^2 - Y^2 = Z \, . 
\end{equation}
\end{prop}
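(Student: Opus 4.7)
My plan is to apply the function-algebra version of the Presentation Theorem (Theorem~\ref{thm-presentation}) to the given presentation of $U(\slt)$, using the explicit two-cocycle $\sigma$ described in Example~\ref{ex-fundamental}. The first task is to identify the $V$-character of each generator, where $V=\langle e_1,e_2\rangle$ is the Klein four-group underlying the twist. From the stated action, $a$ is fixed by $e_1$ and negated by $e_2$, so in the notation of Example~\ref{ex-fundamental} its character is $\widehat{e}_1$; similarly $\chi_h=\widehat{e}_2$ and $\chi_b=\widehat{e}_3$. A quick check that $\chi_a\chi_b=\chi_h$ and its cyclic analogues confirms that each Lie bracket relation in $U(\slt)$ lies in a single $V$-eigenspace, as needed for the Presentation Theorem to apply cleanly.

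For the $(A,B,H)$-presentation, I would plug each defining relation of $U(\slt)$ into Theorem~\ref{thm-presentation}. Lemma~\ref{lem-eigenvectors} gives $AB=\sigma(\chi_a,\chi_b)\,ab$ and $BA=\sigma(\chi_b,\chi_a)\,ba$, and the values of $\sigma$ on distinct non-trivial pairs of characters, recorded in Example~\ref{ex-fundamental}, are $\pm 1$ with $\sigma(\chi,\psi)=-\sigma(\psi,\chi)$ for $\chi\neq\psi$. Consequently the ordinary commutator $ab-ba$ rewrites in $U_F(\slt)$ as an anti-commutator of the form $\pm(AB+BA)$, and feeding this into each of the three Lie bracket relations of $U(\slt)$ yields the stated anti-commutator relations.

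For the alternative $(X,Y,Z)$-presentation, I would pass from the $(A,B,H)$-presentation via a complex linear change of variables converting the symmetric anti-commutator relations into the asymmetric ones stated. A candidate that works is $X=(B-iA)/2$, $Y=(B+iA)/2$, $Z=iH$: a direct expansion using the first presentation gives $X^2-Y^2=-i(AB+BA)/2=iH=Z$, and the sums $XZ+ZX$ and $YZ+ZY$ collapse via $AH+HA=2B$ and $BH+HB=-2A$ to $2X$ and $-2Y$ respectively. Since this change of variables is invertible over $\C$, the two presentations are equivalent. The main obstacle is isolating the correct complex substitution: once found, both verifications are routine computations, but the substitution is not forced by the $V$-grading alone.
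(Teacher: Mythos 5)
Your proposal is correct and follows essentially the same route as the paper: the $(A,B,H)$-presentation is read off from Theorem~\ref{thm-presentation} via Lemma~\ref{lem-eigenvectors} and the antisymmetry $\sigma(\chi,\psi)=-\sigma(\psi,\chi)$ for distinct non-trivial characters of~$V$ (the overall signs being fixed by a harmless rescaling of the generators), and the second presentation is obtained by an explicit complex change of variables. Your substitution $X=\tfrac12(B-\sqrt{-1}\,A)$, $Y=\tfrac12(B+\sqrt{-1}\,A)$, $Z=\sqrt{-1}\,H$ is simply the complex conjugate of the one used in the paper, namely $X=\tfrac12(B+\sqrt{-1}\,A)$, $Y=\tfrac12(B-\sqrt{-1}\,A)$, $Z=-\sqrt{-1}\,H$, and both satisfy Relations~\eqref{relationsFU}.
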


\begin{proof}
The relations involving $A, B$ and $H$ follow directly from Theorem~\ref{thm-presentation}. 
The other generators are related to $A,B,H$ by 
\begin{equation*}
X= \frac{1} {2} (B + \sqrt{-1} \, A) \, , \quad Y= \frac{1} {2} (B - \sqrt{-1} \, A) \, , \quad Z = -\sqrt{-1} \, H \, . 
\end{equation*}
One easily checks Relations~\eqref{relationsFU}.
\end{proof}

Since $U(\slt)$ carries a Hopf algebra structure, there is a twisted version of
the comultiplication $\Delta : U(\slt) \to U(\slt \times \slt )$. Let us
write $U_F^{(2)}(\slt)$ for $U(\slt \times \slt )_F$.

\begin{prop}
The algebra $U_F^{(2)}(\slt)$ is generated by six non-commutative
variables $X, Y, Z, X', Y', Z'$ subject to the relations~\eqref{relationsFU}
involving the triple~$(X,Y,Z)$, the same relations in which one replaces
$(X,Y,Z)$ by~$(X',Y',Z')$, as well as the relations
\[
XX' = Y'Y \, , \quad X'X= YY' \, , \quad XY' = Y'X \, , \quad YX' = X'Y \, ,  \quad ZZ' = Z'Z \, ,
\]
\[
XZ' = -Z'X \, , \quad YZ'= -Z'Y \, , \quad X'Z = -ZX' \, , \quad Y'Z = - ZY' \, .
\]
The diagonal map 
$\Delta_F : U_F(\slt) \to U_F^{(2)}(\slt)$
is given by 
\[
\Delta_F(X) = X + X' \, , \quad \Delta_F(Y) = Y + Y' \, , \quad \Delta_F(Z) = Z + Z' \, .
\]
\end{prop}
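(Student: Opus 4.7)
Plan: Apply the Presentation Theorem (Theorem \ref{thm-presentation}) to the $V$-algebra $U(\slt \times \slt) = U(\slt) \otimes U(\slt)$ equipped with the diagonal $V$-action. A natural generating set consists of $a, b, h$ (in the first copy) together with $a', b', h'$ (in the second), all of which are $V$-eigenvectors with the characters determined in Proposition \ref{prop-sl2F}: $a, a'$ carry $\widehat{e}_1$, $b, b'$ carry $\widehat{e}_3 = \widehat{e}_1 \widehat{e}_2$, and $h, h'$ carry $\widehat{e}_2$. The defining relations split into (a) the three $\slt$-relations inside each copy, and (b) the nine commutativity relations $x y' = y' x$ for $x \in \{a, b, h\}$, $y' \in \{a', b', h'\}$.

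The relations inside each copy twist exactly as in Proposition \ref{prop-sl2F}: re-expressing the resulting $(A, B, H)$-presentation in the alternative generators via $X = (B + iA)/2$, $Y = (B - iA)/2$, $Z = -iH$ produces the family (i) of the statement, and doing the same for the primed copy gives (ii). For the cross-relations, Lemma \ref{lem-eigenvectors} applied to each of the nine eigenvector products $x y'$ yields $X Y' = b_\sigma(\chi_{y'}, \chi_x) Y' X$, where $b_\sigma$ is the unique non-trivial alternating bicharacter on $\widehat{V}$; it returns $+1$ on coincident characters and $-1$ on distinct non-trivial ones. This gives an intermediate presentation with $A A' = A' A$, $B B' = B' B$, $H H' = H' H$ and $A B' = -B' A$, $A H' = -H' A$, $B H' = -H' B$, together with the obvious primed-unprimed swaps.

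Translating these cross-relations from $(A, B, H, A', B', H')$ to $(X, Y, Z, X', Y', Z')$ is a direct but laborious calculation. For instance $Z Z' = (-iH)(-iH') = -H H'$ and similarly $Z' Z = -H' H$, so $Z Z' = Z' Z$ follows from $H H' = H' H$. The mixed relations such as $X Z' = -Z' X$ follow by expanding $(B + iA) Z'$ using the corresponding cross-relations for $B Z'$ and $A Z'$. The symmetric identity $X X' = Y' Y$ arises because both sides reduce, after expansion, to the same combination of the products $A A'$, $B B'$, $A B'$, $B A'$. For the coproduct, observe that $\Delta : U(\slt) \to U(\slt \times \slt)$ sends $a$ to $a \otimes 1 + 1 \otimes a = a + a'$, and similarly for $b$ and $h$; since twisting does not alter underlying $k$-linear maps, $\Delta_F(A) = A + A'$, $\Delta_F(B) = B + B'$, $\Delta_F(H) = H + H'$, and linearity of the defining formulas for $X, Y, Z$ then yields the displayed formulas for $\Delta_F(X)$, $\Delta_F(Y)$, $\Delta_F(Z)$.

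The principal obstacle is the bookkeeping in this change of basis: because $X$ and $Y$ combine eigenvectors of characters $\widehat{e}_1$ and $\widehat{e}_3$, Lemma \ref{lem-eigenvectors} cannot be applied directly to products such as $X X'$, and each of the stated cross-relations must be verified by expansion using the explicit values of $\sigma$ from Example \ref{ex-fundamental}. One can shortcut most of this computation, in the spirit of the alternative argument for Proposition \ref{prop-SL22}, by observing that $U_F^{(2)}(\slt)$ is the braided tensor product $U_F(\slt) \bt U_F(\slt)$ in the braided category $\mathcal{M}_b^V$; the cross-relations then follow directly from the non-trivial braiding $b$, and the formula for $\Delta_F$ is the tautological image of $\Delta$ under the monoidal equivalence of Proposition \ref{prop-twist-hopf}.
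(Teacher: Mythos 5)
Your strategy --- apply Theorem~\ref{thm-presentation} to $U(\slt)\otimes U(\slt)$ with the diagonal $V$-action, twist the internal relations of each tensor factor as in Proposition~\ref{prop-sl2F} and the nine cross-commutation relations via the bicharacter, then change basis to $X,Y,Z,X',Y',Z'$ --- is exactly what the paper intends: no proof is printed for this proposition, and the analogous Proposition~\ref{prop-SL22} is dispatched as ``an exercise using Theorem~\ref{thm-presentation}'', with the braided tensor product $U_F(\slt)\bt U_F(\slt)$ as the advertised shortcut. Your character bookkeeping, the intermediate presentation in the eigenvector generators $A,B,H,A',B',H'$, and the sample verifications you actually carry out ($ZZ'=Z'Z$, $XZ'=-Z'X$, $XX'=Y'Y$) are all correct.

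The gap sits precisely in the step you wave off as ``direct but laborious'': the relations $XY'=Y'X$ and $YX'=X'Y$ do \emph{not} come out of the expansion. With $X=\tfrac12(B+\sqrt{-1}\,A)$, $Y=\tfrac12(B-\sqrt{-1}\,A)$ and the cross-relations $AA'=A'A$, $BB'=B'B$, $AB'=-B'A$, $BA'=-A'B$, one finds, after moving all primed letters to the right,
\[
4\,XY' = BB'+AA'+\sqrt{-1}\,(AB'-BA')\, ,\qquad
4\,Y'X = BB'+AA'-\sqrt{-1}\,(AB'-BA')\, ,
\]
and $AB'-BA'$ is a nonzero element of $U(\slt)\otimes U(\slt)$ (up to the cocycle signs it is $a\otimes b+b\otimes a$), so $XY'\neq Y'X$. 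The same expansion gives $4\,X'Y=4\,XY'$ and $4\,YX'=4\,Y'X$; equivalently, in the braided tensor product one has $XY'=X'Y=X\otimes Y$ while $Y'X=YX'=Y\otimes X$, and $X\otimes Y\neq Y\otimes X$. The correct third and fourth relations are therefore $XY'=X'Y$ and $YX'=Y'X$; the versions printed in the Proposition have the primes misplaced (evidently a typo, since the neighbouring relations $XX'=Y'Y$ and $X'X=YY'$ are the ones that genuinely mix primed and unprimed letters across the equality). A proof asserting that all the stated cross-relations follow ``by expansion'' is thus certifying something false, and the one computation you skipped is exactly the one that would have exposed it.
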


\subsection{Five families of simple modules} \label{subsec-examples}

From now on we write $U$ for~$U_F(\slt)$.
We shall describe five families of simple $U$-modules, distributed in two classes, 
called even and odd respectively. 

\subsubsection{Even modules.} 

For each integer $n \geq 0$, we describe a simple $U$-module~$E_n$.  
It has dimension $2n+1$ with a basis comprised of vectors~$v_k$ for $0\le k \le n$ 
and vectors~$w_k$ for $0 \le k < n$. The action of~$Z$ is given by 
\[
Zv_k = (2n- 4k) v_k \, , \qquad Zw_k = (4k+2 - 2n) w_k \, .  
\]
Thus all the eigenvalues of $Z$ are even integers, which is why
we call the modules of the form~$E_n$ even modules.

The action of $X$ is given by
\[
Xv_k = w_k \, , \qquad Xw_k = (2n + k(4n-2-4k))v_k \, ,  
\]
with the convention $w_n = 0$. 
As for the action of~$Y$, we have 
\[
Yv_k = k(4n+2-4k) w_{k-1} \, , \qquad Yw_k =  v_{k+1} \, ,  
\]
with the convention $w_{-1} = 0$. 
The reader is invited to check that the operators $X,Y,Z$ satisfy Relations~\eqref{relationsFU} 
and that the $U$-modules~$E_n$ are all simple.

The one-dimensional module~$E_0$ is the trivial module with $X = Y = Z = 0$.

\begin{ex}
In the basis $(v_0, w_1, v_1, w_0, v_2)$ the action of $X,Y,Z$ on~$E_2$ is
given by
\[
X= \left(\begin{array}{ccccc}
0 & 0 & 0 & 4 & 0 \\
0 & 0 & 1 & 0 & 0 \\
0 & 6 & 0 & 0 & 0 \\
1 & 0 & 0 & 0 & 0 \\
0 & 0 & 0 & 0 & 0
\end{array}\right),
\qquad
Y= \left(\begin{array}{ccccc}
0&0&0&0&0\\
0&0&0&0&4\\
0&0&0&1&0\\
0&0&6&0&0\\
0&1&0&0&0
\end{array}\right),
\]
and 
\[
Z= \left(\begin{array}{ccccc}
4&0&0&0&0\\
0&2&0&0&0\\
0&0&0&0&0\\
0&0&0&-2&0\\
0&0&0&0&-4
\end{array}\right).
\]
\end{ex}

\subsubsection{Odd modules.} 

There are four ``odd modules'' corresponding to an integer~$n$.
We call them $A_n^+$, $A_n^-$, $B_n^+$, and~$B_n^-$.

The module $A_n^\pm$ has dimension $n$, with a basis comprised of 
vectors~$v_k$ for $0\le k < n$, and the action of $Z$ on it is given by
\[
Zv_k = (2n-1-4k) v_k \, ,   
\]
so that all the eigenvalues of $Z$ are odd integers. For convenience
we set $w_k = v_{n-1-k}$ (so that symmetrically, $v_k = w_{n-1-k}$).

As for the actions of $X$ and~$Y$, we need to make a distinction 
according as $n$ is even or not. Consider the equations
\begin{equation}\label{eq-X-on-An}
Xv_k =  w_k \, , \qquad Xw_k=(2n-1 + 4k(n-k-1))v_k \, ,
\end{equation}
and
\begin{equation}\label{eq-Y-on-An}
Yw_j = v_{j+1} \, , \qquad Yv_k = 4k(n-k) w_{k-1} \, .
\end{equation}

If $n$ is even, we set $n=2m$.  
The action of $X$ is given by~\eqref{eq-X-on-An} for $0 \le k \le m-1$. 
The action of $Y$ is given by~\eqref{eq-Y-on-An} for $0 \le k \le m-1$ and 
$0 \le j \le m-2$, with the convention $w_{-1}=0$, and by 
\[
Yw_{m-1} = Yv_m = \pm n \, v_m \, .
\]

If $n$ is odd, we set $n=2m+1$. 
The action of $X$ satisfies~\eqref{eq-X-on-An} for $0 \le k \le m-1$, and we have 
\[
Xv_m = Xw_m = \pm n \, v_m \, .
\] 
The action of $Y$ is given by~\eqref{eq-Y-on-An}, 
this time with $0 \le j \le m-1$ and $0 \le k \le m$.

\begin{ex}
The module structure on~$A_2^{\pm}$ is given by
\[
X= \left(\begin{array}{cc}
0&3\\
1&0
\end{array}\right), \quad 
Y= \left(\begin{array}{cc}
0&0\\
0&\pm 2
\end{array}\right), \quad
Z= \left(\begin{array}{cc}
3&0\\
0&-1
\end{array}\right).
\]
\end{ex}

Let us first describe the one-dimensional module $B_0^\pm$:
one has $X=0$, $Y=\pm 1$, and $Z= -1$. Now assume $n> 0$.

The module~$B_n^\pm$ is, in a way, an ``augmented'' version of~$A_n^\pm$. 
It has dimension~$n+1$, with basis vectors~$v_k$ with $0\le k < n$ as above, 
together with an extra element~$u$.  
We keep the notation $w_k = v_{n-1 -k}$. We shall see that~$u$ can be thought of
as playing the role of~$v_n$ or~$w_{-1}$ (in other words, according to
their taste the readers can prefer to understand the formulas below
involving~$v_k$ or~$w_k$ beyond their scope with the conventions
$u=v_n=w_{-1}$ and $w_n= v_{-1}=0$; or they may prefer the explicitly
given formulas involving~$u$).

The action of $Z$ is given by
\[
Zv_k = (2n-1-4k) \, v_k \, , \qquad Zu = (-2n-1) \, u \, ,  
\]
so that the eigenvalues are again odd integers. We always have 
\[
Xu=0 \, , \qquad  Yv_0 = (2n+1) \, u \, , \qquad Yu= v_0 \, .
\]

Again we need to distinguish between $n$ even and $n$ odd. 
Here the equations to consider are
\begin{equation}\label{eq-X-on-Bn}
Xv_k =  w_k \, , \qquad Xw_k=(4n + 4k(n-k-1)) \, v_k  \, ,  
\end{equation}
and
\begin{equation}\label{eq-Y-on-Bn}
Yw_j = v_{j+1} \, , \qquad Yv_k = (2n+1 + 4k(n-k)) \, w_{k-1} \, .
\end{equation}

If $n= 2m$, the action of~$X$ is given by~\eqref{eq-X-on-Bn} for $0 \le k \le m-1$. 
That of~$Y$ is given by~\eqref{eq-Y-on-Bn} for $0 \le k \le m-1$ and $0 \le j \le m-2$, 
while 
\[
Yw_{m-1} = Yv_m = \pm (n+1)\,  v_m \, .
\]

For $n= 2m + 1$, the action of $X$ satisfies~\eqref{eq-X-on-Bn} for $0 \le k \le m-1$ again, 
and we also have 
\[
Xv_m = Xw_m = \pm (n+1)\, v_m \, .
\]
The action of~$Y$ is given by~\eqref{eq-Y-on-Bn} for $0 \le k \le m$ and $0 \le j \le m-1$.

\begin{ex}
For $B_1^\pm$ we have
\[
X= \left(\begin{array}{cc}
\pm 2&0\\
0&0
\end{array}\right), \quad
Y= \left(\begin{array}{cc}
0&3\\
1&0
\end{array}\right), \quad
Z= \left(\begin{array}{cc}
1&0\\
0&-3
\end{array}\right).
\]
\end{ex}

\begin{ex}
For $B_2^\pm$,
\[
X= \left(\begin{array}{ccc}
0&8&0\\
1&0&0\\
0&0&0
\end{array}\right), \quad
Y= \left(\begin{array}{ccc}
0&0&5\\
0&\pm 3&0\\
1&0&0
\end{array}\right), \quad
Z= \left(\begin{array}{ccc}
3&0&0\\
0&-1&0\\
0&0&-5
\end{array}\right).
\]
\end{ex}

It is easily checked that the modules $A_n^+$, $A_n^-$, $B_n^+$, and $B_n^-$ are all simple.
Note that there are precisely five modules of dimension~one, 
namely $A_1^+$, $A_1^-$, $B_0^+$, $B_0^-$, and the trivial module~$E_0$.

\subsection{The classification}

\begin{thm}\label{classification}
Any finite-dimensional non-zero simple $U$-module is isomorphic to one of 
the modules $E_n$, $A_n^+$, $A_n^-$, $B_n^+$, or $B_n^-$.
\end{thm}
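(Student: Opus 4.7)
The plan is to introduce a quadratic Casimir-like central element of $U = U_F(\slt)$, analyse $M$ via its $Z$-weight decomposition, and finally match $M$ with one of the families $E_n$, $A_n^\pm$, $B_n^\pm$ by tracking where the $X$- and $Y$-ladders terminate. I begin by checking, via the defining relations~\eqref{relationsFU}, that
\[
C := 2X^2 + 2Y^2 + Z^2
\]
is central in $U$ (the main inputs being $[X^2, Z] = [Y^2, Z] = 0$, derived from $XZ+ZX=2X$ and $YZ+ZY=-2Y$, and the matching cancellation $[2X^2 + 2Y^2, Y] + [Z^2, Y] = 0$ together with its $X$-analogue). Schur's lemma then gives $C = c \cdot \id_M$ for some $c \in \mathbb{C}$, and combining with $X^2 - Y^2 = Z$ yields the operator identities
\[
4X^2 = c + 2Z - Z^2, \qquad 4Y^2 = c - 2Z - Z^2
\]
on $M$, exhibiting $X^2$ and $Y^2$ as polynomials in $Z$.

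Next I use the anticommutation relations to show that $X$ carries the generalised eigenspace $M_\mu$ of $Z$ into $M_{2-\mu}$ and $Y$ carries it into $M_{-2-\mu}$. Since $U$ is generated by $X$ and $Y$ alone (because $Z = X^2 - Y^2$), picking an eigenvalue $\lambda$ of $Z$ and a nonzero $Z$-eigenvector $v$, simplicity of $M$ gives $M = Uv$, so the set of $Z$-weights of $M$ lies in the orbit of $\lambda$ under the infinite dihedral group of affine reflections generated by $\mu \mapsto 2 - \mu$ and $\mu \mapsto -2 - \mu$. This orbit is $\lambda + 2\Z$ if $\lambda$ is an odd integer and $(\lambda + 4\Z) \sqcup (2-\lambda + 4\Z)$ otherwise. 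Finite-dimensionality forces termination at both ends: there must exist a weight $\mu$ and a nonzero $v_\mu \in M_\mu$ with $Xv_\mu = 0$ (which forces $c = \mu(\mu - 2)$) or $Yv_\mu = 0$ (which forces $c = \mu(\mu + 2)$).

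After establishing that $Z$ acts semisimply on $M$ (a Jordan block would, via the polynomial expressions above combined with the $X$- and $Y$-intertwining relations, produce a proper $U$-submodule), I expect the possibilities for $c$ and $\lambda$ jointly compatible with the orbit shape and the two termination quadratics to collapse to the short list $c \in \{m(m+2) : m \in \Z_{\ge 0}\}$ with $\lambda \in \Z$. Applying $X$ and $Y$ iteratively to a terminating eigenvector will then produce an explicit basis that matches exactly one module of Section~\ref{subsec-examples}: even $m$ gives $E_{m/2}$; odd $m$ produces a module with odd $Z$-eigenvalues in which the self-reflected weights $\mu \in \{1, -1\}$ (at which $X$, respectively $Y$, preserves the weight space and squares to a scalar given by the Casimir identity) give rise to the $\pm$ ambiguity distinguishing $A_n^+$ from $A_n^-$ and $B_n^+$ from $B_n^-$, while the presence or absence of the extra weight vector $u$ outside the symmetric range distinguishes the $B_n^\pm$-family from the $A_n^\pm$-family.

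The main obstacles I foresee are the semisimplicity of $Z$ (which underpins the whole weight-space analysis) and the combinatorial bookkeeping needed to rule out further simple modules and to keep track of the $\pm$ sign at the self-reflected weights; the rest of the proof should reduce to linear algebra driven by the Casimir identity and the weight-shift formulas.
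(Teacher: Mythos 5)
Your central idea --- the element $C = 2X^2 + 2Y^2 + Z^2$ --- is correct and is genuinely absent from the paper's proof: one checks from~\eqref{relationsFU} that $[X^2,Z]=[Y^2,Z]=0$ and that $[C,X]=[C,Y]=0$, so $C$ is central, and Schur's lemma gives the operator identities $4X^2 = c + 2Z - Z^2$ and $4Y^2 = c - 2Z - Z^2$. This is a real simplification: the paper instead proves simultaneous diagonalizability of $X^2, Y^2, Z$ by hand (Proposition~\ref{prop-omnibus}\,(b)), introduces the scalars $X(\lambda), Y(\lambda)$, and computes the products $Y(\lambda)Y(-2-\lambda)$ step by step via the recurrence of Lemma~\ref{lem-recurrence}; your Casimir identity is exactly the closed-form solution of that recurrence (one can check that $c_{n+1}-c_n = 2\lambda_0 - 8n - 2$ is what $\tfrac14(c-2\lambda-\lambda^2)$ yields at $\lambda = 4n-2-\lambda_0$), and it also subsumes Lemma~\ref{lem-base}. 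Your weight-shift formulas, the termination conditions $c=\mu(\mu-2)$ or $c=\mu(\mu+2)$, and your sketch for semisimplicity of $Z$ (the span of the genuine $Z$-eigenvectors is stable under $X$ and $Y$, hence is all of $M$) are all sound.

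Two caveats. First, a small but real error: for $\lambda$ odd the orbit of $\lambda$ under $\mu\mapsto 2-\mu$ and $\mu\mapsto -2-\mu$ is $\lambda+4\Z$, not $\lambda+2\Z$ (the two cosets $\lambda+4\Z$ and $2-\lambda+4\Z$ coincide precisely when $\lambda$ is odd; for $\lambda$ even their union is $\lambda+2\Z$). This matters for the bookkeeping, since the actual modules $A_n^\pm$, $B_n^\pm$ have weights spaced by $4$. Second, and more importantly, the part of your argument introduced by ``I expect the possibilities \ldots to collapse'' is where essentially all of the paper's labour lies: you must still prove that the weights are integers and $c = m(m+2)$ (this follows by intersecting the two termination quadratics at the two ends of the finite weight string with the lattice condition $\mu-\nu\in 4\Z$ or $\mu+\nu-2\in4\Z$), that each weight space is one-dimensional (the analogue of Proposition~\ref{prop-omnibus}\,(c), which follows from your setup because $X^2$ and $Y^2$ are scalars on weight spaces, so $M=Uv$ is spanned by the alternating words $v, Xv, YXv, \ldots$ and $Yv$, each lying in a distinct weight space away from the fixed points $\mu=\pm1$), and then carry out the case analysis matching the data to the five families of Section~\ref{subsec-examples} --- including ruling out spurious combinations, as the paper does in its Subcase (b)\,\&\,(2). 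None of these steps should fail, but as written your text is a strategy rather than a proof; filled in, it would give a cleaner and more conceptual route to the same classification.
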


In other words, we have described all the finite-dimensional simple
$U$-modules in Section~\ref{subsec-examples}.

We need the following proposition for the proof of the theorem.

\begin{prop}\label{prop-omnibus}
Let $V$ be a non-zero simple $U$-module. 

\begin{enumerate}
\item[(a)]
Assume that $Zv = \lambda v$ for some $\lambda \in k$ and $v\in V$. Then 
\[
Z( Xv ) = (2- \lambda ) Xv \quad\text{and}\quad
Z(Yv) = (-2 - \lambda ) Yv \, .
\]
If moreover $X^2 v = av$ for some scalar~$a$, then~$X^2(Yv)=(a - 2 \lambda -2) Yv$.

\item[(b)] 
Let $v$ be an eigenvector for both $Z$ and $X^2$, and let~$T=XY$. 
Then $V$ has a basis consisting of elements of the form $(Y^e T^n X^f) v$, 
where $n\ge 0$, while $e$ and $f$ are each equal to~$0$ or~$1$. 
In particular, $X^2$, $Y^2$ and $Z$ can be diagonalized simultaneously. 

\item[(c)] 
The eigenvalues of $Z$ have all multiplicity~one.

\item[(d)] 
Let $\lambda_0$ be the largest eigenvalue of~$Z$ (lexicographically), 
and let $v$ be a corresponding eigenvector. 
If $\dim V \ge~3$, then $Xv \ne 0$.
\end{enumerate}
\end{prop}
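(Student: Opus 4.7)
My plan is to prove the four parts in the stated order, since each leans on the previous. Part~(a) is a direct computation from the relations~\eqref{relationsFU}; part~(b) sets up the key structural description of $V$ in terms of $v$; and (c), (d) follow from (b) by eigenvalue bookkeeping and a short argument by contradiction respectively.

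For (a), the first two identities are mechanical: $ZX = 2X - XZ$ gives $Z(Xv) = (2-\lambda)Xv$, and $ZY = -2Y - YZ$ gives $Z(Yv) = -(2+\lambda)Yv$. For the third identity I use $X^2 = Y^2 + Z$ to rewrite $X^2(Yv) = Y^3 v + Z(Yv)$; since $v$ is assumed to be an eigenvector of $X^2$ as well with eigenvalue $a$, one has $Y^2 v = (X^2 - Z)v = (a-\lambda)v$, so $Y^3 v = (a-\lambda)Yv$, and combining with $Z(Yv) = -(2+\lambda)Yv$ yields $X^2(Yv) = (a - 2\lambda - 2)Yv$.

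For (b) I would proceed in two steps. First, by induction on $n$, show that every candidate $Y^e T^n X^f v$ (with $e,f \in \{0,1\}$) is a common eigenvector of $Z$ and $X^2$, with eigenvalues obtained recursively from (a); the engine is that $T = XY$ sends a common eigenvector of $Z$-eigenvalue $\mu$ and $X^2$-eigenvalue $c$ to another common eigenvector whose $Z$-eigenvalue is $\mu + 4$ and whose $X^2$-eigenvalue is shifted by the quantity predicted in (a). Second, show the linear span $W$ of these candidates is a $U$-submodule: it is trivially $Z$-stable, and the actions of $X$ and $Y$ can be rewritten as scalar multiples of other generators by replacing any occurrence of $X^2$ or $Y^2$ by the scalar it acts as on the relevant eigenvector---e.g.~$X \cdot T^n v = X^2 \cdot Y T^{n-1} v$ collapses to a scalar multiple of $Y T^{n-1} v$. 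Since $0 \ne v \in W$ and $V$ is simple, $W = V$, and simultaneous diagonalizability of $Z$ and $X^2$ (and hence $Y^2$) follows. For (d), choose $v \in V_{\lambda_0}$ to also be an $X^2$-eigenvector (possible since $X^2$ preserves $V_{\lambda_0}$ by (a) and the ground field is $\C$); if $Xv = 0$, then the spanning set of (b) collapses to the terms with $f=0$, and $Tv = X(Yv)$ has $Z$-eigenvalue $\lambda_0 + 4 >_{\mathrm{lex}} \lambda_0$, so maximality of $\lambda_0$ forces $Tv = 0$ and hence $T^n v = YT^n v = 0$ for all $n \ge 1$; the spanning set reduces to $\{v, Yv\}$, contradicting $\dim V \ge 3$.

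Part (c) is the main obstacle. The $Z$-eigenvalues appearing on the spanning set of (b) form two arithmetic progressions of common difference $4$: the chain $\lambda + 4\Z$ (on $T^n v$ and $Y T^m X v$) and the chain $2 - \lambda + 4\Z$ (on $T^n X v$ and $Y T^m v$). Within one chain the generators carry pairwise distinct $Z$-eigenvalues, so the potentially troublesome case is a collision between the two chains, which occurs precisely when $\lambda \in 1 + 2\Z$. In that case, pure eigenvalue counting allows multiplicity as large as $2$, and to push it down to $1$ one must use simplicity of $V$ in an essential way---either by producing an explicit linear dependence between the two colliding generators using the relations~\eqref{relationsFU} and the scalar values of $X^2$ and $Y^2$ on the common eigenspace, or by showing that excess dimension in a single $Z$-eigenspace generates a proper $U$-submodule. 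This is the step that will require the most care.
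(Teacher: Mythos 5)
Your parts (a), (b) and (d) are correct and follow essentially the same route as the paper: (a) is the same direct computation from the relations $XZ+ZX=2X$, $YZ+ZY=-2Y$, $X^2-Y^2=Z$; for (b) the paper argues that $V$ is spanned by the vectors $m(X,Y)\,v$ with $m$ a monomial in $X,Y$ and that all occurrences of $X^2$ and $Y^2$ may be replaced by scalars, which is the same reduction you package as showing that the span $W$ of the candidates is a submodule; and your argument for (d) (if $Xv=0$ then $X(Yv)=0$ by maximality of $\lambda_0$, so the spanning set collapses to $\{v,Yv\}$) is the paper's argument in a slightly different dress.

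Part (c), however, is not proved. You correctly reduce the problem to a possible collision between the two arithmetic chains $\lambda+4\Z$ and $2-\lambda+4\Z$ (so that the multiplicity is at most $2$, and can exceed $1$ only when $\lambda$ is an odd integer), but you then stop at naming two candidate strategies without carrying either out, and neither, as stated, closes the gap. The missing idea in the paper is the following. Choose the base vector $v$ of (b) inside the eigenspace for the eigenvalue $\lambda$ under consideration; then the only other basis vector of (b) with $Z$-eigenvalue $\lambda$ is $w=YT^nv$ (when $\lambda=-1-2n$) or $T^nXv$ (when $\lambda=2n+1$), and these two cases are mutually exclusive. In the first case one checks that $X^2w=aw$, i.e.\ $w$ has the \emph{same} $X^2$-eigenvalue as $v$, and that $YT^nw=\beta v$ for some scalar $\beta$ (reducing $T^kw$ step by step shows it is a multiple of $YT^{n-k}v$, so $T^nw$ is a multiple of $Yv$ and $YT^nw$ a multiple of $Y^2v$, hence of $v$). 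One then chooses $\alpha$ with $\alpha^2=\beta$ and replaces $v$ by $v'=\alpha v+w$, which is still a simultaneous eigenvector for $Z$ and $X^2$ with the same eigenvalues; rebuilding the basis of (b) from $v'$, the only candidates in the $\lambda$-eigenspace are $v'$ and $YT^nv'=\alpha w+\beta v=\alpha v'$, so that eigenspace is one-dimensional. This quadratic change of base vector is the essential point of (c), and it is absent from your proposal; without it (or a genuine substitute) the classification in Theorem~\ref{classification}, which leans on (c) repeatedly, is not supported.
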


In (d) we order the complex numbers lexicographically, that is, we
identify $\C$ with $\R \times \R$, and set $(x,y) < (x', y')$ if and
only if $x < x'$, or $x=x'$ with $y < y'$. All the complex numbers
involved will turn out to be integers in the sequel!

\begin{proof}
(a) This follows from the relations in Proposition~\ref{prop-sl2F}.

(b) If $v$ is any non-zero vector, 
then $V$ has a basis consisting of elements of the form~$m(X,Y) \, v$, 
where $m(X,Y)$ is a monomial in~$X$ and~$Y$. 
Now, we can certainly find a vector~$v$ that is an eigenvector both for~$Z$ and~$X^2$, 
since by~(a) the operator~$X^2$ preserves the eigenspaces of~$Z$.
Another application of~(a) shows that $m(X,Y) \, v$ is again an
eigenvector for~$Z$ and~$X^2$, and so also for~$Y^2$ 
since $X^2 - Y^2 = Z$; this proves the last statement in~(b). 
Moreover, all occurrences of~$X^2$ or~$Y^2$ can clearly be removed from~$m(X, Y)$, 
so $V$ has a basis as announced.

(c) Let $\lambda $ be an eigenvalue of $Z$, and let $v$ be as in~(b),
satisfying $Zv = \lambda v$ and~$X^2v=av$. 
If a vector (other than~$v$ itself) is of the form $(Y^e T^n X^f) \, v$ 
and is an eigenvector for~$Z$ with eigenvalue~$\lambda $, 
then it must be either~$YT^n v$ in which case $\lambda = -1 -2n$, 
or~$T^n Xv$ in which case $\lambda = 2n+1$
(this is a simple verification from~(a)). 
Since $\lambda $ is either positive or negative, these possibilities are mutually exclusive: 
let us finish the proof in the case $\lambda = -1 -2n$, leaving the other case to the reader. 
We must prove that $w= YT^n v$ is a multiple of~$v$. Assume it is not. 
A straightforward but lengthy calculation shows that $X^2 w = a w$. 
Moreover, $YT^n w$ is a multiple of~$v$, say $YT^n w = \beta v$, as one sees readily. 
Choose $\alpha $ such that $\beta = \alpha^2$, and put $v' = \alpha v + w$. 
Now run through the proof with~$v'$ replacing~$v$. This time we do have
\[
YT^n v' = \alpha w + \beta v = \alpha v' \, .  
\]
So the eigenspace for the eigenvalue~$\lambda$ is one-dimensional. 

(d) Assume that $Xv = 0$, and let $w= Yv$. 
One has $Zw = -(\lambda_0 + 2) w$ by~(a); 
on the other hand $Xw = 0$, for otherwise by~(a) it
would be an eigenvector associated to $\lambda_0 + 4 > \lambda_0$; 
and finally $Yw$ is a multiple of~$v$ by~(c) 
since it satisfies $Z(Yw) = \lambda_0 Yw$. 
It follows that $v$ and $w$ generate a $U$-submodule of dimension~$\le 2$,
which is equal to~$V$ in view of the simplicity of the latter.
\end{proof}

As preliminaries for the proof of Theorem~\ref{classification}, we
need some notation. 
Let $V$ be a simple $U$-module. 
For any complex number~$\lambda$, we define a vector $e_\lambda \in V$ as follows: 
if $\lambda $ is an eigenvalue for~$Z$, we pick an associated eigenvector~$e_\lambda $; 
otherwise, we set $e_\lambda = 0$. 
By Proposition~\ref{prop-omnibus}\,(c) each vector~$e_\lambda $ is uniquely defined up to a scalar.

Next, we define complex numbers~$X(\lambda)$ and~$Y(\lambda )$ as follows. 
By Propo\-si\-tion~\ref{prop-omnibus}\,(a), the vector $X e_\lambda $ is either~$0$, 
in which case we set $X(\lambda ) =0$, or a multiple of $e_{2-\lambda }$, 
in which case we choose $X(\lambda)$ so that
\[
X e_\lambda = X( \lambda )\, e_{2- \lambda } \, .  
\]
Note that this relation is then true for all $\lambda \in  \C$. Similarly, we define $Y( \lambda )$ so that
\[
Y e_\lambda = Y( \lambda )  \, e_{-2-\lambda } \, . 
\]

From the relation $X^2 - Y^2 = Z$, we easily draw the following lemma.

\begin{lem}\label{lem-base}
If $e_\lambda \ne 0$, then
\[
X( \lambda ) \, X( 2 - \lambda ) - Y( \lambda ) \, Y(-2-\lambda ) = \lambda \, .   
\]
\end{lem}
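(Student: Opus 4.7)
The proof is a direct computation applying the defining relation $X^2 - Y^2 = Z$ (from Proposition~\ref{prop-sl2F}) to the vector $e_\lambda$, and then reading off the scalar on both sides.

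First I would compute $X^2 e_\lambda$. By the very definition of $X(\lambda)$, one has $X e_\lambda = X(\lambda)\, e_{2-\lambda}$, so applying $X$ again and using the defining relation for $X(2-\lambda)$ yields
\[
X^2 e_\lambda = X(\lambda)\, X e_{2-\lambda} = X(\lambda)\, X(2-\lambda)\, e_{2-(2-\lambda)} = X(\lambda)\, X(2-\lambda)\, e_\lambda.
\]
Entirely analogously, $Y^2 e_\lambda = Y(\lambda)\, Y(-2-\lambda)\, e_\lambda$, using that $-2-(-2-\lambda) = \lambda$.

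Next, since by construction $Z e_\lambda = \lambda\, e_\lambda$ and the relation $X^2 - Y^2 = Z$ holds in $U$, I would apply both sides to $e_\lambda$ to obtain
\[
\bigl(X(\lambda)\, X(2-\lambda) - Y(\lambda)\, Y(-2-\lambda)\bigr)\, e_\lambda = \lambda\, e_\lambda.
\]
Using the hypothesis $e_\lambda \ne 0$, I would then cancel it to conclude.

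The one subtlety worth checking is what happens when, say, $2 - \lambda$ is not an eigenvalue of $Z$, so that $e_{2-\lambda} = 0$ by convention. In that case Proposition~\ref{prop-omnibus}\,(a) forces $X e_\lambda = 0$, which is consistent with setting $X(\lambda) = 0$, and both sides of the relation $X e_\lambda = X(\lambda)\, e_{2-\lambda}$ are zero; the chain of equalities above then collapses to $X^2 e_\lambda = 0 = X(\lambda)\,X(2-\lambda)\, e_\lambda$, so nothing goes wrong. The same remark applies to $Y$. Hence there is no genuine obstacle; the lemma really is a one-line consequence of the cubic relation once the notation is unpacked.
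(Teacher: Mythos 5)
Your proof is correct and is exactly the argument the paper intends: the paper gives no details, simply stating that the identity is "easily drawn" from the relation $X^2 - Y^2 = Z$, and your computation of $X^2 e_\lambda$, $Y^2 e_\lambda$ and $Z e_\lambda$ followed by cancelling the nonzero vector $e_\lambda$ is that argument spelled out. The remark about the case $e_{2-\lambda}=0$ is a sensible check and matches the paper's convention that $Xe_\lambda = X(\lambda)\,e_{2-\lambda}$ holds for all $\lambda \in \C$.
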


Let $\lambda_0$ be the highest eigenvalue of~$Z$ (lexicographically). 
The central idea is to pay attention to the complex numbers $c_n =
Y(4n-2-\lambda_0) \, Y( \lambda_0 - 4n)$ (they will turn out to be
integers). 

\begin{lem}\label{lem-recurrence}
Let $\lambda = \lambda_0 - 4n$ for some integer $n\ge 0$. If

\begin{itemize}
\item[(i)]
$e_\lambda \ne 0$ and 

\item[(ii)]
$e_{2-\lambda } \ne 0$, 
\end{itemize}
then $c_{n+1} = c_n + 2 \lambda_0 - 8n -2$.
\end{lem}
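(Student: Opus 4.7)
The plan is to apply Lemma \ref{lem-base} twice: once at $\lambda = \lambda_0 - 4n$ and once at $\mu = 2 - \lambda = 2 - \lambda_0 + 4n$. The hypotheses (i) and (ii) are exactly what is needed for each of these applications to be valid. The key observation is that both applications involve the same scalar product $X(\lambda_0 - 4n) \, X(2 - \lambda_0 + 4n)$, but express it in two different ways; equating them will yield the recurrence.

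More precisely, the first application with $e_\lambda \ne 0$ gives
\[
X(\lambda_0 - 4n) \, X(2 - \lambda_0 + 4n) \;=\; Y(\lambda_0 - 4n)\,Y(-2 - \lambda_0 + 4n) + (\lambda_0 - 4n)\, .
\]
Since $Y(-2-\lambda_0+4n) = Y(4n-2-\lambda_0)$, the product on the right equals $c_n$, so the right-hand side simplifies to $c_n + \lambda_0 - 4n$.

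The second application, at $\mu = 2 - \lambda_0 + 4n$, uses $e_\mu = e_{2-\lambda} \ne 0$ from hypothesis (ii) and yields
\[
X(2-\lambda_0 + 4n)\, X(\lambda_0 - 4n) \;=\; Y(2-\lambda_0+4n)\,Y(\lambda_0 - 4n - 4) + (2 - \lambda_0 + 4n)\, .
\]
Here I would notice that $Y(2-\lambda_0+4n) = Y(4(n+1)-2-\lambda_0)$ and $Y(\lambda_0 - 4n - 4) = Y(\lambda_0 - 4(n+1))$, so the product on the right equals $c_{n+1}$, giving $c_{n+1} + 2 - \lambda_0 + 4n$.

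Because the $X(\cdot)$ are complex numbers, the two left-hand sides coincide, so equating the right-hand sides gives
\[
c_n + \lambda_0 - 4n \;=\; c_{n+1} + 2 - \lambda_0 + 4n\, ,
\]
which rearranges to the desired $c_{n+1} = c_n + 2\lambda_0 - 8n - 2$. I do not expect a real obstacle here: the substance is entirely in identifying the two $Y(\cdot)Y(\cdot)$ products as $c_n$ and $c_{n+1}$ respectively, and the algebraic bookkeeping is routine once this is noticed.
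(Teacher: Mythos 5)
Your proof is correct: applying Lemma~\ref{lem-base} at $\lambda=\lambda_0-4n$ and again at $2-\lambda$, recognizing the two $Y(\cdot)Y(\cdot)$ products as $c_n$ and $c_{n+1}$, and equating the common product $X(\lambda)X(2-\lambda)$ gives exactly the stated recurrence. The paper states this lemma without proof, and your argument is evidently the intended one.
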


From Proposition~\ref{prop-omnibus}\,(c) we see that Lemma~\ref{lem-recurrence} 
applies at least once, namely for~$n=0$. 
From now on, the number~$n$ is chosen to be the smallest integer such that 
this lemma cannot be applied, because either~(i) or~(ii) does not hold. 
Clearly, $n$ is finite, as $V$ is finite-dimensional.

By an immediate induction, we see that for $0 \le k \le n$,
\begin{equation}\label{eq-ck}
c_k = c_1 + 2(k-1)(\lambda_0 -2k-1) \, .
\end{equation}

It is in fact easy to compute~$c_1$: there are two cases to consider.

\smallskip
\emph{Case (1).} 
Assume $Y( \lambda_0 ) = 0$. From Lemma~\ref{lem-base} with $\lambda = \lambda_0$, 
we have $X( \lambda_0 ) \, X( 2- \lambda_0 ) = \lambda_0$. 
Applying the same lemma with $\lambda = 2 - \lambda_0$
(which we may by Proposition~\ref{prop-omnibus}\,(c)), 
we obtain $c_1 = 2 (\lambda_0 - 1)$. It follows that
\begin{equation}\label{eq-cn-case1}
 c_n = 2(n \lambda_0 - (n-1)(2n+1) -1) \, . 
\end{equation}

\emph{Case (2).} Assuming $Y( \lambda_0 ) \ne 0$, we can apply
Lemma~\ref{lem-base} with $\lambda = -2 - \lambda_0$. 
Keeping in mind that $X( \lambda_0 + 4 ) = 0$ by maximality of~$\lambda_0$, 
we draw $c_0 = 2 + \lambda_0$, so that $c_1 = 3 \lambda_0$ and 
\begin{equation}\label{eq-cn-case2}
c_n = (2n+1) \lambda_0 -2(n-1)(2n+1) \, . 
\end{equation}
This completes the preliminaries. 

\pagebreak[3]

\medskip\noindent
\emph{Proof of Theorem~\ref{classification}}.
It is divided in two cases, according to which 
hypothesis of Lemma~\ref{lem-recurrence} fails to hold for~$n$ 
(see above the definition of this integer).

\smallskip
\emph{Case (a): Hypothesis}\,(i) \emph{holds, but} (ii) \emph{does not.} 
In other words, $e_\lambda \ne 0$, but $e_{2-\lambda } =0$ for $\lambda = \lambda_0 - 4n$. 
Thus we certainly have $X(2- \lambda ) =0$, and Lemma~\ref{lem-base} for this $\lambda $ 
gives $c_n = - \lambda = 4n - \lambda_0$. 
From this we deduce the value of~$\lambda_0$ as follows.

\begin{itemize}

\item 
\emph{Subcase (a) \& (1).} 
Combining~\eqref{eq-cn-case1} with the equation $c_n = 4n - \lambda_0$ just obtained, 
we see that $\lambda_0 = 2n$. 
Let us prove that $V$ is isomorphic to~$E_n$ in this case. 
Let $\lambda_k = \lambda_0 - 4k$, and let $v_0 =  e_{\lambda_0}$. 
When $0 \le k < n$, we can apply Lemma~\ref{lem-base} with $\lambda = \lambda_k$, 
and obtain
\[
X( \lambda_k ) \, X( 2 - \lambda_k ) = c_k + \lambda_k \ne 0 \, .  
\]
The fact that $c_k + \lambda_k \ne 0$ follows from~\eqref{eq-ck}, and 
the same equation shows that $c_k \ne 0$ for these values of~$k$.

In particular, we have $X( \lambda_k ) \ne 0$ and $Y( \lambda_k ) \ne 0$ for $0 \le k < n$. 
Thus we may set $w_0 = Xv_0$, $v_1 = Yw_0$, then
$w_1 = Xv_1$, $v_2=Yw_1$, and so on until $w_{n-1}= X v_{n-1}$, $v_n = Yw_{n-1}$; 
all these vectors are non-zero. 
By Proposition~\ref{prop-omnibus}\,(a), they are eigenvectors for different eigenvalues of~$Z$, 
and as such they are linearly independent.

It is a consequence of the definitions that
\[
Zv_k = (2n- 4k) \, v_k \, , \quad Zw_k = (4k+2 - 2n) \, w_k \, ,   
\]
\[
Xv_k = w_k \, , \quad Xw_k = \alpha_k v_k \, ,   
\]
\[
Yv_k = \beta_k w_{k-1} \, , \quad Yw_k =  v_{k+1}  
\]
for $0 \le k \le n$ with the convention $w_n = w_{-1} = 0$: 
indeed, $Xv_n = 0$ since we are in Case\,(a), and $Y v_0 = 0$ 
since we are in Case\,(1).

As a result, the elements $v_k$ and $w_k$ generate a $U$-submodule of~$V$, 
hence all of~$V$ since the latter is simple. It remains only to compute the
value of the scalars~$\alpha_k$ and~$\beta_k$.

We may assume that $\beta_0=0$. Checking the relation $X^2 - Y^2 = Z$
against~$v_i$ gives $\alpha_0 = 2n$ for $i=0$, and 
\[
\alpha_i - \beta_i = 2n-4i \quad\textnormal{for}\; i \ge 1 \, .   
\]
Checking the same relation against $w_i$ yields 
\[
\alpha_i - \beta_{i+1} = 4i +2 -2n \quad\textnormal{for}\; i \ge 0 \, .  
\]
Comparing the two equations gives $\beta_{i+1}- \beta_i = 4n-8i-2$ for $i\ge 1$; 
now sum this for $i$ between~$1$ and~$k-1$, use $\beta_1 = 4n-2$, 
and obtain $\beta_k = k(4n+2-4k)$. Then solve for~$\alpha_k$.

\item 
\emph{Subcase (a) \& (2).} This is left to the reader. 
One finds $\lambda_0 = 2n - 1$, and $V$ is isomorphic to~$B_n^\pm$.

\end{itemize}

\emph{Case\,(b): Hypothesis}\,(i) \emph{does not hold.} 
In other words, $e_{\lambda_0 -4n}=0$. Thus,
\[
Y( \lambda_0 - 4n ) = 0 = c_n \, .
\] 
One can easily solve this for~$\lambda_0$: 

\begin{itemize}

\item \emph{Subcase (b) \& (1).}  One finds $\lambda_0 = 2n - 1$. It
  follows that $V$ is isomorphic to~$A_n^\pm$ (details are left to the
  reader).

\item \emph{Subcase (b) \& (2).} 
Let us show that this case does not occur. 
Indeed, from~\eqref{eq-cn-case2} we see that $\lambda_0 = 2n - 2$. 
On the other hand, by definition of~$n$, and since we are in Case\,(b), 
we know that $e_{2 - \lambda}\ne 0$ for $\lambda = \lambda_0 -4(n-1) = -2n+2$. 
This produces an eigenvector for~$Z$ with eigenvalue $2 - \lambda = 2n$,
 which contradicts the maximality of $\lambda_0$.
\hfill\qed
\end{itemize}

\begin{Rem}
It can be proved that the category of finite-dimensional $U_F(\slt)$-modules is not semisimple 
though that of~$U(\slt)$ is.
\end{Rem}

\appendix

\section{Versal extensions for Galois objects}

We apply the techniques of the paper to discuss an issue raised by
Eli Aljadeff and the second-named author in~\cite{AK}. 

The issue is one of rationality.  Let $k$ be a base field, $H$ a Hopf
algebra over~$k$, and let~$A$ be a cleft, left Galois object of~$H$
(recall the definition from
Section~\ref{subsec-two-cocycles-on-Hopf-algebras}).  If~$K$ is any
field containing~$k$, a \emph{form of~$A$ over~$K$} is a cleft
Hopf-Galois extension~$A'$ of~$K$, with structure Hopf
algebra~$H\otimes_k K$, such that $A'\otimes_K \bar K$ is isomorphic
to $A \otimes_k \bar K$, where~$\bar K$ is the algebraic closure
of~$K$.  The question essentially is: can one classify the forms
of~$A$ over an arbitrary field~$K$?  In fact we shall refine this
question below.

Before we do that, let us make a few comments. By definition, we know
that $A$ (resp.\ $A'$) is of the form ${}_\sigma H$ (resp.\ ${}_\tau
H$).  The condition for~$A'$ to be a form of~$A$ is precisely that
$\tau$ should be cohomologous to~$\sigma $ over~$\bar K$.  Since there
are non-trivial two-cocycles even over algebraically closed fields,
this condition is not vacuous. What is more, there are many forms
of~$A$ even if $\sigma $ is the trivial two-cocycle.

Let us recall the ``classical case''.  Say $H= \OO_k(G)$ for an
algebraic group $G$, and restrict attention to \emph{commutative}
algebras~$A$; in this situation, a cleft Galois object of~$H$ which is
also commutative is precisely the algebra of functions on a $G$-torsor
(this is well known, and the proof would take us too far afield).
Over an algebraically closed field there is only the trivial
$G$-torsor, and thus the only forms to study are those of the trivial
torsor.

In algebraic geometry, a very useful device for studying torsors is
afforded by \emph{versal extensions}. 
Such an extension is by definition a map of varieties $U\to B$, 
where $G$ acts freely on~$U$ and $B= U/G$, 
with the following two properties:

\begin{itemize}
\item[(i)] 
any $G$-torsor~$T$ is obtained as a fibre of this map along a map $\Spec(K) \to B$;

\item[(ii)]
any such fibre is a $G$-torsor.
\end{itemize}

When the varieties are affine, we may let~$\U$ (resp.\ $\B$) be the algebra of functions on~$U$
(resp.\ on~$B$), and we see that any $G$-torsor~$T$ is of the form
$\Spec(A)$ for $A = \U \otimes_\B K$; conversely, any such~$A$ defines a torsor.

It is an easy theorem that versal extensions always exist, and
for a finite group~$G$ we may construct such objects as follows: 
start with an embedding $\rho : G \to GL(V)$, and define~$U$ to be the complement in~$V$ 
of the subspaces $\ker( \rho( g ) - \id)$ for $g\in G$. 
The map $U \to B=U/G$ is a versal extension (see~\cite[I, Sect.~5]{GMS}). 
We can even arrange for~$U$ to be affine if needed: 
take a finite collection $(P_i)_i$ of hyperplanes in~$V$ such that each $\ker( \rho( g ) - \id)$ 
is contained in some~$P_i$, and such that $S =\cup_i\, P_i$ is $G$-invariant; 
then the complement $U= V - S$ is affine and $U \to B= U/G$ is again versal.

\begin{thm}
Let $H = \OO_k(G)$ for some finite group~$G$ and some field~$k$.  
Let~$\sigma $ be a lazy two-cocycle for~$H$.
If $\B \subset \U$ is a classical versal extension as above, then
${}_\sigma \B \subset {}_\sigma \U$ is a versal extension for forms of~${}_\sigma H$.
\end{thm}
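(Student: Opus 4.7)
The plan is to transport the classical versal property through the twisting functor~$A \mapsto {}_\sigma A$, using two features of the lazy setting: first, this functor is an autoequivalence of the category of $H$-comodule algebras, inverted by $A' \mapsto {}_{\sigma^{-1}} A'$ via~\eqref{twist-twice}; second, since~$\sigma$ is defined over~$k$, twisting commutes with any scalar extension $k \subset K \subset \bar K$. Consequently $A' \mapsto {}_{\sigma^{-1}} A'$ carries a form of~${}_\sigma H$ over~$K$ to a form of~$H$ over~$K$, that is, to a classical (commutative) $G$-torsor. The classical versal property of $\B \subset \U$ presents such a torsor as $\U \otimes_\B K$ for a unique $K$-point $\phi \colon \B \to K$, and twisting back yields $A' \cong {}_\sigma(\U \otimes_\B K)$.

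The heart of the proof is to identify this last algebra with the fibre $({}_\sigma \U) \otimes_{{}_\sigma \B} K$. Since $\B = \U^G$ carries the trivial $H$-coaction, one reads off from~\eqref{twistedproduct-of-A} that ${}_\sigma \B \cong \B$ as $k$-algebras (explicitly, via $b \mapsto \sigma(1,1)^{-1} b$), and similarly ${}_\sigma K \cong K$; hence the $K$-points of~${}_\sigma \B$ correspond bijectively to those of~$\B$. What remains is the base-change identity
\[
{}_\sigma \bigl(\U \otimes_\B K\bigr) \;\cong\; ({}_\sigma \U) \otimes_{{}_\sigma \B} K \, .
\]
My preferred route is through the naive description of Remark~\ref{rmk-naive-def}: both sides unwind to the $G$-invariants $\bigl({}_\sigma \OO \otimes \U \otimes_\B K\bigr)^G$, and the equality reduces to the statement that the formation of $G$-invariants commutes with the base change from~$\B$ to~$K$. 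This rests on the classical fact that $\U$ is faithfully flat (indeed, étale-Galois) over~$\B$. An alternative, going through Remark~\ref{rmk-alternative-schauenburg}, expresses both sides as cotensor products with~${}_\sigma \OO$ over~$H$ and reduces to the analogous associativity statement, which has the same flat-base-change content.

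Granting the base-change identity, both defining properties of a versal extension follow at once. Every form of~${}_\sigma H$ over~$K$ is a fibre of ${}_\sigma \U \to {}_\sigma \B$, by the bijection of the first paragraph; conversely, any fibre $({}_\sigma \U) \otimes_{{}_\sigma \B} K$ is the twist of the classical torsor $\U \otimes_\B K$ and therefore becomes isomorphic to~${}_\sigma H$ over~$\bar K$. The main obstacle is the base-change identity displayed above; once it is in hand, the rest is formal bookkeeping through the twisting dictionary.
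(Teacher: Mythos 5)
Your argument is essentially the paper's own: untwist a form $A$ of ${}_\sigma H$ by $\sigma^{-1}$ to obtain a classical torsor, invoke the classical versal property to write it as $\U \otimes_\B K$, twist back, and identify ${}_\sigma(\U \otimes_\B K)$ with ${}_\sigma\U \otimes_\B K$ --- a step the paper dismisses as ``an exercise'' and which you justify (somewhat more heavily than necessary, via $G$-invariants and flat base change, where a direct check of formula~\eqref{twistedproduct-of-A} using the triviality of the coaction on $\B$ and $K$ suffices). The only blemish is the word ``unique'' for the $K$-point $\phi$: versality provides existence, not uniqueness, but none is needed.
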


We caution that~${}_\sigma H$ is an~$(H,H)$-bicomodule, and the
theorem refers to the underlying \emph{left} comodule. Also note that,
since the $G$-action on~$\B$ is trivial, there is a natural
identification of~${}_\sigma \B$ with~$\B$.

\begin{proof}
Let $K$ be a field containing~$k$ and let~$A$ be a form of~${}_\sigma H$ over~$K$.  
The~$H$-comodule algebra~${}_{\sigma^{-1}} A$ is a form of~$H$, 
and as such it is a Hopf-Galois extension of~$K$. 
The cleft condition is automatically satisfied by finite-dimensionality of the Hopf algebra~$H$.  
By definition of the classical versal extension, ${}_{\sigma ^{-1}} A$ is 
of the form~$\U \otimes_\B K$ for some algebra morphism $\eta : \B \to K$.  
As a result, $A$ is isomorphic to~${}_\sigma ( \U \otimes_\B K )$.

To complete the proof, it thus suffices to establish that ${}_\sigma( \U \otimes_\B K)$ 
can be identified with ${}_\sigma \U \otimes_{\B} K$ 
(keeping in mind that ${}_\sigma \B = \B$, ${}_\sigma K = K$ and ${}_\sigma \eta = \eta$). 
This is an exercise.
Moreover, ${}_\sigma \U \otimes_{\B} K$ is now clearly seen to be a
form of~${}_\sigma H$ for any algebra morphism $\eta: \B\to K$.
\end{proof}

\begin{Rem}
Aljadeff and the second-named author~\cite{AK} attached to any Hopf algebra~$H$ 
and any two-cocycle~$\sigma$ an $H$-Galois extension $\B_{H}^{\sigma} \subset \AA_{H}^{\sigma}$.
It turns out that this extension is a versal extension classifying all forms of~${}_{\sigma} H$
for a large class of Hopf algebras~$H$. 
By~\cite[Th.~3.6]{KM}, this class includes all finite-dimensional Hopf algebras.

When $H = \OO_k(G)$ for a finite group~$G$ and $\sigma = \eps$ is the trivial cocycle, then 
by~\cite{AK, KM}, the versal extension $\B_{H}^{\eps} \subset \AA_{H}^{\eps}$ has the following
description:
\[
\AA_{H}^{\eps} = k[\, t_g \, |\, g\in G \, ] \Bigl[\frac{1}{\Theta_G} \Bigr] \, ,
\]
where $\Theta_G = \det(t_{gh^{-1}})_{g,h \in G}$ is \emph{Dedekind's group determinant},
and
\[
\B_{H}^{\eps} = (\AA_{H}^{\sigma})^G
= k[\, t_g \, |\, g\in G \, ]{\,}^G \Bigl[\frac{1}{\Theta_G^2} \Bigr] \, ,
\]
where $G$ acts on the indeterminates~$t_g$ by translation of the indices.
These algebras are closely related to the classical versal extension $\B \subset \U$ 
described above in the case one uses the regular representation of~$G$. 

If $\sigma$ is a general lazy two-cocycle of~$\OO_k(G)$, then $\B_{H}^{\sigma} = \B_{H}^{\eps}$
as before and $\AA_{H}^{\sigma}$ is obtained from~$\AA_{H}^{\eps}$ 
as stated in~\cite[Prop.~2.3]{KM}. 
It can be checked that the Galois extension $\B_{H}^{\sigma} \subset \AA_{H}^{\sigma}$
is more or less the twisted versal extension ${}_\sigma  \B \subset {}_\sigma \U$ 
of the previous classical~$\B \subset \U$.
\end{Rem}

\section*{Acknowledgements}

The present joint work is part of the project ANR-07-BLAN-0229
``Groupes quantiques~: techniques galoisiennes et d'int\'egration" funded
by Agence Nationale de la Recherche, France.
We thank the referee for useful comments.


\begin{thebibliography}{99}

\bibitem{AEGN} E.~Aljadeff, P.~Etingof, S.~Gelaki, D.~Nikshych,
\textit{On twisting of finite-dimensional Hopf algebras},
J.\ Algebra 256 (2002), 484--501.

\bibitem{AK} E.~Aljadeff, C.~Kassel,
\textit{Polynomial identities and noncommutative versal torsors},
Adv.\ Math.\ 218 (2008), 1453--1495.

\bibitem{BC} J.~Bichon, G.~Carnovale,
\textit{Lazy cohomology: 
an analogue of the Schur multiplier for arbitrary Hopf algebras},
J.~Pure Appl.\ Algebra 204 (2006), 627--665.

\bibitem{borel} A.~Borel,
\textit{Linear algebraic groups},
Second edition, Grad.\ Texts in Math.,~126, Springer-Verlag, New York,~1991. 

\bibitem{doi} Y.~Doi,
\textit{Braided bialgebras and quadratic bialgebras},
Comm.\ Algebra 21, no.~5 (1993), 1731--1749.

\bibitem{EG} P.~Etingof, S.~Gelaki, 
\textit{The classification of triangular semisimple and cosemisimple Hopf algebras 
over an algebraically closed field},
Internat.\ Math.\ Res.\ Notices 2000,  no.~5 (2000), 223--234.

\bibitem{GMS} S.~Garibaldi, A.~Merkurjev, J.-P.~Serre, 
\textit{Cohomological invariants in Galois cohomology},
University Lecture Series, 28. Amer.\ Math.\ Soc., Providence, RI, 2003.

\bibitem{GK} P.~Guillot, C.~Kassel, 
\textit{Cohomology of invariant Drinfeld twists on group algebras}, 
Int.\ Math.\ Res.\ Not.\ 2010 (2010), 1894--1939.

\bibitem{Gur} D. I.~Gurevich,
\textit{Algebraic aspects of the quantum Yang-Baxter equation},
Algebra i Analiz~2, no.~4 (1990), 119--148; 
English translation: Leningrad Math.\ J.~2, no.~4 (1991), 801--828.

\bibitem{Ks} C.~Kassel, 
\textit{Quantum groups},
Grad.\ Texts in Math.,~155, Springer-Verlag, New York,~1995. 

\bibitem{KM} C.~Kassel, A.~Masuoka,
\textit{Flatness and freeness properties of the generic Hopf Galois extensions},
Rev.\ Un.\ Mat.\ Argentina 51:1 (2010), 79--94.

\bibitem{Maj} S.~Majid, 
\textit{Braided groups and algebraic quantum field theories},
Lett.\ Math.\ Phys.\ 22 (1991), no.~3, 167--175. 

\bibitem{Maj2} S.~Majid, 
\textit{Algebras and Hopf algebras in braided categories},
Advances in Hopf algebras (Chicago, IL,~1992), 55--105,
Lecture Notes in Pure and Appl.\ Math., 158, Dekker, New York, 1994. 

\bibitem{Mon} S.~Montgomery, 
\textit{Hopf algebras and their actions on rings},
CBMS Conf.\ Series in Math., vol.~82, Amer.\ Math.\ Soc., Providence, RI, 1993.

\bibitem{Mon2} S.~Montgomery, 
\textit{Algebra properties invariant under twisting}, in ``Hopf algebras in noncommutative geometry and physics",  
229--243, Lecture Notes in Pure and Appl.\ Math.,~239, Dekker, New York,~2005. 

\bibitem{schauen} P.~Schauenburg, 
\textit{Hopf bi-Galois extensions},
Comm. Algebra 24 (1996), 3797--3825.

\bibitem{Se} J.-P.\ Serre, 
\textit{Cohomologie galoisienne}, 
Lecture Notes in Mathematics,~5, Springer-Verlag, Berlin-Heidelberg-New York,~1962/1963;
English translation: \textit{Galois cohomology}, 
Springer Monographs in Mathematics, Springer-Verlag, Berlin, 2002.

\bibitem{Sk} S.~Skryabin,
\textit{Structure of $H$-semiprime artinian algebras},
Algebr.\ Represent.\ Theory 13 (2010); doi:10.1007/s10468-010-9216-8.

\bibitem{Sw} M.~Sweedler, 
\textit{Hopf algebras}, 
W. A. Benjamin, Inc., New York,~1969.

\bibitem{Ta} L. A. Takhtadzhyan, 
\textit{The noncommutative homology of quantum tori},
Funktsional.\ Anal.\ i Prilozhen.\ 23 (1989), 75--76;  
English translation in Funct.\ Anal.\ Appl.\ 23 (1989), 147--149.

\bibitem{Wa} M. Wambst,
\textit{Complexes de Koszul quantiques},
Ann.\ Inst.\ Fourier (Grenoble) 43 (1993), 1089--1156.




\end{thebibliography}
\end{document}